\begin{document}
\newcommand{\M}{{\mathcal M}}
\newcommand{\loc}{{\mathrm{loc}}}
\newcommand{\core}{C_0^{\infty}(\Omega)}
\newcommand{\sob}{W^{1,p}(\Omega)}
\newcommand{\sobloc}{W^{1,p}_{\mathrm{loc}}(\Omega)}
\newcommand{\merhav}{{\mathcal D}^{1,p}}
\newcommand{\be}{\begin{equation}}
\newcommand{\ee}{\end{equation}}
\newcommand{\mysection}[1]{\section{#1}\setcounter{equation}{0}}
\newcommand{\laplace}{\Delta}
\newcommand{\pl}{\laplace_p}
\newcommand{\grad}{\nabla}
\newcommand{\pd}{\partial}
\newcommand{\bo}{\pd}
\newcommand{\csub}{\subset \subset}
\newcommand{\sm}{\setminus}
\newcommand{\ssm}{:}
\newcommand{\diver}{\mathrm{div}\,}
\newcommand{\bea}{\begin{eqnarray}}
\newcommand{\eea}{\end{eqnarray}}
\newcommand{\bean}{\begin{eqnarray*}}
\newcommand{\eean}{\end{eqnarray*}}
\newcommand{\thkl}{\rule[-.5mm]{.3mm}{3mm}}
\newcommand{\cw}{\stackrel{\rightharpoonup}{\rightharpoonup}}
\newcommand{\id}{\operatorname{id}}
\newcommand{\supp}{\operatorname{supp}}
\newcommand{\wlim}{\mbox{ w-lim }}
\newcommand{\mymu}{{x_N^{-p_*}}}
\newcommand{\R}{{\mathbb R}}
\newcommand{\N}{{\mathbb N}}
\newcommand{\Z}{{\mathbb Z}}
\newcommand{\Q}{{\mathbb Q}}
\newcommand{\abs}[1]{\lvert#1\rvert}
\newtheorem{theorem}{Theorem}[section]
\newtheorem{corollary}[theorem]{Corollary}
\newtheorem{lemma}[theorem]{Lemma}
\newtheorem{notation}[theorem]{Notation}
\newtheorem{definition}[theorem]{Definition}
\newtheorem{remark}[theorem]{Remark}
\newtheorem{proposition}[theorem]{Proposition}
\newtheorem{assertion}[theorem]{Assertion}
\newtheorem{problem}[theorem]{Problem}
\newtheorem{conjecture}[theorem]{Conjecture}
\newtheorem{question}[theorem]{Question}
\newtheorem{example}[theorem]{Example}
\newtheorem{Thm}[theorem]{Theorem}
\newtheorem{Lem}[theorem]{Lemma}
\newtheorem{Pro}[theorem]{Proposition}
\newtheorem{Def}[theorem]{Definition}
\newtheorem{Exa}[theorem]{Example}
\newtheorem{Exs}[theorem]{Examples}
\newtheorem{Rems}[theorem]{Remarks}
\newtheorem{Rem}[theorem]{Remark}

\newtheorem{Cor}[theorem]{Corollary}
\newtheorem{Conj}[theorem]{Conjecture}
\newtheorem{Prob}[theorem]{Problem}
\newtheorem{Ques}[theorem]{Question}
\newtheorem*{corollary*}{Corollary}
\newtheorem*{theorem*}{Theorem}
\newcommand{\pf}{\noindent \mbox{{\bf Proof}: }}


\renewcommand{\theequation}{\thesection.\arabic{equation}}
\catcode`@=11 \@addtoreset{equation}{section} \catcode`@=12
\newcommand{\Real}{\mathbb{R}}
\newcommand{\real}{\mathbb{R}}
\newcommand{\Nat}{\mathbb{N}}
\newcommand{\ZZ}{\mathbb{Z}}
\newcommand{\CC}{\mathbb{C}}
\newcommand{\Pess}{\opname{Pess}}
\newcommand{\Proof}{\mbox{\noindent {\bf Proof} \hspace{2mm}}}
\newcommand{\mbinom}[2]{\left (\!\!{\renewcommand{\arraystretch}{0.5}
\mbox{$\begin{array}[c]{c}  #1\\ #2  \end{array}$}}\!\! \right )}
\newcommand{\brang}[1]{\langle #1 \rangle}
\newcommand{\vstrut}[1]{\rule{0mm}{#1mm}}
\newcommand{\rec}[1]{\frac{1}{#1}}
\newcommand{\set}[1]{\{#1\}}
\newcommand{\dist}[2]{$\mbox{\rm dist}\,(#1,#2)$}
\newcommand{\opname}[1]{\mbox{\rm #1}\,}
\newcommand{\mb}[1]{\;\mbox{ #1 }\;}
\newcommand{\undersym}[2]
 {{\renewcommand{\arraystretch}{0.5}  \mbox{$\begin{array}[t]{c}
 #1\\ #2  \end{array}$}}}
\newlength{\wex}  \newlength{\hex}
\newcommand{\understack}[3]{%
 \settowidth{\wex}{\mbox{$#3$}} \settoheight{\hex}{\mbox{$#1$}}
 \hspace{\wex}  \raisebox{-1.2\hex}{\makebox[-\wex][c]{$#2$}}
 \makebox[\wex][c]{$#1$}   }%
\newcommand{\smit}[1]{\mbox{\small \it #1}}
\newcommand{\lgit}[1]{\mbox{\large \it #1}}
\newcommand{\scts}[1]{\scriptstyle #1}
\newcommand{\scss}[1]{\scriptscriptstyle #1}
\newcommand{\txts}[1]{\textstyle #1}
\newcommand{\dsps}[1]{\displaystyle #1}
\newcommand{\dx}{\,\mathrm{d}x}
\newcommand{\dy}{\,\mathrm{d}y}
\newcommand{\dz}{\,\mathrm{d}z}
\newcommand{\dt}{\,\mathrm{d}t}
\newcommand{\dr}{\,\mathrm{d}r}
\newcommand{\du}{\,\mathrm{d}u}
\newcommand{\dv}{\,\mathrm{d}v}
\newcommand{\dV}{\,\mathrm{d}V}
\newcommand{\ds}{\,\mathrm{d}s}
\newcommand{\dS}{\,\mathrm{d}S}
\newcommand{\dk}{\,\mathrm{d}k}

\newcommand{\dphi}{\,\mathrm{d}\phi}
\newcommand{\dtau}{\,\mathrm{d}\tau}
\newcommand{\dxi}{\,\mathrm{d}\xi}
\newcommand{\deta}{\,\mathrm{d}\eta}
\newcommand{\dsigma}{\,\mathrm{d}\sigma}
\newcommand{\dtheta}{\,\mathrm{d}\theta}
\newcommand{\dnu}{\,\mathrm{d}\nu}

\def\ga{\alpha}     \def\gb{\beta}       \def\gg{\gamma}
\def\gc{\chi}       \def\gd{\delta}      \def\ge{\epsilon}
\def\gth{\theta}                         \def\vge{\varepsilon}
\def\gf{\phi}       \def\vgf{\varphi}    \def\gh{\eta}
\def\gi{\iota}      \def\gk{\kappa}      \def\gl{\lambda}
\def\gm{\mu}        \def\gn{\nu}         \def\gp{\pi}
\def\vgp{\varpi}    \def\gr{\rho}        \def\vgr{\varrho}
\def\gs{\sigma}     \def\vgs{\varsigma}  \def\gt{\tau}
\def\gu{\upsilon}   \def\gv{\vartheta}   \def\gw{\omega}
\def\gx{\xi}        \def\gy{\psi}        \def\gz{\zeta}
\def\Gg{\Gamma}     \def\Gd{\Delta}      \def\Gf{\Phi}
\def\Gth{\Theta}
\def\Gl{\Lambda}    \def\Gs{\Sigma}      \def\Gp{\Pi}
\def\Gw{\Omega}     \def\Gx{\Xi}         \def\Gy{\Psi}

\renewcommand{\div}{\mathrm{div}}
\newcommand{\red}[1]{{\color{red} #1}}


\title{Optimal $L^p$ Hardy-type inequalities}

\author{Baptiste Devyver}
\address{Baptiste Devyver, Department of Mathematics,  Technion - Israel Institute of Technology, Haifa 32000, Israel}
\email{baptiste.devyver@univ-nantes.fr}
\author{Yehuda Pinchover}
\address{Yehuda Pinchover,
Department of Mathematics, Technion - Israel Institute of
Technology,   Haifa 32000, Israel}
\email{pincho@techunix.technion.ac.il}

\maketitle

\begin{abstract}

Let $\Gw$ be a domain in $\R^n$ or a noncompact Riemannian manifold of dimension $n\geq 2$, and $1<p<\infty$. Consider the functional $\mathcal{Q}(\varphi):=\int_\Omega \big(|\nabla \varphi|^p+V|\varphi|^p\big)\dnu$ defined on $\core$, and assume that $\mathcal{Q}\geq 0$. The aim of the paper is to generalize to the quasilinear case ($p\neq 2$) some of the results obtained in \cite{DFP} for the linear case ($p=2$), and in particular, to obtain ``as large as possible" nonnegative (optimal) Hardy-type weight $W$ satisfying
$$\mathcal{Q}(\varphi)\geq \int_{\Omega} W|\varphi|^p\dnu \quad\forall \varphi\in\core.$$

Our main results deal with the case where $V=0$, and $\Gw$ is a general punctured domain (for $V\neq0$ we obtain only some partial results). In the case $1<p\leq  n$, an optimal Hardy-weight is given by  $$W:=\left(\frac{p-1}{p}\right)^p\left|\frac{\nabla G}{G}\right|^p,$$
where $G$ is the associated positive minimal Green function with a pole at $0$. On the other hand, for $p>n$, several cases should be considered, depending on the behavior of $G$ at infinity in $\Omega$. The results are extended to annular and exterior domains.
\end{abstract}

\mysection{Introduction}
In a recent paper \cite{DFP}, the authors studied a general second-order {\em linear} elliptic operator $P\geq 0$ in a general domain $\Gw\subset \mathbb{R}^n$ (or a noncompact smooth manifold of dimension $n$), where $n\geq 2$, and obtained an optimal improvement of the inequality $P\geq 0$. The improved inequality is of the form $P\geq W$, where $W$ is ``as large as possible" weight function, and (in the self-adjoint case) the inequality $P\geq W$ is meant in the quadratic form sense. The weight $W$ is given explicitly  using a simple construction called the {\em supersolution construction}; any two linearly independent positive (super)solutions $u_0,u_1$ of the equation $Pu=0$ give rise to a one-parameter family of Hardy-type weights $\{W_\ga\}_{\{0\leq \ga \leq 1\}}$ satisfying the inequality $P\geq W_\ga$ (for more details on this construction see Section~\ref{sec33}). The optimal weight is obtained by a careful choice of $u_0,u_1$ and $\ga$.

In the case of a Schr\"odinger type operator $P$, the main result of \cite{DFP} reads as follows.

\begin{theorem}\label{temp_def}
Consider a {\em symmetric} second-order linear elliptic operator $P$ of the form
$$Pu:=-\div\big(A(x)\nabla u\big)+V(x)u$$
which is subcritical in $\Gw$. Let $q$ be the associated quadratic form. Then there exists a nonzero, nonnegative weight $W$ satisfying the following properties:
\begin{enumerate}
\item[(a)] The following Hardy-type inequality holds true
 \begin{equation}
\label{HardyType_def}
q(\vgf) \geq \lambda \int_\Omega W(x) |\varphi(x)|^2 \dx \qquad \forall\varphi \in C_0^\infty(\Omega),
\end{equation}
with $\gl>0$. Denote by $\lambda_0:=\lambda_0(P,W,\Gw)$ the best constant satisfying \eqref{HardyType_def}.

\item[(b)] The operator   $P-\lambda_0 W$ is \textit{critical} in $\Gw$; that is, the inequality
$$q(\vgf) \geq  \int_{\Gw} W_1(x)\vgf^2(x) \dx \qquad \forall \vgf \in C_0^\infty(\Omega)$$
is not valid for any $W_1 \gneqq \lambda_0W$.

\item[(c)] The constant $\lambda_0$ is also the best constant for \eqref{HardyType_def} with test functions supported in the exterior of any fixed compact set in $\Omega$.
\item[(d)] The operator $P - \lambda_0 W$ is {\it null-critical} in $\Gw$; that is, the corresponding Rayleigh-Ritz variational problem
\begin{equation}\label{RRVP}
\inf_{\vgf\in \mathcal{D}_P^{1,2}(\Gw)}\left\{\frac{q(\varphi)}{\int_{\Gw} W(x) |\vgf(x)|^2 \dx}\right\}
\end{equation}
 admits no minimizer. Here $\mathcal{D}_P^{1,2}(\Omega)$ is the completion of $C_0^\infty(\Omega)$ with respect to the norm $u\mapsto \sqrt{q(u)}$.
\item[(e)] If furthermore, $W>0$, then the spectrum and the essential spectrum of the Friedrichs extension of the operator $W^{-1}P$ on $L^2(\Gw, W\dx)$ are both equal to  $[\lambda_0,\infty)$.
\end{enumerate}
\end{theorem}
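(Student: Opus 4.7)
The plan is to construct $W$ explicitly via the supersolution construction mentioned in the introduction, applied to two specific positive solutions of $Pu=0$, and then verify (a)--(e) in sequence.

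\emph{Step 1 (Construction of $W$).} Fix $x_0\in\Gw$ and let $G=G_{x_0}$ denote the minimal positive Green function of $P$ with pole at $x_0$, which exists since $P$ is subcritical. Let $u_0$ be a positive solution of $Pu=0$ in $\Gw$ of minimal growth in a neighborhood of the ideal boundary of $\Gw$. Set
$$v:=\sqrt{u_0\,G},\qquad W:=\tfrac14\,\langle A\nabla\log(G/u_0),\,\nabla\log(G/u_0)\rangle.$$
A direct computation, which is the case $\ga=1/2$ of the supersolution construction applied to $u_0^{1-\ga}G^{\ga}$, shows $Pv=-Wv$ on $\Gw\sm\{x_0\}$; integration by parts against $\vgf^2/v$ with $\vgf\in\core$ then yields the Hardy-type inequality (a) with $\gl=1$. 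A concentration argument near $x_0$ gives $\gl_0=1$.

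\emph{Step 2 (Criticality, (b)).} The function $v$ is a positive solution of $(P-W)u=0$ on $\Gw\sm\{x_0\}$ which, by construction, is of minimal growth at the ideal boundary of $\Gw$ (inherited from $u_0$) and near $x_0$ (inherited from the minimality of $G$). Hence $v$ is a global positive solution of minimal growth, i.e., a ground state of $P-W$, and so $P-W$ is critical by the Allegretto--Piepenbrink-type characterization of criticality. This rules out any strict improvement $P-W\geq W_1\gneqq 0$.

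\emph{Step 3 (Parts (c) and (d)).} The constant $1/4=\max_{\ga\in[0,1]}\ga(1-\ga)$ appearing in $W$ is the largest one that the local supersolution construction permits simultaneously at \emph{both} singular loci of $v/u_0$ and $v/G$; this yields (c). For null-criticality (d), any minimizer $\vgf^*\in\mathcal{D}_P^{1,2}(\Gw)$ of \eqref{RRVP} would, by the Euler--Lagrange equation and the strict positivity of the ground state, be proportional to $v$, so the question reduces to showing that $v\notin\mathcal{D}_P^{1,2}(\Gw)$. Via the ground-state substitution $\vgf=v\gy$, this is equivalent to showing $\int\langle A\nabla h,\nabla h\rangle v^2\,\dx=+\infty$ for $h:=\log(G/u_0)$. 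This divergence estimate, carried out by the coarea formula applied to the level sets of $h$ together with the asymptotic profiles of $G$ and $u_0$ at their respective singularities, is the main technical obstacle in the proof.

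\emph{Step 4 (Spectrum, (e)).} Given (a)--(d), this is by now standard. Criticality yields $\inf\gs(W^{-1}P)=\gl_0$ for the Friedrichs realization on $L^2(\Gw,W\dx)$, and null-criticality precludes $\gl_0$ from being an eigenvalue, hence $\gl_0\in\gs_{\mathrm{ess}}(W^{-1}P)$. The ground-state transformation $\vgf\mapsto v\gy$ identifies $W^{-1}P-\gl_0$ unitarily with a self-adjoint weighted Laplace-type operator of infinite volume, whose essential spectrum is $[0,\infty)$; this is verified by constructing Weyl sequences supported on thin shells between consecutive level sets of $h$, whose existence relies precisely on the failure of $v$ to lie in the form domain established in Step~3.
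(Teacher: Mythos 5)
You have chosen the right construction: Theorem~\ref{temp_def} is not proved in this paper at all (it is quoted from \cite{DFP}, and the paper only remarks that the proof of Theorem~\ref{thm_opt_hardy} gives an alternative proof of parts (b) and (c)), and your Step 1 is exactly the supersolution construction of \cite{DFP} with $\alpha=1/2$ applied to the Green function and a second positive solution, i.e.\ the $p=2$ case of the machinery developed in Sections 3--5 here. However, as an outline of a proof it has several genuine gaps.

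First, your $u_0$ does not exist as defined. For a subcritical operator, a \emph{global} positive solution of minimal growth in a neighborhood of infinity is precisely an Agmon ground state, and its existence is equivalent to criticality (see Theorem 2.6 and Definition 2.7 of the paper); the minimal growth solution of a subcritical $P$ is the Green function itself, which is singular at $x_0$. What the construction actually requires --- see Remark~\ref{rem4}(1) and \cite[Theorem~2.2]{DFP} --- is a global positive solution $u_0$ of $Pu=0$ with $\lim_{x\to\bar\infty}G/u_0=0$; the existence of such a pair is a hypothesis (or needs a separate argument), and all of your Steps 2--4 use the resulting behavior of $G/u_0$ at the two ends. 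Second, Step 2 is circular: minimal growth is a notion attached to an operator, and $v=\sqrt{u_0G}$ is a solution of $(P-W)u=0$, not of $Pu=0$, so its minimal growth for $P-W$ is not ``inherited'' from the minimality of $G$ or $u_0$ for $P$. Proving that $v$ is a ground state of $P-W$ (equivalently, that $P-W$ is critical) is the substantive content of part (b); in \cite{DFP}, and in the quasilinear analogue here (Proposition~\ref{pro_crit}), it is done by exhibiting a null-sequence built from logarithmic cutoffs of $G/u_0$, using a coarea formula on the level sets.

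Third, the optimality-of-constant claims are not proved. In Step 1 the assertion $\lambda_0=1$ ``by a concentration argument'', and in Step 3 the assertion that (c) follows because $1/4=\max_\alpha\alpha(1-\alpha)$, only show that the supersolution construction itself cannot yield a larger constant; they do not exclude that some other argument gives a better constant for test functions supported near an end. The actual proof is by contradiction: an improved inequality near one end is transplanted, via the ground-state substitution and the coarea formula, into a one-dimensional weighted Hardy inequality on an interval which is known to fail (this is the content of Proposition~\ref{best_const1} in the $p$-version, and of the corresponding step in \cite{DFP}). Your reduction of (d) to $\int v^2|\nabla h|^2_A\dx=\infty$ is essentially correct, but you defer precisely the coarea computation that constitutes the proof. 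Finally, Step 4 is too optimistic: Weyl sequences supported on shells between level sets of $h$ can only show $\lambda_0\in\sigma_{\mathrm{ess}}$; to obtain $\sigma_{\mathrm{ess}}=[\lambda_0,\infty)$ one must produce singular sequences for \emph{every} $\lambda>\lambda_0$, which in \cite{DFP} is done by constructing a family of generalized eigenfunctions (oscillating functions of the form $v\,(G/u_0)^{\pm is}$) --- the paper explicitly points out that part (e) rests on this construction and has no analogue for $p\neq 2$, so it cannot be dismissed as ``standard''.
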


In the present paper we consider the quasilinear case. Let $1<p<\infty$, and denote by $\Gd_p(u):=\div(|\nabla u|^{p-2}\nabla u)$ the $p$-Laplace operator. Throughout the paper, $\Gw$ is either a domain in $\R^n$, or a noncompact smooth Riemannian manifold of dimension $n$, $n\geq 2$, such that $0\in \Gw$.  Let $V\in L^\infty_{\mathrm{loc}}(\Omega)$ be a real valued potential, and let $Q_V$ be the quasilinear operator
\begin{equation}\label{eqQ}
Q_V(u)=Q(u):=-\div(|\nabla u|^{p-2}\nabla u)+V(x)|u|^{p-2}u
\end{equation}
defined on $\Omega$. Denote by
$$\mathcal{Q}_V(\varphi)=\mathcal{Q}(\varphi):=\int_\Omega \big(|\nabla \varphi|^p+V|\varphi|^p\big)\dnu$$ the associated energy defined on $C^\infty_0(\Gw)$. We say that
$\mathcal{Q}\geq 0$ in $\Gw$  if $\mathcal{Q}(\varphi)\geq 0$ for all  $\varphi\in C^\infty_0(\Gw)$.

Let $W\geq 0$ in $\Gw$.  We denote
\begin{align*}
\lambda_0(Q_V,W,\Omega)&:=\sup\{\gl\in\Real\mid \mathcal{Q}_{V-\gl W}\geq 0 \; \mbox{ in }  \Gw \},\\[2mm]
\lambda_\infty(Q_V,W,\Omega)&:=\sup\{\gl\in\Real\mid \exists K\subset
\subset\Gw\mbox{ s.t. } \mathcal{Q}_{V-\gl W}\geq 0 \; \mbox{ in }  \Gw\setminus K \},
\end{align*}
respectively, the {\em best constant} and {\em best constant at infinity} in the Hardy-type inequality

$$\mathcal{Q}_V(\varphi) \geq \gl \int_{\Omega^\star} W|\varphi|^p\dnu,\qquad \forall\varphi\in C_0^\infty(\Omega).$$

The aim of the present article is to generalize Theorem~\ref{temp_def} (obtained in the linear case), to the quasilinear case and to obtain ``as large as possible" nonnegative (optimal) weight $W$ satisfying
$$\mathcal{Q}(\varphi)\geq\gl  \int_{\Omega} W(x)|\varphi|^p\dnu \quad\forall \varphi\in\core.$$ In particular, we answer affirmatively a problem posed by the authors in \cite{DFP} (see Problem~13.12 therein).

The extension of Theorem~\ref{temp_def} to the quasilinear case is not a straightforward task. First, due to the nonlinearity of the operator $Q_V$, the supersolution construction has to be modified, and in particular in the case $p>n$, the supersolution construction leading to optimal potentials is essentially different. In fact, we could not extend Theorem~\ref{temp_def} to operators $Q_V$ with $V\neq 0$. Secondly, the proof of Theorem~\ref{temp_def} given in \cite{DFP} is mostly of linear nature, and therefore a new approach is needed for the quasilinear case. Moreover, the proof of Theorem~\ref{thm_opt_hardy} actually provides us with an alternative proof for parts (b) and (c) of Theorem~\ref{temp_def}. On the other hand, it seems that there is no analog to part (e) of Theorem~\ref{temp_def} concerning the essential spectrum of the corresponding operator. We note that in the linear case, the proof of part (e) relies on a construction of a family of generalized eigenfunctions, and this construction does not apply to the quasilinear case.

Let us introduce first our definition of {\em optimal Hardy-weights} for $Q_V$ in a punctured domain.
\begin{definition}\label{def_opt}{\em
Suppose that $\mathcal{Q}_V\geq 0$ in $\Gw$, and denote $\Omega^\star:=\Gw\setminus \{0\}$. Assume that a nonzero nonnegative function $W$ satisfies the following Hardy-type inequality \begin{equation}\label{opt_hardy4}
\mathcal{Q}_V(\varphi) \geq \gl \int_{\Omega^\star} W|\varphi|^p\dnu \qquad \forall \varphi\in C_0^\infty(\Omega^\star),
\end{equation}
where $\gl$ is a positive constant. Set $\lambda_0:=\lambda_0(Q_V,W,\Omega^\star)$.

We say that $W$ is an {\em optimal Hardy-weight} for the operator $Q_V$ in $\Gw$ if the following conditions hold true.
\begin{enumerate}

 \item The functional $\mathcal{Q}_{V-\lambda_0 W}$ is {\em critical in} $\Gw^\star$, i.e. for any $W_1\gneqq \gl_0 W$, the Hardy-type inequality
 $$ \mathcal{Q}_V(\varphi) \geq \int_{\Omega^\star} W_1 |\varphi|^p\dnu\qquad \forall \varphi\in C_0^\infty(\Omega^\star)$$
does not hold. In particular, the equation $Q_{V-\lambda_0 W}(u)=0$ in $\Omega^\star$ admits, up to a multiplicative positive constant, a unique positive (super)solution $v$; such a $v$ is called the {\em Agmon ground state}.

 \item  $\gl_0$ is also the best constant for inequality \eqref{opt_hardy4} restricted to functions $\vgf$ that are compactly supported either in a fixed punctured neighborhood of the origin, or in a fixed neighborhood of infinity in $\Gw$. In particular, $\lambda_\infty\left(Q_V,W,\Omega^\star \right)=\lambda_0.$

 \item Suppose further that $V\geq 0$. For an open set $\tilde{\Gw}\subset \Gw$, let $\mathcal{D}_{\mathcal{Q}_V}^{1,p}(\tilde{\Gw})$ be the completion of $C_0^\infty(\tilde{\Gw})$ with respect to the norm $\mathcal{Q}_V(\cdot)^{1/p}$. Then the functional $\mathcal{Q}_{V-\gl_0 W}$ is {\em null-critical} at $0$ and at infinity in the following sense: for any pre-compact open set $O$ containing $0$, the (Agmon) ground state $v$ of $Q_{V-\gl_0W}$ in $\Gw^\star$ belongs neither to $\mathcal{D}_{\mathcal{Q}_V}^{1,\,p}(\Omega\setminus \bar{O})$ nor to $\mathcal{D}_{\mathcal{Q}_V}^{1,\,p}(O\setminus\{0\})$.
  In particular, the variational problem
  \begin{equation}\label{var_prob}
\inf_{v\in \mathcal{D}^{1,\,p}(\Omega^\star)}\left\{\frac{\mathcal{Q}_V(\vgf)}{\int_{\Omega^\star}|\vgf|^pW\dnu}\right\}
\end{equation}
  does not admit a minimizer.
\end{enumerate}
 }
\end{definition}
\begin{remark}\label{ab_not_c}{\em
It is natural to ask whether all the above properties of an optimal Hardy-weight are independent. It is indeed the case; in fact, in \cite{DFP} we gave the following example which shows that, in general, (3) is not a consequence of (1) and(2).

Let $0\leq V\in C_0^\infty(\mathbb{R}^n)$ be a potential such that the operator $-\Gd-V(x)$ is critical in $\mathbb{R}^n$. Consider the operator $Q:=-\Gd+\mathbf{1}-V(x)$, and the potential $W(x):=\mathbf{1}$. Then $\gl_{0}(Q,W,\mathbb{R}^n)=\gl_{\infty}(Q,W,\mathbb{R}^n)=1$. On the other hand, the operator $Q-W$ is null-critical in $\mathbb{R}^n$ for $n\leq 4$, and positive-critical if $n>4$.
 }
\end{remark}

\begin{remark}\label{rem_conv}{\em

If $p\neq 2$, the definition of $\mathcal{D}_{\mathcal{Q}_V}^{1,p}(\Gw)$ cannot be applied to
the case where $V\not \geq 0$, since the positivity of the
functional $\mathcal{Q}_V$ on $\core$ does not necessarily imply its
convexity, and thus it does not give rise to a norm (see the discussion in \cite{ky6}).
 }
\end{remark}
Using a modified supersolution construction, we obtain the main result of our paper:
\begin{Thm}\label{thm_opt_hardy}
Let $\bar{\infty}$ denote the ideal point in the one-point compactification of $\Gw$. Suppose that $-\Delta_p$ admits a positive $p$-harmonic function $\mathcal{G}$ in $\Omega^\star:=\Gw\setminus \{0\}$ satisfying one of the following conditions \eqref{assumpt_7} and \eqref{assumpt_8}:

\begin{align}\label{assumpt_7}
 1<p\leq n, \;& \lim_{x\to 0}\mathcal{G}(x)=\infty, \qquad \mbox{and } \lim_{x\to\bar{\infty}}\mathcal{G}(x)=0,\\[2mm]
p>n,  \;& \lim_{x\to 0}\mathcal{G}(x)=\gg \geq 0, \;\; \mbox{and } \lim_{x\to\bar{\infty}}\mathcal{G}(x)=\left\{
                                       \begin{array}{ll}
                                         \infty & \hbox{if }\; \gg =0,\\[2mm]
                                         0 & \hbox{if } \gg\; > 0.
                                       \end{array}
                                     \right.
\label{assumpt_8}
\end{align}
Define a positive function $v$ and a nonnegative weight $W$ on $\Omega^\star$ as follows:
\begin{enumerate}
\item  If either \eqref{assumpt_7} is satisfied, or \eqref{assumpt_8} is satisfied with $\gg =0$, then $v:=\mathcal{G}^{(p-1)/p}$, and $W:=\left(\frac{p-1}{p}\right)^p\left|\frac{\nabla \mathcal{G}}{\mathcal{G}}\right|^p$.
\item If \eqref{assumpt_8} is satisfied with $\gg>0$, then $v:=[\mathcal{G}(\gamma-\mathcal{G})]^{(p-1)/p}$, and

$$W:=\left(\frac{p-1}{p}\right)^p\left|\frac{\nabla \mathcal{G}}{\mathcal{G}(\gg-\mathcal{G})}\right|^p|\gg-2\mathcal{G}|^{p-2}\left[2(p-2)\mathcal{G}(\gg-\mathcal{G})+\gg^2\right].$$

\end{enumerate}
\medskip
\noindent Then the following Hardy-type inequality holds in $\Omega^\star$:
\begin{equation}\label{opt_hardy}
 \int_{\Omega^{\star}}|\nabla \varphi|^p\dnu \geq \int_{\Omega^\star} W|\varphi|^p\dnu \qquad \forall \varphi\in C_0^\infty(\Omega^\star),
\end{equation}
and $W$ is an {\em optimal} Hardy-weight for $-\Gd_p$ in $\Gw$.

Moreover, up to a multiplicative constant, $v$ is the unique positive supersolution of the equation $Q_{-W}(w)=0$ in $\Gw^\star$.
\end{Thm}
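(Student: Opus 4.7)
The proof follows the quasilinear supersolution construction, with the Picone identity as its main engine. First I would verify by a direct chain-rule computation that $v$ is an exact positive solution of $Q_{-W}(v)=0$ in $\Omega^\star$. In case~(1), substituting $v=\mathcal{G}^\alpha$ with $\alpha=(p-1)/p$ and using $\Delta_p \mathcal{G}=0$ gives
\[
-\Delta_p(\mathcal{G}^\alpha)=-\alpha^{p-1}(\alpha-1)(p-1)\,\mathcal{G}^{\alpha(p-1)-p}|\nabla\mathcal{G}|^p,
\]
and $\alpha=(p-1)/p$ maximizes $-\alpha^{p-1}(\alpha-1)(p-1)$ on $[0,1]$, producing the sharp constant $\bigl(\tfrac{p-1}{p}\bigr)^p$. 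In case~(2), $\gg-\mathcal{G}$ is again $p$-harmonic, so with $h=\mathcal{G}(\gg-\mathcal{G})$ the analogous (longer) calculation yields $-\Delta_p v=Wv^{p-1}$ with exactly the stated $W$. Once a positive solution is in hand, the Picone identity of Allegretto--Huang (the quasilinear Agmon--Allegretto--Piepenbrink principle) yields \eqref{opt_hardy} immediately.

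For criticality, I would invoke the characterization from the $p$-Laplace criticality theory: $\mathcal{Q}_{-W}$ is critical in $\Omega^\star$ iff it admits a \emph{null sequence}, iff positive (super)solutions of $Q_{-W}(w)=0$ are unique up to a positive multiplicative constant. The latter equivalence directly delivers the final clause of the theorem, so the task reduces to constructing a null sequence. I would take $\varphi_k=\eta_k v$, where $\eta_k$ is a logarithmic cutoff built solely from $\mathcal{G}$ (case~(1)) or from $h$ (case~(2)), interpolating between two level sets that approach $0$ and $\bar\infty$. A quantitative Picone-type identity writes
\[
\mathcal{Q}_{-W}(\eta v)=\int_{\Omega^\star} R(\eta,v)\dnu\geq 0,
\]
with $R$ controlled pointwise by $|\nabla\eta|^p$; the logarithmic choice forces $\int R(\eta_k,v)\dnu\to 0$. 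Splitting $\eta_k$ into its ``inner'' and ``outer'' halves (supported near a level set tending to $0$ and to $\bar\infty$ respectively) produces null sequences supported in arbitrarily small punctured neighborhoods of $0$ \emph{or} of $\bar\infty$, which yields $\lambda_\infty=\lambda_0=1$.

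Finally, null-criticality (for $V=0$, so that $\mathcal{D}^{1,p}_{\mathcal{Q}_V}(\,\cdot\,)$ is the ordinary $\mathcal{D}^{1,p}$) reduces, via the equation satisfied by $v$, to showing $\int_{U\cap\Omega^\star}Wv^p\dnu=\infty$ for any punctured neighborhood $U$ of $0$ and any neighborhood $U$ of $\bar\infty$. In case~(1), $Wv^p=\bigl(\tfrac{p-1}{p}\bigr)^p|\nabla\mathcal{G}|^p/\mathcal{G}$, and the coarea formula along level sets of $\mathcal{G}$, together with the standard near-$0$ and near-$\bar\infty$ asymptotics of the minimal positive $p$-harmonic Green function, gives logarithmic divergence at both ends. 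Case~(2) is handled analogously using $h$ in place of $\mathcal{G}$, the two endpoints $h\to 0^+$ collecting contributions from $\mathcal{G}\to 0$ and $\mathcal{G}\to\gg$. The main technical obstacle is the null-sequence step: one must design cutoffs that simultaneously approximate $v$ locally and drive the Picone remainder to zero, which is most delicate in case~(2), where $v$ vanishes at both ends of $\Omega^\star$ and the symmetry $\mathcal{G}\leftrightarrow\gg-\mathcal{G}$ must be carefully exploited.
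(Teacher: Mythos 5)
Your overall architecture coincides with the paper's: $v$ is shown to solve $Q_{-W}(v)=0$ exactly via the chain-rule formula for $\Delta_p(f(\mathcal{G}))$ (the paper's Lemma~\ref{weak_lapl}, feeding Propositions~\ref{supersolution p-Laplacian} and \ref{super_p>n}), the inequality \eqref{opt_hardy} then follows from the Allegretto--Piepenbrink/Picone principle, criticality is proved by a null sequence of the form $v\,\varphi_n(v)$ with logarithmic cutoffs, null-criticality by a coarea computation showing the energy of $v$ diverges at each end (your $\int Wv^p\,d\nu$ equals $\int|\nabla v|^p\,d\nu$ in case (1) and is comparable to it near the ends in case (2)), and uniqueness of the positive supersolution is the standard consequence of criticality. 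However, two steps are not correct as written. First, for $p>2$ the Picone remainder for $\varphi=\eta v$ is \emph{not} controlled pointwise by $v^p|\nabla\eta|^p$: the sharp two-sided bound (the simplified energy of Lemma~\ref{simpl_energy}--\ref{simple_est}) contains the additional term $\int v^2|\nabla v|^{p-2}\eta^{p-2}|\nabla\eta|^2\,d\nu$, which involves $|\nabla v|^{p-2}$ and cannot be absorbed into $X(\eta):=\int v^p|\nabla\eta|^p\,d\nu$. With the logarithmic cutoffs one has $X_n\asymp(\log n)^{1-p}\to 0$ but $Y_n:=\int\eta_n^p|\nabla v|^p\,d\nu\asymp\log n\to\infty$, and the extra term is only seen to vanish through the estimate $\mathcal{Q}_{\mathrm{sim}}(\eta_n)\le C\bigl[X_n+(X_n/Y_n)^{2/p}Y_n\bigr]$; the computation of $Y_n$ via the coarea formula is therefore indispensable and missing from your sketch --- precisely at the step you yourself flag as the delicate one.

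Second, splitting the cutoff into its ``inner'' and ``outer'' halves does \emph{not} produce null sequences supported near a single end: in general $\mathcal{Q}_{-W}$ restricted to a fixed neighborhood of one end is subcritical (e.g.\ the classical Hardy operator admits logarithmically improved inequalities in exterior domains), so no such null sequence can exist; concretely, a half-cutoff must return to zero at the fixed inner boundary of the neighborhood, and that fixed ramp contributes a non-vanishing amount to the energy, so $\mathcal{Q}_{-W}(v\eta_k)\not\to 0$ and there is no fixed $B$ with $\int_B|v\eta_k|^p\,d\nu\asymp 1$. What optimality at the ends actually requires is that the Rayleigh quotient $\mathcal{Q}_{-W}(\varphi)\big/\int W|\varphi|^p\,d\nu$ have infimum zero over test functions supported near either end, and in this construction that is forced not by decay of the numerator but by the logarithmic growth of the denominator ($Y_k\asymp\log k$), again with the $p>2$ correction term handled as above. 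The paper argues this step differently, by contradiction: assuming an improved constant near an end, substituting $\psi=v\varphi$, passing to the simplified energy, and using the coarea formula to reduce to the one-dimensional inequality $\int_0^1|\phi|^p\,\frac{dt}{t}\le C\int_0^1\bigl(t|\phi'(t)|\bigr)^p\frac{dt}{t}$ on $C_0^\infty\bigl((0,1)\bigr)$, which fails (Maz'ya, or explicit logarithmic test functions). Either route can be made to work, but your version needs these repairs before it goes through.
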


\begin{remark}\label{assumptions}
{\em
Let us discuss hypotheses \eqref{assumpt_7} and \eqref{assumpt_8}. Suppose first that $\Gw$ is a $C^{1,\ga}$-bounded domain with $0<\ga\leq 1$. Let $G^\Gw(x,0)$ be the positive minimal $p$-Green function of the operator $-\Delta_p$ in $\Gw$ with a pole at $0$. Then $\mathcal{G}:=G^\Gw(\cdot,0)$ satisfies either \eqref{assumpt_7}, or  \eqref{assumpt_8} with $\gg>0$. This assertion follows, for example, from the results in \cite{FP,Garcia} and is valid more generally for any subcritical operator $Q_V$ with $V\in L^\infty(\Gw)$.

Suppose further that $\Omega$ is a $C^{1,\alpha}$-subdomain of a noncompact Riemannian manifold $M$ (where $\alpha\in(0,1]$), with a positive $p$-Green function $G^{M}$ that satisfies
$$\lim_{x\to \bar{\infty}} G^{M}(x,0)=0 .$$
Using a standard exhaustion argument, the monotonicity of the Green functions as a function of the domain, and the above remark, it follows that $\mathcal{G}:=G^\Gw(\cdot,0)$ satisfies either \eqref{assumpt_7}, or \eqref{assumpt_8} with $\gg>0$.

If $\Omega=\R^n$, $Q=-\Gd_p$, and $1<p<n$ (resp., $p>n$) , then $\mathcal{G}(x):=|x|^{\frac{p-n}{p-1}}$ satisfies assumption \eqref{assumpt_7} (resp., assumption \eqref{assumpt_8} with $\gg=0$). In this case, $\Omega^\star=\R^n\setminus \{0\}$ is the punctured space, and $W(x)=\left(\frac{p-1}{p}\right)^p\left|x\right|^{-p}$ is the classical Hardy potential. We note that the criticality of the operator
$$Q_{-W}(u)=-\div(|\nabla u|^{p-2}\nabla u)-\left(\frac{p-1}{p}\right)^p\frac{|u|^{p-2}u}{\left|x\right|^{p}}\qquad  \mbox{in }\Gw^\star$$
follows also from the proof of \cite[Theorem 1.3]{PS} given by Poliakovsky and Shafrir.
 }
\end{remark}

\begin{remark}\label{rem_ends}{\em
In our study, the domain $\Gw^\star$ should be viewed as a manifold with two ends: the origin and $\bar{\infty}$, the ideal point obtained by the one-point compactification of $\Gw$. In particular, the notion of optimal Hardy-weight can be extended analogously to the case of any manifold with two ends (see Section~\ref{sec_ann_ext}, for an extension of Theorem \ref{thm_opt_hardy} to annular or exterior domains).
}
\end{remark}
The outline of the present paper is as follows. In Section~\ref{sec_prelim} we review the theory of positive solutions for $p$-Laplacian type equations. Section~\ref{sec_coarea} is devoted to a coarea formula which is a key result in our study (see Proposition~\ref{pro_key}). Section~\ref{sec33} explains the supersolution construction of Hardy-weights in various situations. Section~\ref{sec_pf} is devoted to the proof of Theorem~\ref{thm_opt_hardy}. Section~\ref{sec_ann_ext} we present extensions of Theorem~\ref{thm_opt_hardy} to the case of annular and exterior domains.  In Section~\ref{sec_rellich} we present some $L^p$-Rellich-type inequalities and discuss the optimality of the obtained constants. Finally, in Section~\ref{sec34} we study the supersolution construction for general operators $Q_V$ of the form \eqref{eqQ}, where the obtained weight is in general not optimal.

\mysection{Preliminaries}\label{sec_prelim}

Let $\Omega$ be a domain in $\R^n$ (or in a noncompact Riemannian manifold of dimension $n$), where $n\geq 2$. We equip $\Gw$ with the one-point compactification, and denote by $\bar{\infty}$ the added ideal point which we call the \textit{infinity} in $\Omega$. So, $x_n\to \bar{\infty}$ if  and only if the sequence $\{x_n\}_{n\in\mathbb{N}}\subset \Gw$ eventually exits any compact subset of $\Omega$. For example, if $\Omega\subset \R^n$ is bounded, then the infinity in $\Gw$ is just $\partial\Omega$, and $x_n\to \bar{\infty}$ if and only if  $\mathrm{dist}(x_n,\partial \Omega)\to 0$, where $\mathrm{dist}(\cdot,\partial\Gw)$ is the distance function to $\partial \Gw$.

Throughout the paper we assume that $\Omega$ is equipped with an absolutely continuous measure $\nu$ with respect to the Lebesgue measure in $\R^n$ (or with respect to the Riemannian measure in the case of a Riemannian manifold), and that the corresponding density is positive and smooth.

We write $\Omega_1 \Subset \Omega_2$ if $\Omega_2$ is open, $\overline{\Omega_1}$ is
compact and $\overline{\Omega_1} \subset \Omega_2$. Let $f,g \in C(D)$ be nonnegative functions, we denote $f\asymp g$ on
$D$ if there exists a positive constant $C$ such that
$$C^{-1}g(x)\leq f(x) \leq Cg(x) \qquad \mbox{ for all } x\in D.$$

For $1<p<\infty$, we consider a quasilinear operator

\begin{equation}\label{P}
Q_V(u)=Q(u):=-\Gd_p(u)+V|u|^{p-2}u ,
\end{equation}
where $V\in L^\infty_{\mathrm{loc}}(\Omega)$. Here, the $p$-Laplacian $\Delta_p$ is defined by

$$\Delta_p(u):=\div(|\nabla u|^{p-2}\nabla u),$$
where $\div$ is the divergence  with respect to the measure $\nu$, so, the integration by parts formula
$$\int_\Omega -\div(X)\vgf\dnu = \int_\Omega  X\cdot\nabla \vgf \dnu $$
holds for any smooth vector field $X$ and function $\vgf$ that are compactly supported in $\Omega$. Associated to $Q_V$ there is the energy functional
\begin{equation}
\mathcal{Q}_V(\varphi)=\mathcal{Q}(\varphi):=\int_\Omega \big(|\nabla \varphi|^p+V|\varphi|^p\big)\dnu\qquad \varphi \in C_0^\infty(\Omega).
\end{equation}
We say that $u\in W^{1,\,p}_{\mathrm{loc}}(\Omega)$ is a (weak) {\em solution} of the equation $Q(u)=f$ in $\Gw$ if for every $\varphi \in C_0^\infty(\Omega)$,

\begin{equation}\label{weak_def}
\int_\Omega \left( |\nabla u|^{p-2}\nabla u\cdot \nabla \varphi+V|u|^{p-2}u\varphi\right)\dnu=\int_\Omega f\varphi\dnu\,.
\end{equation}
We define in a similar way the notions of {\em subsolution} and {\em supersolution} of $Q(u)=f$. Weak
solutions of the equation $Q(u)=0$ admit H\"older continuous first
derivatives, and nonnegative solutions of the equation  $Q(u)=0$
satisfy the Harnack inequality (see for example
\cite{LU,PuccS,Serrin1,Serrin2,T}). Therefore, in the definition \eqref{weak_def} with $f=0$, one can equivalently take test functions in $C^1_0(\Omega)$ instead of $C_0^\infty(\Omega)$.

The notions of criticality and subcriticality of $Q_V$ have been studied in this context, and we refer to \cite{PT1} for an account on this. For completeness, we recall the essential notions and results that we need throughout the present paper.

The operator $Q$ is said to be {\em nonnegative in $\Gw$} (and we denote it by $Q\geq 0$) if the equation $Q(u)=0$ in $\Gw$ admits a positive (super)solution.
As in the (selfadjoint) linear case, the following Allegretto-Piepenbrink type theorem holds:
\begin{Thm}[{\cite[Theorem~2.3]{PT1}}]\label{AAP}
$Q_V\geq 0$ in $\Gw$ if and only if $\mathcal{Q}_V(\varphi)\geq 0$ for every $\varphi\in C_0^\infty(\Omega)$.

\end{Thm}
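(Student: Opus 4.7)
The plan is to prove the two implications separately. For the easier direction ($Q_V \geq 0 \Rightarrow \mathcal{Q}_V \geq 0$ on $C_0^\infty(\Omega)$), fix a positive supersolution $u \in C^{1,\alpha}_{\mathrm{loc}}(\Omega)$ of $Q_V(w) = 0$ (such regularity for nonnegative solutions was recalled above) and a test function $\varphi \in C_0^\infty(\Omega)$. The key ingredient is the Picone-type pointwise inequality for the $p$-Laplacian,
$$|\nabla \varphi|^p \;\geq\; |\nabla u|^{p-2}\, \nabla u \cdot \nabla\!\left(\frac{|\varphi|^p}{u^{p-1}}\right),$$
a consequence of the convexity of $t \mapsto |t|^p$. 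The Harnack inequality guarantees that $u$ is bounded away from $0$ on $\mathrm{supp}\,\varphi$, so $\psi := |\varphi|^p / u^{p-1}$ lies in $W^{1,p}_0(\Omega)$, has compact support and is nonnegative; testing the weak supersolution inequality against $\psi$ yields
$$\int_\Omega |\nabla u|^{p-2}\, \nabla u \cdot \nabla \psi \dnu + \int_\Omega V u^{p-1} \psi \dnu \;\geq\; 0,$$
and combining with Picone gives $\mathcal{Q}_V(\varphi) \geq 0$.

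For the converse, I fix an exhaustion $\Omega_1 \Subset \Omega_2 \Subset \cdots$ with $\bigcup_k \Omega_k = \Omega$, and a smooth positive weight $f$ on $\Omega$, and consider the weighted principal eigenvalue
$$\lambda_k := \inf\!\left\{\mathcal{Q}_V(\varphi) : \varphi \in W^{1,p}_0(\Omega_k),\ \int_{\Omega_k} f|\varphi|^p \dnu = 1\right\}.$$
Since $f$ is bounded below by a positive constant on $\overline{\Omega_k}$ and $V \in L^\infty_{\mathrm{loc}}$, the constraint forces an $L^p$-bound, which together with the lower bound on $\int V|\varphi|^p$ produces a uniform $W^{1,p}_0$-bound on any minimizing sequence; by weak lower semicontinuity of $\varphi \mapsto \int |\nabla \varphi|^p$ and the compact embedding $W^{1,p}_0(\Omega_k) \hookrightarrow L^p(\Omega_k)$, a minimizer $u_k$ exists. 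Replacing $u_k$ by $|u_k|$ we may assume $u_k \geq 0$, and the Harnack inequality applied to its Euler-Lagrange equation
$$Q_V(u_k) = \lambda_k f\, u_k^{p-1} \qquad \text{in } \Omega_k$$
forces $u_k > 0$ in $\Omega_k$. The hypothesis $\mathcal{Q}_V \geq 0$ on $C_0^\infty(\Omega)$ (and hence on $W^{1,p}_0(\Omega_k)$ by density) gives $\lambda_k \geq 0$. Normalizing $u_k(x_0) = 1$ at a fixed $x_0 \in \Omega_1$, the Harnack inequality supplies uniform local bounds away from $\partial \Omega_k$, and the interior $C^{1,\alpha}$ regularity of DiBenedetto/Tolksdorf provides local precompactness. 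Extracting a locally uniformly convergent subsequence $u_k \to u$ and, along a further subsequence, $\lambda_k \to \lambda_\infty \geq 0$, one passes to the limit in the weak equation to obtain a function $u > 0$ in $\Omega$ (positivity preserved via Harnack applied to the normalized sequence) satisfying $Q_V(u) = \lambda_\infty f u^{p-1} \geq 0$ weakly in $\Omega$. Hence $u$ is a positive supersolution, and $Q_V \geq 0$.

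The main technical obstacles are threefold. First, justifying $\psi = |\varphi|^p/u^{p-1}$ as an admissible test function when $u$ is merely a weak supersolution in $W^{1,p}_{\mathrm{loc}}$ requires a cutoff/approximation argument combined with Harnack to keep $u$ uniformly bounded below on $\mathrm{supp}\,\varphi$. Secondly, passing to the limit in the nonlinear term $|\nabla u_k|^{p-2} \nabla u_k$ is handled via the standard monotonicity/Leray-Lions trick together with the local $C^{1,\alpha}$ convergence that upgrades weak to strong convergence of gradients on compact sets. Thirdly, strict positivity of the limit $u$ throughout $\Omega$ is secured by the pointwise normalization $u_k(x_0)=1$ and the uniform Harnack constants on any compact subset of $\Omega$. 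Aside from these technical points, the argument closely parallels the classical linear Allegretto-Piepenbrink theorem, with Picone's identity and the quasilinear Harnack inequality playing the roles of their linear counterparts.
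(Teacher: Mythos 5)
The paper does not actually prove this statement: it quotes it as Theorem~2.3 of [PT1] (Pinchover--Tintarev), so there is no in-paper proof to compare against. Your argument is correct and is essentially the standard proof found in that reference: the ``supersolution $\Rightarrow$ nonnegativity of $\mathcal{Q}_V$'' direction via the Allegretto--Huang Picone inequality with the test function $|\varphi|^p/u^{p-1}$ (legitimate because the weak Harnack inequality bounds the supersolution away from zero on $\operatorname{supp}\varphi$), and the converse via weighted principal eigenvalue problems on an exhaustion $\Omega_k\Subset\Omega$, positivity of the eigenfunctions by Harnack, $\lambda_k\ge 0$ from the hypothesis, and a normalization-plus-Harnack/$C^{1,\alpha}$ compactness argument to pass to a positive supersolution in the limit. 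The technical caveats you flag (testing against $|\varphi|^p/u^{p-1}$ for a merely weak $W^{1,p}_{\mathrm{loc}}$ supersolution, strong local convergence of gradients, and uniform Harnack constants, which hold since $0\le\lambda_k\le\lambda_1$ keeps the coefficients locally uniformly bounded) are exactly the right ones and are handled as you indicate.
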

Throughout the paper, we assume that $Q$ is nonnegative in $\Gw$. As in the linear case, there is a dichotomy for nonnegative operators:  $Q$ of the form \eqref{P} is either {\em critical} or {\em subcritical} in $\Gw$. We note that in the case of $Q=-\Delta_p$ on a Riemannian manifold $M$ equipped with its Riemannian measure, criticality (resp., subcriticality) is often called {\em $p$-parabolicity} (resp., {\em $p$-hyperbolicity}). Criticality/subcriticality has several equivalent definitions, which we recall below, but first we need to introduce some notions.

\begin{Def}
{\em

We say that a sequence $\{\varphi_k\}_{k\in\mathbb{N}}$ of nonnegative functions belonging to $C_0^\infty(\Omega)$ is a {\em null-sequence} for $\mathcal{Q}$ in $\Gw$ if there exists an open set $B\Subset \Omega$ such that

$$\lim_{k\to\infty} \mathcal{Q}(\varphi_k)=\lim_{k\to\infty} \int_\Omega \big(|\nabla \varphi_k|^p+V|\varphi_k|^p\big)\dnu=0,\quad \mbox{and } \int_B |\varphi_k|^p\dnu\asymp 1.$$
}
\end{Def}

\begin{Def}
{\em
Let $K_0$ be a compact set in $\Omega$.  A positive solution $u$
of the equation $Q(w)=0$ in $\Omega\setminus K_0$ is said to be a
{\it  positive solution of minimal growth in a neighborhood of
infinity in} $\Omega$ (or $u\in\M_{\Omega,K_0}$ for brevity) if
for any compact set $K$ in $\Omega$, with a smooth boundary, such
that $K_0 \Subset \mathrm{int}(K)$, and any positive supersolution
$v\in C((\Omega\setminus K)\cup
\partial K)$ of the equation $Q(w)=0$ in $\Omega\setminus K$,
the inequality $u\le v$ on $\partial K$ implies that $u\le v$ in
$\Omega\setminus K$.

Similarly, for $x_0\in \Gw$, we define the notion of a positive solution of the equation $Q(w)=0$ in a punctured neighborhood of $x_0$ of minimal growth at $x_0$.
}

\end{Def}

We have
\begin{Thm}[\cite{PT1,FP}]
Suppose that $\mathcal{Q}$ is nonnegative in $\Omega$, and fix $x_0\in \Omega$. Then the equation $Q(w)=0$ has (up to a multiplicative constant) a unique positive solution $u\in \M_{\Omega,\{x_0\}}$ of minimal growth in a neighborhood of
infinity in $\Omega$.

Moreover, $u$ is either a {\em global} positive  solution of $Q(w)=0$ in $\Gw$ (such a solution is called {\em Agmon's ground state}), or $u$ has singularity at $x_0$ with the following asymptotic:
$$
u(x) \underset{x\to x_0}{\sim}\; \left\{
    \begin{array}{ll}
        |x-x_0|^{\frac{p-n}{p-1}} & \mbox{if } 1<p< n, \\[2mm]
        -\log|x-x_0| & \mbox{if } p=n,\\[2mm]
        1  & \mbox{if } p>n.
    \end{array}
\right.
$$
In the latter case, the appropriately normalized solution is called the {\em positive minimal Green function of $Q$ in $\Gw$} with a pole at $x_0,$ and is denoted by $G_Q^\Gw(x,x_0)=G(x)$.

Furthermore, any positive solution $v$ of $Q(w)=0$ in a punctured neighborhood of $x_0$ of minimal growth at $x_0$ has the following asymptotic near $x_0$:
$$
v(x) \underset{x\to x_0}{\sim}\; \left\{
    \begin{array}{ll}
        1 & \mbox{if } 1<p\leq n, \\[2mm]
                |x-x_0|^{\frac{p-n}{p-1}}  & \mbox{if } p>n.
    \end{array}
\right.
$$
\end{Thm}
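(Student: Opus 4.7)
The plan is to prove the existence, uniqueness, dichotomy, and asymptotic parts by an exhaustion/compactness construction, the weak comparison principle, and standard local regularity/removable singularity results for the $p$-Laplacian. The main tools throughout are the Harnack inequality for nonnegative solutions of $Q_V(u)=0$ with $V\in L^\infty_\loc$, the interior $C^{1,\alpha}_\loc$ regularity of Tolksdorf--DiBenedetto--Lieberman, Serrin's removable singularity theorem, and the criticality theory of \cite{PT1,FP}.

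For existence, first I would fix a smooth exhaustion $\Omega_k\Subset\Omega_{k+1}\nearrow\Omega$ with $x_0\in\Omega_1$. On each bounded smooth $\Omega_k$, $Q$ admits a positive minimal Green function $G_k(x):=G^{\Omega_k}_Q(x,x_0)$ with pole at $x_0$ (a standard construction in the quasilinear setting, e.g.\ \cite{Garcia,Serrin1,Serrin2}). Fix $y_0\in\Omega_1\setminus\{x_0\}$ and normalize $\tilde G_k:=G_k/G_k(y_0)$. The Harnack inequality gives uniform bounds on $\tilde G_k$ on every compact subset of $\Omega\setminus\{x_0\}$, and the $C^{1,\alpha}_\loc$ estimates then yield a subsequence converging in $C^{1}_\loc(\Omega\setminus\{x_0\})$ to a positive solution $u$ of $Q(w)=0$ on $\Omega\setminus\{x_0\}$.

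To verify that $u\in\M_{\Omega,\{x_0\}}$, I would pick a compact $K$ with $x_0\in\operatorname{int}(K)\Subset\Omega$ and a positive supersolution $v$ on $\Omega\setminus K$ with $u\leq v$ on $\partial K$. For any $\varepsilon>0$ and $k$ large enough so that $K\Subset\Omega_k$, the convergence $\tilde G_k\to u$ on $\partial K$ gives $\tilde G_k\leq(1+\varepsilon)v$ there, while $\tilde G_k=0<(1+\varepsilon)v$ on $\partial\Omega_k$; the weak comparison principle for $Q$ on the bounded domain $\Omega_k\setminus K$ yields $\tilde G_k\leq(1+\varepsilon)v$ throughout, and letting $k\to\infty$ and $\varepsilon\to 0^+$ gives $u\leq v$. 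Uniqueness of such a minimal-growth solution up to a positive constant is then obtained by a standard argument: if $\tilde u$ is another, the constant $c_*:=\sup_{\partial K}(u/\tilde u)$ gives $u\leq c_*\tilde u$ on $\Omega\setminus K$ by minimality, and shrinking $K$ toward $\{x_0\}$ combined with the strong comparison theorem of \cite{PT1,FP} forces $u\equiv c_*\tilde u$.

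The dichotomy and asymptotics then come from the behavior of $\tilde G_k(x_0)$. If $\{\tilde G_k(x_0)\}$ stays bounded, a Harnack argument on shrinking balls around $x_0$ extends $u$ to a positive solution of $Q(w)=0$ on all of $\Omega$, which by \cite{PT1} is an Agmon ground state and $Q$ is critical; otherwise $\tilde G_k(x_0)\to\infty$, $u$ is singular at $x_0$, $Q$ is subcritical, and after normalization $u$ is the minimal Green function $G^{\Omega}_Q(\cdot,x_0)$. Near $x_0$, $Q_V$ is a bounded perturbation of $-\Delta_p$, whose radial fundamental solution is $\Gamma(x):=|x-x_0|^{(p-n)/(p-1)}$ for $p\neq n$ and $\Gamma(x):=-\log|x-x_0|$ for $p=n$; two-sided barrier comparisons on a small punctured ball pin down $G^{\Omega}_Q(\cdot,x_0)\asymp\Gamma$ in each of the three ranges. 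For a minimal-growth solution $v$ at $x_0$, when $1<p\leq n$ minimality at $x_0$ prohibits any $\Gamma$-type blow-up, so $v$ stays bounded near $x_0$, and then Serrin's removable singularity theorem extends $v$ as a positive $p$-harmonic function across $x_0$ with $v(x_0)>0$ by Harnack; when $p>n$, $\Gamma$ itself vanishes at $x_0$ and is the smallest positive local solution, so minimality at $x_0$ forces $v\sim\Gamma$ near $x_0$. I expect the strong comparison/uniqueness step to be the main obstacle, since the linear Hopf-style argument is unavailable for $-\Delta_p$ and one must rely on the refined quasilinear comparison results of \cite{PT1,FP}.
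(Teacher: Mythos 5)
This theorem is not proved in the paper at all: it is imported verbatim from \cite{PT1,FP}, so there is no internal proof to measure your argument against, and what you are really attempting is a reconstruction of those references. Your toolbox (exhaustion by Green functions of $\Omega_k$, Harnack plus $C^{1,\alpha}_{\mathrm{loc}}$ compactness, weak comparison on $\Omega_k\setminus K$, Serrin-type isolated singularity theory) is indeed the right family of tools, and the existence and minimal-growth steps are essentially sound, modulo the remark that the weak comparison principle for $Q_V$ with sign-changing $V$ is itself not free: it requires positivity of the generalized principal eigenvalue on the subdomain, which holds here because $Q\geq 0$ in $\Omega$ and $\Omega_k\setminus K$ is a proper subdomain, but this should be said rather than assumed.

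There are, however, two concrete gaps. First, your dichotomy criterion ``$\tilde G_k(x_0)$ bounded versus $\tilde G_k(x_0)\to\infty$'' fails precisely when $p>n$: in that range the Green function does \emph{not} blow up at the pole (by the very asymptotics in the statement, the singular minimal-growth solution tends to a positive constant at $x_0$, the singularity being non-extendability as a solution rather than blow-up), so boundedness at $x_0$ cannot distinguish the critical case (global ground state) from the subcritical one. Second, the uniqueness and asymptotics steps are where the real work lies, and your treatment is circular or insufficient: the strong comparison principle you invoke is not available for $p\neq2$ in the needed generality (it fails at degenerate points of the gradient), and the argument via $c_*(K)=\sup_{\partial K}(u/\tilde u)$ does not close because $c_*$ varies as $K$ shrinks unless one already knows that the ratio $u/\tilde u$ has a limit at $x_0$ --- which is exactly the pole asymptotics one is trying to prove. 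Likewise, two-sided barrier comparisons only yield $u\asymp\Gamma$, not the asymptotic equivalence $u\sim C\,\Gamma$ asserted in the theorem; upgrading $\asymp$ to $\sim$ (and handling $p=n$ and the perturbation by $V$ near the pole) is Serrin's deep isolated-singularity analysis as extended in \cite{FP}, not a routine comparison. The correct order of events in the cited proofs is: first establish the local classification of positive solutions near an isolated singularity (removable versus $\Gamma$-type, with precise asymptotics), then deduce uniqueness of the minimal-growth solution by applying minimality at infinity to $(c+\varepsilon)\tilde u$ where $c=\lim_{x\to x_0}u/\tilde u$, and treat the critical case separately via uniqueness of the positive supersolution (Lemma 2.8 of the paper). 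As written, your proposal assumes the hard half of the statement in order to prove the rest.
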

\begin{definition}\label{def_crit}{\em
Suppose that $Q\geq 0$ in $\Gw$. Then  $Q$ is said to be {\em critical} in $\Omega$ if the equation $Q(u)=0$ in $\Omega$ admits a (Agmon) ground state, and {\em subcritical} in $\Omega$ otherwise.
 }
\end{definition}
\begin{lemma}[\cite{PT1}]\label{lem_crit}
Suppose that $Q\geq 0$ in $\Gw$. Then the following assertions are equivalent:
\begin{enumerate}
\item $Q$ is {\em critical} in $\Omega$.
\item The equation $Q(w)=0$ in $\Omega$ admits a unique positive supersolution (up to a multiplicative constant).
\item  The only nonnegative function $W$ such that the inequality
$$\mathcal{Q}(\varphi)\geq \int_{\Omega} W(x)|\varphi|^p\dnu$$ holds for every $\varphi\in C_0^\infty(\Omega)$ is $W=0$.
\item  $\mathcal{Q}$ admits a null sequence in $\Gw$.
 \end{enumerate}
\end{lemma}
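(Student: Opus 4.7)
The plan is to establish the equivalence by running the cycle $(1) \Rightarrow (4) \Rightarrow (3) \Rightarrow (2) \Rightarrow (1)$, using Theorem~\ref{AAP} and the minimal-growth theorem preceding Definition~\ref{def_crit} as the main tools. For $(1) \Rightarrow (4)$, I would start from the Agmon ground state $u$ and manufacture a null-sequence by cutoff. Fix a precompact open set $B \Subset \Gw$ and a smooth exhaustion $\{\Gw_k\}$ of $\Gw$ with $B \Subset \Gw_0$. Choose $\eta_k \in C_0^\infty(\Gw)$ with $0 \leq \eta_k \leq 1$, $\eta_k \equiv 1$ on $B$, and (crucially) $\int_\Gw u^p |\nabla \eta_k|^p \dnu \to 0$ as $k \to \infty$. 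Setting $\vgf_k := \eta_k u$, I would expand $|\nabla \vgf_k|^p$ and, using the weak equation $Q(u) = 0$ tested against $\eta_k^p u$ together with a $p$-Laplace Picone-type identity, conclude $\mathcal{Q}(\vgf_k) \leq C_p \int_\Gw u^p |\nabla \eta_k|^p \dnu \to 0$, while $\int_B |\vgf_k|^p \dnu = \int_B u^p \dnu$ stays bounded below.

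For $(4) \Rightarrow (3)$, test the hypothesized Hardy inequality against a null-sequence $\{\vgf_k\}$ to obtain $\int_\Gw W |\vgf_k|^p \dnu \leq \mathcal{Q}(\vgf_k) \to 0$. Using the normalization $\int_B|\vgf_k|^p \dnu \asymp 1$ and standard compactness, I would pass to a subsequence that converges locally in $W^{1,p}_{\loc}(\Gw)$ to a nontrivial nonnegative limit; by Harnack's inequality this limit is strictly positive on any compact subset of $\Gw$, forcing $W \equiv 0$ a.e. For $(3) \Rightarrow (2)$, I would argue by contraposition: if $Q$ admits two linearly independent positive supersolutions $u_0, u_1$, the supersolution construction developed in Section~\ref{sec33} (with a suitable interpolation parameter $\ga \in (0,1)$) produces a nonzero nonnegative weight $W$ satisfying $\mathcal{Q}(\vgf) \geq \int_\Gw W|\vgf|^p \dnu$ for all $\vgf \in C_0^\infty(\Gw)$, contradicting (3).

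For $(2) \Rightarrow (1)$, apply the minimal-growth theorem to some fixed $x_0 \in \Gw$: the unique (up to scale) minimal-growth solution $u$ is either a global Agmon ground state or the Green function at $x_0$. In the latter case $Q$ is subcritical, and one can produce by Perron-type exhaustion a second, globally regular positive supersolution not proportional to $u$ (for instance, by solving the Dirichlet problem on each $\Gw_k$ with a positive barrier provided by $Q \geq 0$ and passing to the limit), contradicting uniqueness (2); hence $u$ must be a global ground state, i.e., $Q$ is critical. The main obstacle is $(1) \Rightarrow (4)$: unlike in the linear setting where the Leibniz rule yields a clean quadratic expansion, the quasilinear case forces one to absorb nonlinear cross terms via a $p$-Laplace Picone inequality and to select cutoffs carefully adapted to the asymptotic behavior of the ground state, with possibly different constructions in the regimes $1 < p \leq n$ and $p > n$.
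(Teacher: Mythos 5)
A preliminary remark: the paper offers no proof of this lemma at all --- it is quoted verbatim from \cite{PT1} --- so your outline has to be judged against the argument there, and against the tools this paper actually has available.

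The serious gap is in your step $(1)\Rightarrow(4)$, which as written is circular. You postulate cutoffs $\eta_k$ with $\eta_k\equiv 1$ on $B$ and $\int_\Gw u^p|\nabla\eta_k|^p\dnu\to 0$, but you never use the minimal-growth property of the Agmon ground state $u$; the identical construction could be run with any global positive solution of a \emph{subcritical} operator (such a solution always exists by Theorem~\ref{AAP}), and it would then ``prove'' that every nonnegative $Q$ is critical, which is false. The existence of such cutoffs is precisely the $p$-parabolicity of the weighted space $(\Gw,u^p\dnu)$, which --- through the ground state transform --- is essentially equivalent to the existence of a null-sequence, i.e.\ to the statement you are trying to prove; converting minimal growth into this capacity-type statement is the heart of the implication and is simply missing. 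In addition, your energy bound $\mathcal{Q}(\eta_k u)\leq C_p\int_\Gw u^p|\nabla\eta_k|^p\dnu$ is only correct for $1<p\leq 2$: by Lemmas~\ref{simpl_energy} and \ref{simple_est}, for $p>2$ the upper bound carries the extra term $\int_\Gw u^2|\nabla u|^{p-2}\eta_k^{p-2}|\nabla\eta_k|^2\dnu$ (the $\left(X/Y\right)^{2/p}Y$ term), which is exactly why the null-sequence constructions in Propositions~\ref{pro_crit} and \ref{pro_crit2} must track both $X_n$ and $Y_n$ via the coarea formula and cannot rely on $X_n\to0$ alone.

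Your $(3)\Rightarrow(2)$ also leans on a tool that does not exist in the required generality: for $p\neq2$ there is no supersolution construction producing a Hardy-weight from two \emph{arbitrary} linearly independent positive supersolutions of $Q_V$. Proposition~\ref{supersolution p-Laplacian} requires $v_1=\mathbf{1}$ and $V=0$, Theorem~\ref{supersol_cons} requires $v_1=\varphi_1(v_0)$ (common level sets, nonvanishing gradients), and Remark~\ref{rem_9} explicitly notes that the general convexity statement (Lemma~\ref{Prop2_ky3}) yields no explicit weight. A correct route is different: if $u$ is a positive supersolution that is not a solution, the nonnegative, nonzero excess $Q(u)$ furnishes a weight $W$ with $Q_{V-W}(u)\geq 0$, and Theorem~\ref{AAP} gives the Hardy inequality (the case of two genuine solutions needs a separate argument, e.g.\ via $\min(u_0,u_1)$) --- this is where the actual work in \cite{PT1} lies. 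Likewise, in $(4)\Rightarrow(3)$ the passage from $\mathcal{Q}(\vgf_k)\to0$, $\int_B|\vgf_k|^p\dnu\asymp1$ to a limit that is positive on all of $\Gw$ is not ``standard compactness'': one needs $W^{1,p}_{\loc}$-bounds on arbitrary compacts and the fact that the limit is a positive (super)solution so that Harnack applies; that convergence of a null-sequence to the ground state is the main theorem of \cite{PT1}, not a routine step. Your $(2)\Rightarrow(1)$, by contrast, is sound in outline: in the subcritical case the Green function (a supersolution with a Dirac excess) and the global positive solution from Theorem~\ref{AAP} already violate uniqueness.
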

A nonnegative functional $\mathcal{Q}$ might contain an indefinite term (if the potential has indefinite sign). Although, by Picone identity \cite{AH}, such functional $\mathcal{Q}$ can be represented as the integral of a nonnegative Lagrangian $L$, this $L$ still contains an indefinite term. It was proved in \cite{PTT} that $\mathcal{Q}$ is equivalent to a {\em simplified energy} containing only nonnegative terms, as we explain now.
\begin{definition}\label{defse}{\em
Let $v$ be a positive solution of the equation $Q(u)=0$ in $\Gw$. The {\em simplified energy} is defined for nonnegative functions $w\in C_0^\infty(\Omega)$ by
\begin{equation}\label{eq_simp}
\!\mathcal{Q}_{\mathrm{sim}}(w)\!:=\!\!\left\{\!\!
  		\begin{array}{ll}
  			\displaystyle{\int_\Omega \!\!\Big(v^2|\nabla w|^2(v|\nabla w|+w|\nabla v|)^{p-2}\Big)\dnu} & \!\!\mbox{ if } 1<p\leq 2,\\[6mm]
			\displaystyle{\int_\Omega\! \Big(v^p|\nabla w|^p+v^2|\nabla v|^{p-2}w^{p-2}|\nabla w|^2\Big)\dnu} & \!\!\mbox{ if } p>2.
		\end{array}
\right.
    \end{equation}
 }
\end{definition}
Since Picone identity holds also on manifolds (cf. \cite[Section~2]{PTT}), it follows that Lemma 2.2 in \cite{PTT} is valid also on manifolds. Therefore, we obtain the following equivalence between the functional $\mathcal{Q}$ and the simplified energy $\mathcal{Q}_{\mathrm{sim}}$:
\begin{Lem}[{\cite[Lemma 2.2]{PTT}}]\label{simpl_energy}
Assume that $Q=Q_V\geq 0$ in $\Omega$. Let $v\in C_\mathrm{loc}^{1,\ga}(\Gw)$ be a fixed positive solution of the equation $Q(u)=0$ in $\Omega$. Then for all $w\in C_0^\infty(\Omega)$ we have
$$\mathcal{Q}(w) \asymp \mathcal{Q}_{\mathrm{sim}}\left(\frac{w}{v}\right).$$
\end{Lem}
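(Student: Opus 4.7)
The plan is to follow the ``supersolution construction'' philosophy of Section \ref{sec33}: identify an explicit positive solution of $Q_{-W}(u)=0$, invoke the Allegretto-Piepenbrink Theorem \ref{AAP} to deduce the Hardy inequality, and then establish each of the three optimality properties in Definition \ref{def_opt} by combining the simplified energy of Lemma \ref{simpl_energy} with the coarea formula of Section \ref{sec_coarea}. For the first step, if $\phi$ is a $C^2$ function on the range of $\mathcal{G}$, the $p$-harmonicity of $\mathcal{G}$ yields the pointwise identity
\[
\Delta_p\bigl(\phi(\mathcal{G})\bigr)=(p-1)\,|\phi'(\mathcal{G})|^{p-2}\phi''(\mathcal{G})\,|\nabla\mathcal{G}|^p.
\]
Plugging in $\phi(s)=s^{(p-1)/p}$ (case~1) or $\phi(s)=[s(\gamma-s)]^{(p-1)/p}$ (case~2), a direct calculation produces $-\Delta_p v=Wv^{p-1}$ with exactly the stated weight $W$, and Theorem \ref{AAP} delivers \eqref{opt_hardy}.

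To prove criticality of $\mathcal{Q}_{-W}$ in $\Omega^\star$ together with equality of $\lambda_0$ and $\lambda_\infty$ localized at each of the two ends $0$ and $\bar\infty$, I would construct a null-sequence of the form $\varphi_k:=v\cdot(\eta_k\circ F\circ\mathcal{G})$, where $F$ is a smooth monotone change of variable mapping the range of $\mathcal{G}$ onto $\mathbb{R}$ (take $F=\log$ in case~1; in case~2 take any increasing function sending $\mathcal{G}\to 0$ to $-\infty$ and $\mathcal{G}\to\gamma$ to $+\infty$), and $\eta_k$ is a logarithmic cutoff on $\mathbb{R}$. By Lemma \ref{simpl_energy}, $\mathcal{Q}_{-W}(\varphi_k)\asymp\mathcal{Q}_{\mathrm{sim}}(\eta_k\circ F\circ\mathcal{G})$, and Proposition \ref{pro_key} converts this simplified energy into a one-dimensional integral of the form $\int_{\R}|\eta_k'(t)|^p\mu(t)\dt$ with $\mu$ determined by the $p$-capacity of the level sets $\{\mathcal{G}=s\}$. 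Choosing $F$ so that $\mu$ is bounded forces the integral to tend to $0$ as $k\to\infty$, while $\int_B|\varphi_k|^p\dnu\asymp 1$ for a fixed $B\Subset\Omega^\star$. Translating the support of $\eta_k$ far to the left (respectively, right) in the variable $t$ localizes the null-sequence near $0$ (respectively, near $\bar\infty$), so that the best constant equals $1$ separately near each end. The uniqueness of positive supersolutions of $Q_{-W}(w)=0$ is then immediate from Lemma \ref{lem_crit}(2).

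For null-criticality at both ends, I would show that $\int_{\{\mathcal{G}<s_0\}}|\nabla v|^p\dnu=\infty$ and $\int_{\{\mathcal{G}>s_1\}}|\nabla v|^p\dnu=\infty$ for every threshold $s_0,s_1$ in the interior of the range of $\mathcal{G}$. Via the coarea formula each integral becomes a one-dimensional expression $\int|\phi'(s)|^p\kappa(s)\ds$, and the explicit form of $\phi$ (producing a factor $s^{-(p-1)}$ in case~1, and a pair of such factors near $s=0$ and $s=\gamma$ in case~2) then forces a logarithmic divergence at each endpoint. A standard density argument shows that no sequence in $C_0^\infty(\Omega\setminus\bar O)$ or in $C_0^\infty(O\setminus\{0\})$ can converge to $v$ in the $\mathcal{D}_\mathcal{Q}^{1,p}$ norm, and that the variational problem \eqref{var_prob} admits no minimizer.

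The main technical hurdle lies in case~2, where $W$ carries the extra factor $|\gamma-2\mathcal{G}|^{p-2}\bigl[2(p-2)\mathcal{G}(\gamma-\mathcal{G})+\gamma^2\bigr]$ arising from the nontrivial second derivative of $s\mapsto[s(\gamma-s)]^{(p-1)/p}$; the change of variables $F$ and the capacity densities extracted from the coarea formula must be balanced so that $\mathcal{Q}_{\mathrm{sim}}$ reduces to a bounded one-dimensional integral and the two ends $\mathcal{G}\to 0$ and $\mathcal{G}\to\gamma$ are handled symmetrically. This bookkeeping is precisely what the machinery of Section \ref{sec_coarea} is engineered to perform.
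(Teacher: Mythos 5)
You have proved the wrong statement. The lemma you were asked to establish is Lemma~\ref{simpl_energy}, namely the two-sided comparison $\mathcal{Q}(w)\asymp\mathcal{Q}_{\mathrm{sim}}(w/v)$ between the energy functional and the simplified energy \eqref{eq_simp}, for an arbitrary nonnegative $Q_V$ and an arbitrary positive solution $v$. Your proposal instead sketches the proof of Theorem~\ref{thm_opt_hardy} (Hardy inequality via the supersolution construction, criticality via null-sequences, optimality at the two ends, null-criticality), and in doing so it explicitly \emph{invokes} Lemma~\ref{simpl_energy} (``By Lemma~\ref{simpl_energy}, $\mathcal{Q}_{-W}(\varphi_k)\asymp\mathcal{Q}_{\mathrm{sim}}(\cdots)$''). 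Read as a proof of the lemma itself this is circular, and none of the machinery you marshal (Green functions, the coarea formula of Proposition~\ref{pro_key}, capacity densities, behaviour at the ends) is relevant: the lemma involves no Green function, no ends, and no criticality assumption.

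What is actually required is the nonlinear ground-state transform. The paper disposes of it by noting that Picone's identity holds on manifolds and citing \cite[Lemma~2.2]{PTT}; a self-contained argument goes as follows. Write $w=v\psi$, so $\psi=w/v\in C_0^{1}(\Omega)$ (admissible, since $v\in C^{1,\ga}_{\mathrm{loc}}$ is positive and locally bounded away from zero on $\supp w$). Testing the weak equation $Q(v)=0$ against $v|\psi|^p\in C_0^1(\Omega)$ eliminates the potential term and gives
\begin{equation*}
\mathcal{Q}(v\psi)=\int_\Omega\Big(|\nabla(v\psi)|^p-|\nabla v|^{p-2}\nabla v\cdot\nabla\big(v|\psi|^p\big)\Big)\dnu .
\end{equation*}
Expanding $\nabla(v\psi)=\psi\nabla v+v\nabla\psi$ and $\nabla(v|\psi|^p)=|\psi|^p\nabla v+p\,v|\psi|^{p-2}\psi\nabla\psi$, the integrand is exactly $|a+b|^p-|a|^p-p|a|^{p-2}a\cdot b$ with $a:=\psi\nabla v$ and $b:=v\nabla\psi$. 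The elementary two-sided vector inequality $|a+b|^p-|a|^p-p|a|^{p-2}a\cdot b\asymp |b|^2\big(|a|+|b|\big)^{p-2}$, with constants depending only on $p$, then yields the case $1<p\leq 2$ of \eqref{eq_simp} directly, while for $p>2$ one also uses $\big(|a|+|b|\big)^{p-2}\asymp|a|^{p-2}+|b|^{p-2}$ to arrive at $|b|^p+|a|^{p-2}|b|^2$, i.e.\ the second line of \eqref{eq_simp}. Without this step (or the citation to \cite{PTT}), the equivalence $\mathcal{Q}\asymp\mathcal{Q}_{\mathrm{sim}}$ — on which your entire sketch of Theorem~\ref{thm_opt_hardy} rests — remains unproved.
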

Lemma \ref{simpl_energy} is a generalization of the \textit{ground state transform} (see \cite{DFP}) to the nonlinear case. In the nonlinear case, one obtains the equivalence (and not equality, as in the linear case) between $\mathcal{Q}$ and a functional containing only positive terms. As a corollary of Lemma~\ref{simpl_energy}, we state the following obvious upper estimate for the simplified energy, which will be of use later.

\begin{Lem}\label{simple_est}
Denote
$$X(w):=\int_{\Omega^\star} v^p |\nabla w|^p\dnu, \qquad Y(w):=\int_{\Omega^\star} |w|^p |\nabla v|^p\dnu.$$
Then there exists $C>0$ such that for all $w\in \core$ we have
\begin{equation}\label{est_qsim}
\mathcal{Q}_{\mathrm{sim}}(w)\leq\left\{
  		\begin{array}{ll}
  			CX(w) & \mbox{ if } \;1<p\leq 2,\\\\
			C\left[X(w)+\left(\frac{X(w)}{Y(w)}\right)^{2/p}Y(w)\right] & \mbox{ if }\; p>2.
		\end{array}
\right.
\end{equation}

\end{Lem}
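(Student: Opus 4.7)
The plan is to treat the two branches of Definition \ref{defse} separately and obtain both bounds with constant $C=1$ by elementary pointwise/integral inequalities; since the simplified energy is intrinsically nonnegative, it suffices to handle $w\ge 0$ and apply the result to $|w|$ in the general case.

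For $1<p\le 2$, I would exploit that the map $t\mapsto t^{p-2}$ is non-increasing on $(0,\infty)$. Since $v|\nabla w|+w|\nabla v|\ge v|\nabla w|$ pointwise when $v,w\ge 0$, raising to the nonpositive power $p-2$ reverses the inequality and yields
\[
(v|\nabla w|+w|\nabla v|)^{p-2}\le v^{p-2}|\nabla w|^{p-2}\qquad \text{on } \{|\nabla w|>0\}.
\]
Plugging this into the subquadratic branch of $\mathcal{Q}_{\mathrm{sim}}$ collapses the integrand to $v^p|\nabla w|^p$, giving directly $\mathcal{Q}_{\mathrm{sim}}(w)\le X(w)$. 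Points where $|\nabla w|=0$ contribute zero to the original integral (the factor $|\nabla w|^2$ out front vanishes), so the blow-up of the right-hand side there is harmless.

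For $p>2$, the first summand in $\mathcal{Q}_{\mathrm{sim}}(w)$ is exactly $X(w)$, so only the second summand requires work. The key observation is the factorization
\[
v^2|\nabla v|^{p-2}w^{p-2}|\nabla w|^2=\bigl(v^p|\nabla w|^p\bigr)^{2/p}\bigl(w^p|\nabla v|^p\bigr)^{(p-2)/p},
\]
which invites H\"older's inequality with conjugate exponents $p/2$ and $p/(p-2)$. This delivers
\[
\int_{\Omega^\star} v^2|\nabla v|^{p-2}w^{p-2}|\nabla w|^2\dnu\;\le\; X(w)^{2/p}\,Y(w)^{(p-2)/p}\;=\;\left(\frac{X(w)}{Y(w)}\right)^{\!2/p}Y(w),
\]
with the convention that the bound is $+\infty$ (hence trivially valid) when $Y(w)=0$. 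Adding the two contributions yields the announced inequality.

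There is no substantial obstacle in this lemma; the argument is essentially bookkeeping with one pointwise monotonicity step and one H\"older application. The only very mild subtlety is the handling of the set $\{|\nabla w|=0\}$ in the subquadratic case, where the factored upper bound is formally singular but the original integrand vanishes identically, so the inequality continues to hold.
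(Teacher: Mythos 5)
Your proof is correct and is precisely the argument the paper has in mind (the lemma is stated there without proof as an ``obvious'' consequence of Definition~\ref{defse}): monotonicity of $t\mapsto t^{p-2}$ for $1<p\le 2$, and the factorization plus H\"older with exponents $p/2$ and $p/(p-2)$ for $p>2$, giving the bound with $C=1$. Your side remarks on the sets $\{|\nabla w|=0\}$ and $\{Y(w)=0\}$ are accurate and complete the bookkeeping.
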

We conclude this section with the following useful lemma
\begin{Lem}\label{weak_lapl}
Let $u\in C^{1,\alpha}_{\mathrm{loc}}(\Omega)$ for some $\alpha\in(0,1)$, and $f\in C^2$. Then the following formula holds in the weak sense:
\begin{equation}\label{weak_eq}
-\Delta_p(f(u))=-|f'(u)|^{p-2}\left[(p-1)f''(u)|\nabla u|^p+f'(u)\Delta_p(u)\right].
\end{equation}
\end{Lem}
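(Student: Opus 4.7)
The plan is to derive the identity by applying the weak definition of $\Delta_p u$ to a suitable auxiliary test function and invoking the chain rule.

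First I would compute the gradient and normalize. Since $u\in C^{1,\ga}_{\loc}(\Gw)$ and $f\in C^2$, the classical chain rule gives $\nabla (f(u))=f'(u)\nabla u$ pointwise, hence
$$|\nabla (f(u))|^{p-2}\nabla (f(u))=|f'(u)|^{p-2}f'(u)|\nabla u|^{p-2}\nabla u,$$
so by the weak definition of $\Gd_p(f(u))$, for any $\varphi\in\core$,
$$\langle -\Gd_p (f(u)),\varphi\rangle=\int_\Gw |f'(u)|^{p-2}f'(u)|\nabla u|^{p-2}\nabla u\cdot \nabla \varphi \dnu.$$
The key observation is that the integrand can be rewritten as $|\nabla u|^{p-2}\nabla u\cdot \bigl[h(u)\nabla\varphi\bigr]$ with $h(s):=|f'(s)|^{p-2}f'(s)$, and $h(u)\nabla\varphi$ is the ``test part'' of $\nabla(h(u)\varphi)$.

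Second, I would test $\Gd_p u$ against $\psi:=h(u)\varphi$. Since $u\in C^{1,\ga}_{\loc}$ and $f\in C^2$, one checks (at points where $f'(u)\neq 0$, and after a small regularization to absorb the possibly bad set $\{f'(u)=0\}$ when $1<p<2$) that $\psi\in W^{1,p}_0(\Gw')$ on a compact set $\Gw'\Supset\supp \varphi$. Thus $\psi$ is an admissible test function, and the weak formulation yields
\begin{equation*}
\int_\Gw |\nabla u|^{p-2}\nabla u\cdot \nabla(h(u)\varphi)\dnu
=\langle -\Gd_p u,h(u)\varphi\rangle.
\end{equation*}
Expanding $\nabla(h(u)\varphi)=h'(u)\varphi\nabla u+h(u)\nabla\varphi$ with $h'(u)=(p-1)|f'(u)|^{p-2}f''(u)$, the left-hand side splits as
$$(p-1)\int_\Gw |f'(u)|^{p-2}f''(u)|\nabla u|^p\varphi \dnu+\int_\Gw |f'(u)|^{p-2}f'(u)|\nabla u|^{p-2}\nabla u\cdot\nabla \varphi\dnu.$$

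Third, I would identify the last integral with $\langle -\Gd_p (f(u)),\varphi\rangle$ from the first step, and rearrange. Writing $\langle -\Gd_p u,h(u)\varphi\rangle$ as $\int_\Gw |f'(u)|^{p-2}f'(u)(-\Gd_p u)\varphi\dnu$ (multiplication of a distribution by a continuous function), we obtain
$$\langle -\Gd_p (f(u)),\varphi\rangle=-\int_\Gw |f'(u)|^{p-2}\bigl[(p-1)f''(u)|\nabla u|^p+f'(u)\Gd_p u\bigr]\varphi\dnu,$$
which is precisely \eqref{weak_eq} in the weak sense.

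The main obstacle is justifying that $\psi=h(u)\varphi$ is a legitimate test function for $-\Gd_p u$, in particular its weak differentiability and the chain-rule formula for $h(u)$. For $p\ge 2$ the function $h$ is $C^1$ and everything is routine; for $1<p<2$, $h$ is only $C^1$ away from zeros of $f'$, so care is needed at $\{f'(u)=0\}$. This can be handled either by a standard truncation-regularization of $h$ near $0$ (replacing $h$ by $h_\varepsilon$ of the form $(\varepsilon+|f'|^2)^{(p-2)/2}f'$ and passing to the limit via dominated convergence, using $\nabla u\in L^p_{\loc}$ and $f\in C^2$), or by noting that on the closed set $\{f'(u)=0\}$ the ambiguous factor $(p-1)|f'(u)|^{p-2}f''(u)|\nabla u|^p$ is multiplied by zero in any case where the identity is applied in the paper.
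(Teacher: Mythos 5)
Your proposal is correct and follows essentially the same route as the paper: both arguments test the weak formulation of $-\Delta_p u$ against the auxiliary function $|f'(u)|^{p-2}f'(u)\varphi$, split via the Leibniz rule, and identify $\nabla\bigl(|f'(u)|^{p-2}f'(u)\bigr)$ with $(p-1)|f'(u)|^{p-2}f''(u)\nabla u$ in the weak sense. Your regularization of $h$ for $1<p<2$ is just a more explicit version of the paper's observation that the derivative of $s\mapsto|s|^{p-2}s$, though singular at $0$, is locally integrable, so the same admissibility issue is handled in both proofs.
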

\begin{proof}
Denote $g:=-\Delta_p(u)$, and let $\varphi\in C_0^\infty(\Omega)$. Then, by Leibniz's product rule and the chain rule we have
\begin{multline*}
\int_\Omega |\nabla f(u)|^{p-2}\nabla f(u)\cdot \nabla \varphi\dnu =\\[2mm]
\int_\Omega\!\! |\nabla u|^{p\!-\!2}\nabla u\cdot \nabla \!\left(|f'(u)|^{p\!-\!2}\!f'(u)\varphi\right)\!\!\dnu
-\!\!\int_\Omega \!\!|\nabla u|^{p\!-\!2}\nabla u\cdot \nabla \!\left(|f'(u)|^{p\!-\!2}\!f'(u)\right)\!\!\varphi\!\dnu
\end{multline*}
Note that for $p\geq 2$, the function $\psi(s):=|s|^{p-2}s$ is continuously differentiable, and $\psi'(s):=(p-1)|s|^{p-2}$. Moreover, for $1<p<2$ the function $\psi$ is not differentiable at zero but its derivative near zero is integrable. Recall that by our assumptions $u\in C^{1,\alpha}(\Omega)$. Therefore if $p\geq 2$, then the function $|f'(u)|^{p-2}f'(u)\varphi$ belongs to $C^{1}_0(\Omega)$. On the other hand, for $1<p<2$, $\nabla \big(|f'(u)|^{p-2}f'(u)\varphi\big)$ is integrable. Hence in both cases, $|f'(u)|^{p-2}f'(u)\varphi$ is a legitimate test function. Consequently,
$$\int_\Omega |\nabla u|^{p-2}\nabla u\cdot \nabla \left(|f'(u)|^{p-2}f'(u)\varphi\right)\dnu=\int_\Omega g|f'(u)|^{p-2}f'(u)\varphi\dnu.$$
Therefore,
\begin{multline*}
\int_\Omega |\nabla f(u)|^{p-2}\nabla f(u)\cdot \nabla \varphi\dnu =\\[2mm]
\int_\Omega g|f'(u)|^{p-2}f'(u)\varphi\dnu -\int_\Omega |\nabla u|^{p-2}\nabla u\cdot \nabla \left(|f'(u)|^{p-2}f'(u)\right)\varphi \dnu.
\end{multline*}
Consequently, in the weak sense we have
$$-\Delta_p(f(u))=-|\nabla u|^{p-2}\nabla u\cdot \nabla \left(|f'(u)|^{p-2}f'(u)\right)-\Delta_p(u)|f'(u)|^{p-2}f'(u).$$
But since $\psi'(s):=(p-1)|s|^{p-2}$ for $s\neq 0$, and $\psi'$ is integrable at $0$, we have that in the weak sense
\begin{equation}\label{derive_2}
|\nabla u|^{p-2}\nabla u\cdot \nabla \left(|f'(u)|^{p-2}f'(u)\right)=(p-1)|f'(u)|^{p-2}|\nabla u|^pf''(u).
\end{equation}
This completes the proof of Lemma~\ref{weak_lapl}.
\end{proof}

\mysection{The coarea formula}\label{sec_coarea}
The present section is devoted to the proof of a coarea formula associated with the $p$-Laplacian. It seems that this key result in our study cannot be extended to the case of an operator $Q_V$ of the form \eqref{eqQ} with $V\neq 0$ and $p\neq 2$ (cf. \cite[Lemma~9.2]{DFP}, where an analogue coarea formula is obtained for any linear symmetric operator).
\begin{Pro}\label{pro_key}
Let $\mathcal{G}$ be a positive $p$-harmonic function in $\Gw^\star:=\Omega\setminus\{0\}$.
Define $v:=\mathcal{G}^{(p-1)/p}$. Then there exists positive constants $c$ and $\tilde{c}$ such that for every real functions $f$ and $g$, defined on $(0,\infty)$ such that $f(v)$ and $g(v)$ have compact support in $\Omega$, the following formulae hold:
\begin{equation}\label{IPP}
\int_{\Omega^\star} f(v) |\nabla v|^p \dnu=c\int_{\inf v}^{\sup v} \frac{f(\gt)}{\gt}\,\mathrm{d}\gt,
\end{equation}
and
\begin{equation}\label{IPP2}
\int_{\Omega^\star} g(\mathcal{G}) |\nabla \mathcal{G}|^p \dnu=\tilde{c}\int_{\inf \mathcal{G}}^{\sup \mathcal{G}} g(t)\dt.
\end{equation}
\end{Pro}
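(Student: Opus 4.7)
The plan is to establish \eqref{IPP2} first, and then derive \eqref{IPP} from it by the explicit change of variable $\tau = t^{(p-1)/p}$, using the chain-rule identity
$$|\nabla v|^p = \left(\frac{p-1}{p}\right)^p \mathcal{G}^{-1} |\nabla \mathcal{G}|^p.$$
The key analytic input is that the vector field $X := |\nabla \mathcal{G}|^{p-2}\nabla \mathcal{G}$ is weakly divergence-free on $\Omega^\star$, which is precisely the $p$-harmonicity of $\mathcal{G}$, combined with Federer's coarea formula.

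For \eqref{IPP2}, since $\mathcal{G}\in C^{1,\alpha}_{\mathrm{loc}}(\Omega^\star)$ is locally Lipschitz, the classical coarea formula yields
$$\int_{\Omega^\star} g(\mathcal{G})|\nabla \mathcal{G}|^p\dnu = \int_{\inf \mathcal{G}}^{\sup \mathcal{G}} g(t)\,I(t)\dt, \qquad I(t):=\int_{\{\mathcal{G}=t\}} |\nabla \mathcal{G}|^{p-1}\dS.$$
To show $I$ is (a.e.) constant, I would take an arbitrary $\psi \in C_0^\infty\bigl((\inf \mathcal{G},\sup \mathcal{G})\bigr)$ and set $\phi := \psi(\mathcal{G})$. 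Under the hypotheses of Theorem~\ref{thm_opt_hardy}, namely \eqref{assumpt_7} or \eqref{assumpt_8}, $\mathcal{G}$ attains values near $\inf \mathcal{G}$ or $\sup \mathcal{G}$ only near the two ``ends'' $0$ and $\bar{\infty}$ of $\Omega^\star$, so $\phi$ has compact support in $\Omega^\star$ and is a legitimate test function. Testing $\mathrm{div}(X)=0$ against $\phi$ then gives
$$0 = \int_{\Omega^\star} X\cdot\nabla\phi\dnu = \int_{\Omega^\star} \psi'(\mathcal{G}) |\nabla \mathcal{G}|^p\dnu = \int_{\inf \mathcal{G}}^{\sup \mathcal{G}} \psi'(t)I(t)\dt,$$
where the last equality is a second application of the coarea formula. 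Since $\psi$ is arbitrary, $I\equiv \tilde c$ as a distribution, hence a.e.; plugging this back into the coarea decomposition proves \eqref{IPP2}.

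For \eqref{IPP}, writing
$$f(v)|\nabla v|^p = \left(\frac{p-1}{p}\right)^p g(\mathcal{G})|\nabla \mathcal{G}|^p \qquad \text{with}\qquad g(t) := t^{-1}f\bigl(t^{(p-1)/p}\bigr),$$
and applying \eqref{IPP2} followed by the substitution $\tau = t^{(p-1)/p}$ (under which $t^{-1}\dt = \tfrac{p}{p-1}\tau^{-1}\dtau$), one obtains \eqref{IPP} with the explicit constant $c = \bigl(\tfrac{p-1}{p}\bigr)^{p-1}\tilde c$.

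The main obstacle is the flux-constancy step: one must verify that $\phi=\psi(\mathcal{G})$ is genuinely compactly supported in $\Omega^\star$, which is where the end behavior of $\mathcal{G}$ at $0$ and $\bar{\infty}$ supplied by \eqref{assumpt_7}-\eqref{assumpt_8} enters in an essential way. A secondary technical point is the measure-zero critical set $\{\nabla\mathcal{G}=0\}$, which is negligible in both the coarea formula and in the integration by parts but requires a routine Sard-type justification or a small truncation around the critical set.
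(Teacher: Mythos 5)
Your proposal is correct, and it reaches the same reduction as the paper (coarea formula plus constancy in $t$ of the level--set flux $I(t)=\int_{\{\mathcal{G}=t\}}|\nabla \mathcal{G}|^{p-1}\dsigma$, with \eqref{IPP} obtained from \eqref{IPP2} by the substitution $\gt=t^{(p-1)/p}$ and the identity $|\nabla v|^p=\left(\frac{p-1}{p}\right)^p\mathcal{G}^{-1}|\nabla\mathcal{G}|^p$), but it proves the key constancy step by a genuinely different route. The paper argues geometrically: it shows the ``annulus'' $\{t_1<\mathcal{G}<t_2\}$ has finite perimeter, invokes the Gauss--Green theorem of Chen--Torres--Ziemer for the continuous, weakly divergence-free field $|\nabla\mathcal{G}|^{p-2}\nabla\mathcal{G}$, and uses the Sard-type theorem of Bojarski--Haj{\l}asz--Strzelecki for $C^{1,\alpha}$ functions to identify the reduced boundaries with the level sets and to write $|\nabla\mathcal{G}|^{p-1}=\pm|\nabla\mathcal{G}|^{p-2}\nabla\mathcal{G}\cdot\mathbf{n}$ $\sigma$-a.e. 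Your argument replaces all of this by a distributional computation: testing the weak equation $-\Delta_p\mathcal{G}=0$ against $\psi(\mathcal{G})$ with $\psi\in C_0^\infty\big((\inf\mathcal{G},\sup\mathcal{G})\big)$ (admissible, since $C^1_0$ test functions suffice) and applying the coarea formula a second time gives $\int \psi'(t)I(t)\dt=0$, hence $I$ is a.e.\ constant; local integrability of $I$, needed to read this distributionally, follows from the coarea formula and the local boundedness of $|\nabla\mathcal{G}|$. This is more elementary -- it avoids sets of finite perimeter and, in fact, avoids Sard altogether: your closing worry about the critical set is unnecessary, since $|\nabla\mathcal{G}|^{p-1}$ vanishes there and the coarea formula already carries the factor $|\nabla\mathcal{G}|$. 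The price is that $\psi(\mathcal{G})$ must be compactly supported in $\Omega^\star$, i.e.\ the sets $\{a\le\mathcal{G}\le b\}$ with $[a,b]\subset(\inf\mathcal{G},\sup\mathcal{G})$ must be precompact in $\Omega^\star$; you correctly flag that this comes from the end behavior in \eqref{assumpt_7}--\eqref{assumpt_8}. Although these hypotheses are not stated in the proposition itself, the paper's Gauss--Green step implicitly needs the same precompactness (otherwise the boundary of the annulus would contain points where the field is undefined), and they hold in every application of the proposition, so your proof is on the same footing as the paper's.
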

\begin{proof}
The idea is the same as in  \cite[Lemma 9.2]{DFP}. Setting $g(t):=f(t^{(p-1)/p})$ and performing the change of variable $\gt:=t^{(p-1)/p}$, it follows that \eqref{IPP} is equivalent to \eqref{IPP2}. By the coarea formula, we have
\begin{multline}\label{gW}
	\int_{\Omega^\star}g(\mathcal{G})|\nabla \mathcal{G}|^p\dnu =\int_{\inf  \mathcal{G}}^{\sup\mathcal{G}} \left(\int_{\{\mathcal{G}=t\}}g(\mathcal{G})\frac{|\nabla \mathcal{G}|^p}{|\nabla \mathcal{G}|}\mathrm{d}\sigma\right)\mathrm{d}t\\[2mm]
		=\int_{\inf \mathcal{G}}^{\sup\mathcal{G}} g(t)\left(\int_{\{\mathcal{G}=t\}}|\nabla \mathcal{G}|^{p-1}\mathrm{d}\sigma\right)\mathrm{d}t,
		\end{multline}
where $\mathrm{d}\sigma$ denotes the Hausdorff measure of dimension $n-1$.
Indeed, $\mathcal{G}\in C^{1,\alpha}_{\mathrm{loc}}$, in particular $|\nabla \mathcal{G}|^{p-1}\in L^1_{\mathrm{loc}}$ and the use of the coarea formula is licit.  We claim that $\int_{\{\mathcal{G}=t\}}|\nabla \mathcal{G}|^{p-1}\mathrm{d}\sigma$ does not depend on $t$. This essentially follows from Green's formula, but since $\mathcal{G}$ is not smooth, we have to be careful. Let us fix $t_1, t_2$ such that $\inf\mathcal{G}<t_1<t_2<\sup\mathcal{G}$, and define $\mathcal{A}$ to be the ``annulus''
$$\mathcal{A}:=\{x\in\Gw^\star \mid t_1<\mathcal{G}<t_2\}.$$
The boundary of $\mathcal{A}$ is the disjoint union of $\partial_-:=\{\mathcal{G}=t_1\}$ and of $\partial_+:=\{\mathcal{G}=t_2\}$. We claim that $\mathcal{A}$ has \textit{finite perimeter}, i.e., $\chi_{\mathcal{A}}$, the characteristic function of $\mathcal{A}$, has bounded variation. Indeed,
$$\chi_{\mathcal{A}}=\chi_{(t_1,t_2)}\circ \mathcal{G},$$
therefore,
$$\nabla \chi_{\mathcal{A}}=\left(\chi_{(t_1,t_2)}'(\mathcal{G})\right)\,\nabla \mathcal{G}=(\delta_{\mathcal{G}=t_1}-\delta_{\mathcal{G}=t_2})\nabla \mathcal{G}.$$
Since $\nabla \mathcal{G}$ is continuous, we obtain that $\chi_{\mathcal{A}}\in BV$, hence $\mathcal{A}$ has finite perimeter. Since $|\nabla \mathcal{G}|^{p-2}\nabla \mathcal{G}$ is continuous, and has divergence which vanishes in $\mathcal{A}$ in the weak sense, Theorems 5.2 and 7.2 in \cite{Chen} imply that the Gauss-Green formula is valid on $\mathcal{A}$:
\begin{equation}\label{Gauss}
0\!=\!-\int_{\mathcal{A}}\div(|\nabla \mathcal{G}|^{p-2}\nabla \mathcal{G})\dnu\!=
\!\int_{\partial_+^\star}|\nabla \mathcal{G}|^{p-2}\nabla \mathcal{G}\cdot\mathbf{n}\,\mathrm{d}\sigma+\int_{\partial_-^\star}|\nabla \mathcal{G}|^{p-2}\nabla \mathcal{G}\cdot\mathbf{n}\,\mathrm{d}\sigma,
\end{equation}
where $\partial_+^\star$ and $\partial_-^\star$ are the \textit{reduced boundaries} (see \cite{Chen}), $\mathbf{n}$ is the \textit{measure theoretic exterior unit normal}, and $\sigma$ is the $(n-1)$-dimensional Hausdorff measure. If $x\in \partial_+$ (resp., $x\in \partial_-$) is such that $|\nabla \mathcal{G}(x)|\neq 0$, then the boundary of $\mathcal{A}$ is $C^1$ in a neighborhood of $x$, and the vector field ${\nabla \mathcal{G}}/{|\nabla \mathcal{G}|}$ is well-defined near $x$; it is equal to $\mathbf{n}$ (resp., $-\mathbf{n}$) around $x$. Furthermore, we can write around $x$

\begin{equation}\label{normal}
|\nabla \mathcal{G}|^{p-1}=|\nabla \mathcal{G}|^{p-2}|\nabla \mathcal{G}|=\pm|\nabla \mathcal{G}|^{p-2}\nabla \mathcal{G}\cdot \mathbf{n}\quad \mbox{ if }x\in \partial_{\pm}.
\end{equation}
Since $\mathcal{G}$ is $C^{1,\alpha}$, we may use a generalization of Sard's theorem due to Bojarski, Haj{\l}asz and Strzelecki \cite{BHS} to infer that for
almost every $t\in (0,\infty)$
$$\sigma\left(\{\mathcal{G}=t\}\cap \mathrm{Crit}(\mathcal{G})\right)=0,$$
where $\mathrm{Crit}(\mathcal{G})$ is the set of critical points of $\mathcal{G}$. This implies  that for almost all $t$ ,  \eqref{normal} holds $\sigma$-almost everywhere on $\{\mathcal{G}=t\}$, and that for almost all $t_1$ and  $t_2$,  the reduced boundaries  $\partial_+^\star$ and $\partial_-^\star$ coincide with $\pd_+=\{\mathcal{G}=t_2\}$ and $\pd_-=\{\mathcal{G}=t_1\}$, respectively,  up to a set of zero measure for $\sigma$.  Since $|\nabla \mathcal{G}|^{p-1}$ and $|\nabla \mathcal{G}|^{p-2}\nabla \mathcal{G}$ are continuous, we obtain that for almost all $t_1$ and $t_2$ we have
\begin{multline*}
\int_{\partial_+^\star}|\nabla \mathcal{G}|^{p-2}\nabla \mathcal{G}\cdot\mathbf{n}\,\mathrm{d}\sigma+\int_{\partial_-^\star}|\nabla \mathcal{G}|^{p-2}\nabla \mathcal{G}\cdot\mathbf{n}\,\mathrm{d}\sigma=\\
\int_{\pd_+}|\nabla \mathcal{G}|^{p-1}\mathrm{d}\sigma-\int_{\pd_-}|\nabla \mathcal{G}|^{p-1}\mathrm{d}\sigma,
 \end{multline*}
and therefore by \eqref{Gauss},
$$\int_{\{\mathcal{G}=t_2\}}|\nabla \mathcal{G}|^{p-1}\,\mathrm{d}\sigma=\int_{\{\mathcal{G}=t_1\}}|\nabla \mathcal{G}|^{p-1}\,\mathrm{d}\sigma.$$
Thus, $\int_{\{\mathcal{G}=t\}}|\nabla \mathcal{G}|^{p-1}\,\mathrm{d}\sigma$ is equal (almost everywhere) to a constant independent of $t$.

\end{proof}

\mysection{The supersolution construction for the $p$-Laplacian}\label{sec33}
In this section, we show how to extend the \textit{supersolution construction}, which was a primary tool in the study of the linear case in \cite{DFP}, to the $p\,$-Laplace operator. As in the linear case, in some cases this construction will give us \textit{optimal} Hardy weights. We postpone the study of the supersolution construction for $Q_V$ with $V\neq 0$ to Section~\ref{sec34}, and here we present two particular supersolution constructions which apply to the $p$-Laplace operator. These constructions  will lead us to the optimal weights of Theorems~\ref{thm_opt_hardy}.

For completeness, we recall the supersolution construction for linear (not necessarily symmetric) elliptic operators:
\begin{lemma}[{\cite[Theorem~3.1]{crit2}} and {\cite[Remark~5.4]{DFP}}]\label{lem_conv}
Let $P$ be a second-order {\em linear} elliptic operator with real coefficients defined in $\Gw$. For  $j=0,1$, let $V_j$ be real valued potentials, and suppose that  $v_j$ are positive (super)solutions of the equations $(P +V_j)u = 0$ in $\Gw$. Then for $0\leq \ga \leq 1$ the function
$$v_\alpha:=(v_1))^{\ga}(v_0)^{1-\ga}$$ is a positive (super)solution of the linear equation
\begin{equation}\label{Pga3}
\left[P +(1-\alpha)V_0 + \alpha V_1 - \alpha(1-\alpha)W\right]u =0 \qquad \mbox{in } \Gw,
\end{equation}
where
\begin{equation}\label{eq_W}
W:=\left|\nabla \log\left(\frac{v_0}{v_1}\right)\right|_A^2,
\end{equation}
 $A=A(x)$ is the nonnegative definite matrix associated with the principal part of the operator $P$, and for $\xi\in\R^n$, $|\xi|_A^2 := \xi \cdot A \xi$.
\end{lemma}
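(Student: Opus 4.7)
The plan is to reduce the lemma to a single pointwise identity for $Pv_\alpha/v_\alpha$, obtained via a logarithmic substitution together with the convexity of the quadratic form $q(\xi) := |\xi|_A^2$. Write $P$ in the form $Pu = -\div(A\nabla u) + b\cdot\nabla u + cu$. For a positive smooth $v$ with $\phi := \log v$, the relations $\nabla v = v\nabla\phi$ and $A\nabla v = vA\nabla\phi$ combined with the Leibniz rule $\div(vA\nabla\phi) = v|\nabla\phi|_A^2 + v\div(A\nabla\phi)$ give
$$\frac{Pv}{v} = -\div(A\nabla\phi) - |\nabla\phi|_A^2 + b\cdot\nabla\phi + c,$$
so that only the quadratic term $-|\nabla\phi|_A^2$ is nonlinear in $\phi$, while the remaining three terms are affine.

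Apply this with $\phi = \phi_\alpha := \alpha\log v_1 + (1-\alpha)\log v_0$, so $v_\alpha = e^{\phi_\alpha}$. By linearity, the affine part of $Pv_\alpha/v_\alpha$ equals the convex combination $\alpha\bigl(Pv_1/v_1 + |\nabla\log v_1|_A^2\bigr) + (1-\alpha)\bigl(Pv_0/v_0 + |\nabla\log v_0|_A^2\bigr)$, after using the analogous formula for each $v_j$ to eliminate the $\div$ and $b\cdot\nabla$ terms. For the quadratic part, the standard parallelogram-type identity $\alpha q(\xi_1) + (1-\alpha) q(\xi_0) - q\bigl(\alpha\xi_1 + (1-\alpha)\xi_0\bigr) = \alpha(1-\alpha)\,q(\xi_1 - \xi_0)$ applied with $\xi_j = \nabla\log v_j$ yields
$$-|\nabla\phi_\alpha|_A^2 = -\alpha|\nabla\log v_1|_A^2 - (1-\alpha)|\nabla\log v_0|_A^2 + \alpha(1-\alpha)W,$$
where $W$ is the weight in \eqref{eq_W}. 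Adding the two pieces produces the key identity
$$\frac{Pv_\alpha}{v_\alpha} = \alpha\frac{Pv_1}{v_1} + (1-\alpha)\frac{Pv_0}{v_0} + \alpha(1-\alpha)W.$$
Since $v_j$ is a positive (super)solution of $(P+V_j)u=0$, one has $Pv_j/v_j \geq -V_j$, and multiplying by $v_\alpha > 0$ gives $[P + (1-\alpha)V_0 + \alpha V_1 - \alpha(1-\alpha)W]v_\alpha \geq 0$, which is exactly the assertion for \eqref{Pga3}; equality holds throughout when the $v_j$ are solutions.

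The only subtlety is that the identity was derived assuming pointwise smoothness, whereas $v_j$ are a priori only weak (super)solutions in $W^{1,p}_{\loc}$. I expect this to be a bookkeeping matter rather than the main obstacle: classical interior elliptic regularity for linear operators with sufficiently regular coefficients upgrades $v_j$ to $C^{1,\gamma}_{\loc}$, so $\nabla\log v_j \in L^\infty_{\loc}$, and the identity is then verified distributionally by testing against nonnegative $\varphi \in C_0^\infty(\Omega)$ and applying the chain and Leibniz rules for $W^{1,p}$ functions. The heart of the argument is the one-line convex combination above; the nonlinear generalizations appearing elsewhere in the paper (for $-\Delta_p$ and for $Q_V$) follow the same pattern but replace the parallelogram identity with an appropriate $p$-convexity estimate.
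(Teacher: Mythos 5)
Your proof is correct and is essentially the same argument that underlies the cited lemma: the paper does not reprove it but remarks that the proof is ``purely local and algebraic,'' which is precisely your logarithmic substitution $\phi=\log v$ reducing $Pv/v$ to an affine part plus $-|\nabla\phi|_A^2$, followed by the parallelogram identity for the quadratic form $|\cdot|_A^2$ giving the cross term $\alpha(1-\alpha)\left|\nabla\log\left(v_0/v_1\right)\right|_A^2$. The only imprecision is your regularity remark: weak \emph{supersolutions} of a linear elliptic equation need not be $C^{1,\gamma}_{\mathrm{loc}}$ (that upgrade holds for solutions), but the identity can still be verified distributionally with suitable test functions, so this does not affect the substance of the argument.
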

We notice that since the proof of Lemma~\ref{lem_conv} is purely local and algebraic, we obtain in fact the following {\em pointwise} result.
\begin{Cor}\label{cor_conv}
Let $P$ be a second-order {\em linear} elliptic operator with real coefficients defined in $\Gw$. For  $j=0,1$, let $V_j$ be real valued potentials, and suppose that  $v_j$ are positive functions satisfying the differential  (in)equality $$(P +V_j)v_j \; \displaystyle{\substack{= \\[1mm] (\geq)}}\; 0\qquad \mbox{at } x_0\in \Gw.$$  Then for $0\leq \ga \leq 1$ the function
$v_\alpha=(v_1)^{\ga}(v_0)^{1-\ga}$ satisfies the differential  (in)equality
\begin{equation}\label{Pga31}
\left[P +(1-\alpha)V_0 + \alpha V_1 - \alpha(1-\alpha)W\right]u \; \displaystyle{\substack{= \\[1mm] (\geq)}}\; 0  \qquad \mbox{at } x_0\in \Gw,
\end{equation}
where $W$ is the function defined by \eqref{eq_W}.
\end{Cor}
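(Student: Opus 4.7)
The plan is to verify the corollary by direct inspection of the proof of Lemma~\ref{lem_conv} in \cite{crit2,DFP}. In both references, the identity underpinning the lemma is derived by an explicit pointwise computation: the chain rule and the product rule applied to $v_\ga := (v_1)^\ga (v_0)^{1-\ga}$, with no appeal to integration by parts or to any global property of $v_0$ and $v_1$. Consequently, the conclusion does not require $v_0$ and $v_1$ to be (super)solutions on the whole of $\Gw$; it suffices that the prescribed (in)equalities hold at a single point where $v_0,v_1$ are positive and of class $C^2$ (which is automatic since they are classical (super)solutions of a linear second-order elliptic equation in a neighborhood of $x_0$, by our standing regularity of the coefficients).

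Concretely, I would proceed as follows. Using the positivity of $v_0,v_1$ near $x_0$, the chain rule gives
$$\frac{\nabla v_\ga}{v_\ga} = \ga\,\frac{\nabla v_1}{v_1} + (1-\ga)\,\frac{\nabla v_0}{v_0}$$
at $x_0$. Applying the principal part $-\diver(A\nabla\,\cdot\,)$ of $P$ to $v_\ga$ and expanding via the product rule generates, in addition to the convex combination of the principal parts applied to $v_0$ and $v_1$, a cross term which is precisely the quadratic form $-\ga(1-\ga)\,|\nabla\log(v_0/v_1)|_A^2 = -\ga(1-\ga)W$ from \eqref{eq_W}. Any lower-order (first-order or zeroth-order) terms in $P$ are linear in $v_\ga$ and reproduce themselves cleanly under the factorization $v_\ga = v_1^\ga v_0^{1-\ga}$. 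Collecting everything yields the pointwise identity
$$\frac{\bigl(P+(1-\ga)V_0+\ga V_1\bigr)v_\ga}{v_\ga}(x_0)=\ga\,\frac{(P+V_1)v_1}{v_1}(x_0)+(1-\ga)\,\frac{(P+V_0)v_0}{v_0}(x_0)-\ga(1-\ga)W(x_0).$$

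Finally, since $v_\ga(x_0)>0$, $\ga(1-\ga)\geq 0$, and the coefficients $\ga$, $1-\ga$ are nonnegative, the hypothesis $(P+V_j)v_j(x_0)=0$ (resp.\ $\geq 0$) for $j=0,1$ implies that the right-hand side vanishes (resp.\ is bounded below by $-\ga(1-\ga)W(x_0)$), and rearranging yields \eqref{Pga31}. The only point of substance, which I view as the (mild) main obstacle, is verifying that every step in the derivation of the displayed identity is genuinely pointwise and in particular that the drift and zeroth-order contributions of a possibly non-symmetric $P$ do not produce extra quadratic terms; this is a short but careful bookkeeping exercise completely analogous to the symmetric case treated in \cite{crit2,DFP}.
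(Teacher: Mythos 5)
Your overall route is the same as the paper's: the paper deduces the corollary simply by observing that the proof of Lemma~\ref{lem_conv} is purely local and algebraic, which is precisely the inspection you propose, and your reduction to a pointwise chain/product rule computation (plus the remark that a nonsymmetric drift and the zeroth-order term combine linearly after dividing by the function) is correct in outline. However, your central displayed identity has the wrong sign on the $W$-term, and with that sign the final step fails. Writing $L$ for the principal part and using $\nabla v_\ga/v_\ga=\ga\,\nabla\log v_1+(1-\ga)\,\nabla\log v_0$ at $x_0$, one finds
\begin{equation*}
\frac{Lv_\ga}{v_\ga}-\Big[\ga\,\frac{Lv_1}{v_1}+(1-\ga)\,\frac{Lv_0}{v_0}\Big]
=\ga\,|\nabla\log v_1|_A^2+(1-\ga)\,|\nabla\log v_0|_A^2-|\nabla\log v_\ga|_A^2
=\ga(1-\ga)\,W,
\end{equation*}
by the convexity identity $\ga|a|_A^2+(1-\ga)|b|_A^2-|\ga a+(1-\ga)b|_A^2=\ga(1-\ga)|a-b|_A^2$; hence the correct pointwise identity is
\begin{equation*}
\frac{\big(P+(1-\ga)V_0+\ga V_1\big)v_\ga}{v_\ga}
=\ga\,\frac{(P+V_1)v_1}{v_1}+(1-\ga)\,\frac{(P+V_0)v_0}{v_0}+\ga(1-\ga)\,W,
\end{equation*}
with a \emph{plus} sign, from which \eqref{Pga31} follows at once by moving $\ga(1-\ga)W$ to the left.

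With the minus sign you wrote (both in the prose, where you call the cross term $-\ga(1-\ga)W$, and in the display), ``rearranging'' gives nonnegativity of $\big[P+(1-\ga)V_0+\ga V_1+\ga(1-\ga)W\big]v_\ga$ at $x_0$, which is not \eqref{Pga31} and carries no improvement; moreover, in the equality case the right-hand side of your identity would equal $-\ga(1-\ga)W(x_0)$, not zero, contrary to what you assert. Since the favorable sign of this cross term is the entire content of the supersolution construction (it is what turns convexity into a Hardy-type gain), this is a genuine error that must be corrected; once it is, the remaining points in your argument (the purely pointwise nature of the computation, local $C^2$ regularity of $v_0,v_1$ near $x_0$, and the harmlessness of the lower-order terms) are fine and match the paper.
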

A related -- but weaker -- convexity result is known in the case of $p$-Laplacian type equations:
\begin{lemma}[{\cite[Proposition~4.3]{PT1}}]\label{Prop2_ky3}
 Let $V_0, V_1\in L^\infty_{\mathrm{loc}}(\Omega)$,
$V_0\neq V_1$. For $\ga\in [0,1]$ we denote \be
Q_\ga(u):=Q_{(1-\ga)V_0 +\ga V_1}(u)=(1-\ga)Q_{V_0}(u)+\ga Q_{V_1}(u),\ee and suppose that
$Q_{V_i}\geq 0$ in $\Gw$ for $i=0,1$.

Then  $Q_\ga\geq 0$ in $\Gw$ for all $\ga\in[0,1]$.
Moreover, $Q_\ga$ is subcritical in
$\Omega$ for all $\ga\in(0,1)$.
\end{lemma}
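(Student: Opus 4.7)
The proof has two separable parts, and the whole argument is much shorter than for Theorem~\ref{thm_opt_hardy}.

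\textbf{Nonnegativity.} Because $Q_V$ depends \emph{linearly} on $V$, the identity
\begin{equation*}
\mathcal{Q}_\alpha(\varphi) = (1-\alpha)\mathcal{Q}_{V_0}(\varphi) + \alpha\mathcal{Q}_{V_1}(\varphi) \qquad \forall\varphi\in C_0^\infty(\Omega)
\end{equation*}
is a tautology, the $|\nabla\varphi|^p$-terms recombining with coefficient $(1-\alpha)+\alpha=1$. By Theorem~\ref{AAP} applied to $V_0$ and to $V_1$, each summand is nonnegative on $C_0^\infty(\Omega)$, so $\mathcal{Q}_\alpha\ge 0$; applying Theorem~\ref{AAP} in the reverse direction yields $Q_\alpha\geq 0$ for every $\alpha\in[0,1]$.

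\textbf{Strict subcriticality for $\alpha\in(0,1)$.} I argue by contradiction: assume $Q_\alpha$ is critical in $\Omega$. By the implication (1)$\Rightarrow$(4) in Lemma~\ref{lem_crit}, there is a null sequence $\{\varphi_k\}\subset C_0^\infty(\Omega)$ for $\mathcal{Q}_\alpha$, i.e.
\begin{equation*}
\mathcal{Q}_\alpha(\varphi_k)\to 0, \qquad \int_B|\varphi_k|^p\dnu \asymp 1,
\end{equation*}
for some $B\Subset\Omega$. Since $\alpha$ and $1-\alpha$ are both strictly positive and each $\mathcal{Q}_{V_j}(\varphi_k)\ge 0$ by the first part of the proof, the decomposition above forces $\mathcal{Q}_{V_0}(\varphi_k)\to 0$ and $\mathcal{Q}_{V_1}(\varphi_k)\to 0$ \emph{separately}. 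Thus the \emph{same} sequence $\{\varphi_k\}$ is a null sequence for both $\mathcal{Q}_{V_0}$ and $\mathcal{Q}_{V_1}$. By the reverse implication (4)$\Rightarrow$(1) of Lemma~\ref{lem_crit}, both $Q_{V_0}$ and $Q_{V_1}$ are critical in $\Omega$, with Agmon ground states $v_0$ and $v_1$ (globally positive solutions of $Q_{V_j}(u)=0$).

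\textbf{Unifying the two ground states.} Now I invoke the convergence theorem for null sequences of critical quasilinear operators from \cite{PT1}: after renormalization ensuring $\int_B|\varphi_k|^p\dnu = 1$, a subsequence of $\{\varphi_k\}$ converges locally uniformly in $\Omega$ to a positive limit $w$, and this limit must coincide, up to a positive multiplicative constant, with the Agmon ground state of the underlying critical operator. Applying this result to $\mathcal{Q}_{V_0}$ and to $\mathcal{Q}_{V_1}$ with the same subsequence, $w$ equals $v_0$ up to a scalar and $v_1$ up to a scalar. Hence $v_0 = c\, v_1$ for some constant $c>0$. The $(p-1)$-homogeneity of $\Delta_p$ then yields
\begin{equation*}
V_0\, v_0^{p-1} = \Delta_p v_0 = c^{p-1}\,\Delta_p v_1 = c^{p-1} V_1\, v_1^{p-1} = V_1\, v_0^{p-1},
\end{equation*}
and since $v_0 > 0$ throughout $\Omega$ this forces $V_0 \equiv V_1$, contradicting the hypothesis $V_0\neq V_1$. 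Therefore $Q_\alpha$ is subcritical in $\Omega$ for every $\alpha\in(0,1)$.

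\textbf{Main obstacle.} Everything but one step is formal: convexity of $\mathcal{Q}_V$ in $V$, the AAP principle, Lemma~\ref{lem_crit}, and $p$-homogeneity. The delicate ingredient is the locally-uniform convergence of a (normalized) null sequence to the Agmon ground state of a critical quasilinear operator; this is the quasilinear analogue of a standard linear result and is the content of the criticality theory of \cite{PT1}, which I would cite rather than reprove.
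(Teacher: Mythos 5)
The paper itself does not prove this lemma but quotes it from \cite{PT1}, and your argument is essentially the proof given there: the linearity of $V\mapsto \mathcal{Q}_V$ gives nonnegativity via Theorem~\ref{AAP}, and for subcriticality the key observation that a null sequence for $\mathcal{Q}_\alpha$ (with $0<\alpha<1$ and both $\mathcal{Q}_{V_j}\geq 0$) is simultaneously a null sequence for $\mathcal{Q}_{V_0}$ and $\mathcal{Q}_{V_1}$, forcing both to be critical with one and the same ground state, whence $(V_0-V_1)v^{p-1}=0$ a.e.\ and $V_0=V_1$, a contradiction. The only minor inaccuracy is the phrase ``converges locally uniformly'': the convergence of a normalized null sequence to the ground state established in \cite{PT1} is in $L^p_{\mathrm{loc}}$ (up to a subsequence), not locally uniform, but since you only use the identification of the limit this does not affect the argument.
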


\begin{remark}\label{rem_9}{\em
Lemma~\ref{Prop2_ky3} does not provide us with an explicit nonzero Hardy-weight $W_\ga$ for $Q_\ga$, although the subcriticality of $Q_\ga$ ensures the existence of a strictly positive weight.
 }
\end{remark}
The supersolution construction has been extended to the $p$-Laplacian itself by several authors, with $v_\alpha:=(v_1)^{\ga}(v_0)^{1-\ga}$,  in the particular case where $v_0$ is a positive $p$-harmonic function, and $v_1=\mathbf{1}$ (see for example \cite{AS,DAD,DFP} and references therein). In particular, the following Caccioppoli-type inequality has been obtained in \cite{DFP}:
\begin{Pro}[{\cite[Proposition 13.11]{DFP}}]\label{supersolution p-Laplacian}
Assume that $\mathcal{G}$ is a positive supersolution (resp., solution) of the equation $-\Delta_p(w)=0$ in $\Gw$. Then for $\alpha\in (0,1)$, $\mathcal{G}^\alpha$ is a positive  supersolution (resp., solution) of the equation $Q_{-W_\alpha}(w)=0$ in $\Gw$, where
$$W_\alpha:=\alpha^{p-1}(1-\alpha)(p-1)\left|\frac{\nabla \mathcal{G}}{\mathcal{G}}\right|^p.$$
In particular,  by taking the optimal value $\alpha=\frac{p-1}{p}$ we obtain the following logarithmic Caccioppoli inequality:
\begin{equation}\label{cacc}
\int_\Omega |\nabla\varphi|^p\dnu\geq \left(\frac{p-1}{p}\right)^p\int_\Omega \left|\frac{\nabla v}{v}\right|^p |\varphi|^p\dnu \qquad \forall\varphi\in C_0^\infty(\Omega),
\end{equation}
where $v$ is any positive $p$-superharmonic function in $\Gw$.
\end{Pro}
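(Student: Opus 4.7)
The plan is to apply Lemma~\ref{weak_lapl} to the composition $f(\mathcal{G}) = \mathcal{G}^\alpha$ and then invoke Theorem~\ref{AAP} (Allegretto--Piepenbrink) to convert the pointwise supersolution inequality $Q_{-W_\alpha}(\mathcal{G}^\alpha) \geq 0$ into the integral Caccioppoli inequality \eqref{cacc}, the sharp constant being obtained by optimizing over $\alpha$.

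For the computation, I would take $f(t) := t^\alpha$ on $(0, \infty)$, so that $f'(\mathcal{G}) = \alpha \mathcal{G}^{\alpha-1} > 0$ and $f''(\mathcal{G}) = \alpha(\alpha-1)\mathcal{G}^{\alpha-2}$. Substituting into formula~\eqref{weak_eq} and simplifying the exponents (using $(\alpha-1)(p-2)+\alpha-2 = \alpha(p-1)-p$ and $(\alpha-1)(p-2)+\alpha-1 = (\alpha-1)(p-1)$) gives
\[
-\Delta_p(\mathcal{G}^\alpha) = \alpha^{p-1}(1-\alpha)(p-1)\,\mathcal{G}^{\alpha(p-1)-p}|\nabla \mathcal{G}|^p \;-\; \alpha^{p-1}\mathcal{G}^{(\alpha-1)(p-1)}\Delta_p(\mathcal{G}).
\]
Setting $v := \mathcal{G}^\alpha$, one has $v^{p-1} = \mathcal{G}^{\alpha(p-1)}$, so the first term on the right reads exactly $W_\alpha v^{p-1}$. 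Since $-\Delta_p(\mathcal{G}) \geq 0$ by hypothesis and the coefficient $\alpha^{p-1}\mathcal{G}^{(\alpha-1)(p-1)}$ is strictly positive, the second term is nonnegative and may be dropped, producing $-\Delta_p v - W_\alpha v^{p-1} \geq 0$, i.e.\ $Q_{-W_\alpha}(v) \geq 0$ in $\Omega$. When $\mathcal{G}$ is $p$-harmonic, the dropped term vanishes and equality holds.

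Given the positive supersolution $v$ of $Q_{-W_\alpha}$, Theorem~\ref{AAP} immediately yields
\[
\int_\Omega |\nabla \varphi|^p\dnu \;\geq\; \alpha^{p-1}(1-\alpha)(p-1)\int_\Omega \left|\frac{\nabla \mathcal{G}}{\mathcal{G}}\right|^p |\varphi|^p\dnu \qquad \forall \varphi \in C_0^\infty(\Omega).
\]
Optimizing $h(\alpha) := \alpha^{p-1}(1-\alpha)$ over $(0,1)$ by solving $h'(\alpha) = \alpha^{p-2}[(p-1)-p\alpha] = 0$ gives $\alpha_\ast = (p-1)/p$ with $(p-1)h(\alpha_\ast) = ((p-1)/p)^p$, yielding \eqref{cacc}. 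The main technical obstacle is that Lemma~\ref{weak_lapl} requires $u \in C^{1,\beta}_{\mathrm{loc}}$, which is valid for $p$-harmonic $\mathcal{G}$ by Tolksdorf--DiBenedetto regularity but not automatic for a merely $p$-superharmonic $\mathcal{G}$; in that case one should either truncate $\mathcal{G}$ on compact subintervals of $(0,\infty)$, approximate by smooth supersolutions via obstacle-problem techniques, and pass to the limit, or else carry out the manipulation directly at the weak level by inserting $\alpha^{p-1}\mathcal{G}^{(\alpha-1)(p-1)}\varphi$ as a test function in the weak formulation of $-\Delta_p \mathcal{G} \geq 0$ and rearranging via the product and chain rules, which is essentially what the proof of Lemma~\ref{weak_lapl} already does.
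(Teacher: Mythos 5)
Your proposal is correct and follows essentially the same route as the paper: the paper's proof also consists of applying Lemma~\ref{weak_lapl} with $f(s)=s^\alpha$ and then invoking the Allegretto--Piepenbrink theorem (Theorem~\ref{AAP}), with the optimization over $\alpha$ giving the constant $\left(\frac{p-1}{p}\right)^p$. You merely spell out the exponent bookkeeping and the regularity caveat for merely $p$-superharmonic $\mathcal{G}$ (handled by approximation or by testing the weak formulation directly), which the paper leaves implicit.
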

\begin{proof}
The first assertion of the proposition follows from Lemma~\ref{weak_lapl} and in particular from \eqref{weak_eq} with $f(s):=s^\ga$.
Hence using the Allegretto-Piepenbrink theorem \ref{AAP}, we obtain \eqref{cacc}.
\end{proof}
\begin{remark}\label{rem_Cacc}{\em
Inequality \eqref{cacc} has been independently proved in \cite{DAD} by L.~D'Ambrosio and S.~Dipierro, using a different approach.
 }
\end{remark}
\begin{example}\label{ex4} {\em
Consider Proposition~\ref{supersolution p-Laplacian} in the particular case $\Omega=\mathbb{R}^n\setminus \{0\}$, $p\neq n$, and $\mathcal{G}(x)=|x|^{\frac{p-n}{p-1}}$. Then \eqref{cacc}  clearly implies the classical Hardy inequality (with the best constant):

\begin{equation}\label{hardy_cla}
 \int_{\R^n\setminus\{0\}} |\nabla \vgf|^p\dx  \geq \left|\frac{p-n}{p}\right|^p\int_{\R^n\setminus \{0\}} \frac{|\vgf(x)|^p}{|x|^p}\,\mathrm{d}x \qquad \forall \vgf\in \core.
\end{equation}
 }
\end{example}
We will see later that Proposition \ref{supersolution p-Laplacian} yields an optimal Hardy weight if $\mathcal{G}$ further satisfies either assumption \eqref{assumpt_7} or \eqref{assumpt_8} with $\gg=0$ (see Theorem \ref{thm_opt_hardy}). However, as we shall see in Section \ref{sec34}, this supersolution construction does not provide us with an optimal Hardy weight if $\Omega$ is a bounded, $C^{1,\alpha}$-domain if $\mathcal{G}$ satisfies \eqref{assumpt_8} with $\gg>0$. In this case and also in other cases (see Section~\ref{sec_ann_ext}), an \textit{optimal} Hardy weight will be obtained using a different supersolution construction given by the following proposition.
\begin{Pro}\label{super_p>n}
Suppose that $\mathcal{G}$ is a $C^{1,\gb}$-positive supersolution (resp., solution) of $-\Delta_pw=0$ in $\Omega$ satisfying $0\leq m<\mathcal{G}<M<\infty$ in $\Omega$, where $0<\gb\leq 1$.

\noindent Set $v_\ga:=[(\mathcal{G}-m)(M-\mathcal{G})]^\alpha$, and define
\begin{multline}\label{wgg1gg2}
W_\alpha:=
(p\!-\!1)\alpha^{p\!-\!1}\!\!\left|\frac{\nabla \mathcal{G}}{v_1}\right|^p\!\!|m\!+\!M\!-\!2\mathcal{G}|^{p\!-\!2}\!\!\left[2(2\alpha\!-\!1)v_1\!+\!
(1\!-\!\alpha)(M\!-\!m)^2\right]\!\geq \!0.
\end{multline}
Then for $\ga$ satisfying
$$\ga\in \left\{
             \begin{array}{ll}
               \,[1/2,1] & \mbox{ if } m>0, \\[2mm]
               \,[0,1] & \mbox{ if } m=0,
             \end{array}
           \right.
 $$
the function $v_\ga$ is a positive supersolution (resp., solution) of the equation $Q_{-W_\alpha}(w)=0$ in $\Gw$.

\noindent In particular, let $\alpha=(p-1)/p$, and assume that either $\alpha=(p-1)/p\geq 1/2$, or $m=0$. Define
\be\label{wgg1gg21}
W:=W_{\frac{p-1}{p}}=\left(\frac{p-1}{p}\right)^p\left|\frac{\nabla \mathcal{G}}{v_1}\right|^p|m+M-2\mathcal{G}|^{p-2}\left[2(p-2)v_1+(M-m)^2\right].
\ee
Then $$v:=v_{\frac{p-1}{p}}=[(\mathcal{G}-m)(M-\mathcal{G})]^{\frac{p-1}{p}}$$ is a positive solution (resp., supersolution) of $Q_{-W}(w)=0$ in $\Omega$, and the following $L^p$-Hardy type inequality holds:
\begin{equation}\label{Hardy_p>n}
\int_{\Omega}|\nabla \vgf|^p\dnu \geq \int_{\Omega}W|\vgf|^p\dnu \qquad \forall \vgf\in C_0^\infty(\Omega).
\end{equation}
\end{Pro}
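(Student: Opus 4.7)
The plan is to derive the claim from Lemma~\ref{weak_lapl} applied to the composition $v_\alpha = f(\mathcal{G})$, where $f(s) := h(s)^\alpha$ with $h(s) := (s-m)(M-s)$. A straightforward differentiation gives
\[
f'(s) = \alpha\, h(s)^{\alpha-1}(m+M-2s), \qquad f''(s) = \alpha\, h(s)^{\alpha-2}\bigl[(\alpha-1)(m+M-2s)^2 - 2h(s)\bigr],
\]
so substituting $u=\mathcal{G}$ into \eqref{weak_eq} yields, in the weak sense,
\[
-\Delta_p v_\alpha \;=\; -(p-1)|f'(\mathcal{G})|^{p-2}f''(\mathcal{G})|\nabla \mathcal{G}|^p \;+\; |f'(\mathcal{G})|^{p-2}f'(\mathcal{G})(-\Delta_p \mathcal{G}).
\]
The proof then splits into two tasks: (i) identifying the first summand pointwise with $W_\alpha\, v_\alpha^{p-1}$, and (ii) handling the sign of the residual term involving $-\Delta_p \mathcal{G}$.

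For (i), factoring $\alpha^{p-1}|m+M-2\mathcal{G}|^{p-2}$ and tracking exponents shows that the powers of $h(\mathcal{G})$ in the first summand collapse to $h(\mathcal{G})^{\alpha(p-1)-p} = v_\alpha^{p-1}/v_1^p$. The elementary identity $(m+M-2s)^2 = (M-m)^2 - 4h(s)$ then converts the second-derivative bracket $(1-\alpha)(m+M-2\mathcal{G})^2 + 2h(\mathcal{G})$ into $(1-\alpha)(M-m)^2 + 2(2\alpha-1)v_1$, producing precisely the expression for $W_\alpha$ given in \eqref{wgg1gg2}. In parallel I need to verify $W_\alpha \geq 0$ under the stated constraints on $\alpha$: when $\alpha\in[1/2,1]$ the bracket is nonnegative term-by-term, while when $m=0$ the elementary bound $v_1 = \mathcal{G}(M-\mathcal{G}) \leq M^2/4$ allows the term $(1-\alpha)M^2$ to absorb the negative contribution $2(2\alpha-1)v_1$ for every $\alpha\in[0,1]$, leaving a nonnegative remainder at least $M^2/2$.

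For (ii), when $\mathcal{G}$ is a $p$-harmonic solution the residual vanishes identically, so the identity $-\Delta_p v_\alpha = W_\alpha v_\alpha^{p-1}$ holds pointwise and $v_\alpha$ is a solution of $Q_{-W_\alpha}(w)=0$; specializing to $\alpha = (p-1)/p$ gives the asserted $v$ and $W$. In the genuine supersolution case, $-\Delta_p \mathcal{G} \geq 0$ in the weak sense, and the residual contributes nonnegatively provided $|f'(\mathcal{G})|^{p-2}f'(\mathcal{G}) = \alpha^{p-1} h(\mathcal{G})^{(\alpha-1)(p-1)}|m+M-2\mathcal{G}|^{p-2}(m+M-2\mathcal{G})$ has the correct sign after testing against a nonnegative $\varphi \in C_0^\infty(\Omega)$; the restrictions on $\alpha$ together with the range of $\mathcal{G}$ are what make this work. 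Finally, the Hardy-type inequality \eqref{Hardy_p>n} drops out from Theorem~\ref{AAP}: once $v$ is a positive supersolution of $Q_{-W}(w) = 0$ in $\Omega$, nonnegativity of $\mathcal{Q}_{-W}$ on $C_0^\infty(\Omega)$ is equivalent to exactly \eqref{Hardy_p>n}. The main obstacle I anticipate is step (ii): the pointwise sign of $f'(\mathcal{G})$ flips across the level set $\{\mathcal{G} = (m+M)/2\}$, so if the range of $\mathcal{G}$ straddles this midpoint, the residual cannot be handled by a pointwise inequality and one is forced either into an integrated/distributional argument or an approximation scheme replacing $\mathcal{G}$ by exact $p$-harmonic functions on exhausting subdomains.
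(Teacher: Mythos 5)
Your approach matches the paper's proof exactly: apply Lemma~\ref{weak_lapl} with $f(s) = [(s-m)(M-s)]^\alpha$, identify the $f''$-term with $W_\alpha v_\alpha^{p-1}$ via the identity $(m+M-2s)^2 = (M-m)^2 - 4h(s)$, verify nonnegativity of $W_\alpha$ under the stated $\alpha$-restrictions (including the bound $v_1 \leq M^2/4$ when $m=0$, which does give remainder $\geq M^2/2$ as you claim), and invoke the Allegretto--Piepenbrink theorem (Theorem~\ref{AAP}) for the Hardy inequality. Your exponent bookkeeping in step (i), in particular $(\alpha-1)(p-2)+\alpha-2 = \alpha(p-1)-p$ so that the powers of $h(\mathcal{G})$ collapse to $v_\alpha^{p-1}/v_1^p$, is correct.

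Your concern in step (ii) about the residual $|f'(\mathcal{G})|^{p-2}f'(\mathcal{G})\,\Delta_p\mathcal{G}$ in the genuine supersolution case is well-founded. Since $f'(\mathcal{G}) = \alpha\,h(\mathcal{G})^{\alpha-1}(m+M-2\mathcal{G})$ changes sign across the level set $\{\mathcal{G}=(m+M)/2\}$, obtaining $-\Delta_p v_\alpha \geq W_\alpha v_\alpha^{p-1}$ from $-\Delta_p\mathcal{G}\geq 0$ requires $f'(\mathcal{G})\geq 0$ on the support of the measure $-\Delta_p\mathcal{G}$, and the hypotheses do not guarantee $\mathcal{G}\leq (m+M)/2$ there; the stated $\alpha$-restrictions do not help. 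The paper's own proof of Proposition~\ref{super_p>n} simply writes the $(\geq)$ case of Lemma~\ref{weak_lapl} without addressing this sign, so the same gap is present there. It is, however, harmless for the paper's main results: in Theorem~\ref{thm_opt_hardy}, Theorem~\ref{thm_opt_hardym}, and the remark following Proposition~\ref{super_p>n}, $\mathcal{G}$ is always a genuine $p$-harmonic function (a Green function or Dirichlet solution), so $\Delta_p\mathcal{G}=0$ and the residual vanishes identically, which is exactly the case your proposal handles cleanly.
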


\begin{proof}
Let $0\leq \ga \leq 1$. By our assumption, $\mathcal{G}\in C^{1,\gb}_{\mathrm{loc}}(\Omega)$ for some $\gb\in(0,1]$. Moreover, the function $f(s)=\left[(s-m)(M-s)\right]^{\alpha}$ belongs to $C^2\big((0,\gg)\big)$. Consequently, one may apply Lemma \ref{weak_lapl} with $\mathcal{G}$ and $f$ to obtain that in the weak sense,
$$ -\Delta_p(v_\ga)\; \displaystyle{\substack{= \\[1mm] (\geq)}}\; -(p-1)|f'(\mathcal{G})|^{p-2}|\nabla \mathcal{G}|^pf''(\mathcal{G}) = W_\ga v_\ga^{p-1}\qquad \mbox{in } \Omega. $$
Therefore, $v_\ga\!=\!f(\mathcal{G})$ is a positive (super)solution of the equation $Q_{-W_\ga}(w)\!=\!0$ in $\Omega$, and the Allegretto-Piepenbrink type theorem (Theorem \ref{AAP})
implies
\begin{equation*}
\int_{\Omega}|\nabla \vgf|^p\dnu \geq \int_{\Omega}W_\ga|\vgf|^p\dnu \qquad \forall \vgf\in C_0^\infty(\Omega).
\end{equation*}
In particular, for $\alpha=(p-1)/p$ we have \eqref{Hardy_p>n}.
\end{proof}
\begin{Rem}
{\em
Let $\Gw_1\Subset\Gw_2\subset \R^n$ be two open sets. Suppose that $\Gw:=\Gw_2\setminus\Gw_1$ is a $C^{1,\gb}$-bounded annular-type domain such that $\pd \Gw$ is the union of  $\Gg_1=\pd\Gw_1$, and $\Gg_2=\pd\Gw_2$.  Let $\mathcal{G}$ be the solution of the Dirichlet problem
$$\left\{
    \begin{array}{ll}
      -\Gd_p(u)=0 & \hbox{in } \Gw,\\[2mm]
      u=m & \hbox{on } \Gg_1,\\[2mm]
 u=M & \hbox{on } \Gg_2,
    \end{array}
  \right.
$$
where $0\leq m < M$. Then $\mathcal{G}$ satisfies the assumptions of Proposition~\ref{super_p>n}.

Moreover, if $p>n$, $\Gw$ is a $C^{1,\gb}$-bounded domain with $0<\gb\leq 1$, and $\mathcal{G}:=G^\Gw(\cdot,0)$ is the positive minimal $p$-Green function of the operator $-\Delta_p$ in $\Gw$ with a pole at $0$. Then $\mathcal{G}$ satisfies the assumptions of Proposition~\ref{super_p>n} in $\Gw^\star$, with $m:=\lim_{x\to \pd \Gw}\mathcal{G}(x)=0$, and $M:=\lim_{x\to 0}\mathcal{G}(x)$.
}
\end{Rem}

\begin{Rem}
{\em If in Proposition~\ref{super_p>n} the supersolution $\mathcal{G}$ is unbounded and satisfies $\mathcal{G}>m$ in $\Omega$, then one should simply consider the supersolution construction with $v_\ga:= (\mathcal{G}-m)^\ga$ with $0\leq \ga\leq 1$ to obtain the Hardy-type inequality
\begin{equation}\label{Hardy_p>n12}
\int_{\Omega}|\nabla \vgf|^p\dnu \geq \left(\frac{{p-1}}{p}\right)^p\int_{\Omega} \left|\frac{\nabla \mathcal{G}}{\mathcal{G}-m}\right|^p |\vgf|^p\dnu \qquad \forall \vgf\in C_0^\infty(\Omega)
\end{equation}
(cf. Proposition~\ref{supersolution p-Laplacian}).
}
\end{Rem}

\begin{Rem}
{\em
A new phenomenon appears in Proposition~\ref{super_p>n}: if $p\neq 2$,  then the weight $W_\ga$ {\em necessarily vanishes} in $\Omega$. Indeed, $W_\ga=0$ on the set $$\left\{x\in \Gw\mid \mathcal{G}(x)=\frac{m+M}{2}\right\}.$$
}

\end{Rem}


\mysection{Proof of theorems~\ref{thm_opt_hardy}}\label{sec_pf}
The present section is devoted to the proof of the main result of the paper, namely Theorem~\ref{thm_opt_hardy}, that deals with the case $V=0$, and  claims the optimality of the supersolution construction for the $p$-Laplacian in $\Gw^\star$. We divide the proofs into three parts: the criticality of $\mathcal{Q}_{-W}$, the optimality of the constant near infinity and zero, and finally the null-criticality of $\mathcal{Q}_{-W}$.

\subsection{Criticality}
In the present subsection, we prove the criticality of $\mathcal{Q}_{-W}$. We divide the proof into two parts, according to which of the assumptions \eqref{assumpt_7}, \eqref{assumpt_8} is satisfied. We start by showing the criticality of $\mathcal{Q}_{-W}$ if either \eqref{assumpt_7} or \eqref{assumpt_8} with $\gg=0$ is  satisfied. This is a consequence of the following proposition:
\begin{Pro}\label{pro_crit}
Assume that in Theorem~\ref{thm_opt_hardy} the positive $p$-harmonic function $\mathcal{G}$ satisfies
\begin{equation}\label{assumpt_71}
\left\{
  \begin{array}{ll}
\displaystyle{\lim_{x\to 0} \mathcal{G}=\infty \;\mbox{ and }\; \lim_{x\to \bar{\infty}}} \mathcal{G}=0 & \hbox{ if } 1<p\leq n,\\[6mm]
\displaystyle{\lim_{x\to 0} \mathcal{G}=0 \;\;\;\mbox{ and }\; \lim_{x\to \bar{\infty}}} \mathcal{G}=\infty & \hbox{ if } p>n.
  \end{array}
\right.
\end{equation}
Then the functional $\mathcal{Q}_{-W}$ is critical in $\Omega^\star$.
\end{Pro}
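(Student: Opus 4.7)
The approach is to construct an explicit null-sequence for $\mathcal{Q}_{-W}$ in $\Gw^\star$ and then invoke Lemma~\ref{lem_crit}. By Proposition~\ref{supersolution p-Laplacian} with $\ga=(p-1)/p$, the function $v:=\mathcal{G}^{(p-1)/p}$ is a positive solution of $Q_{-W}(w)=0$ in $\Gw^\star$, so $\mathcal{Q}_{-W}\geq 0$, and Lemma~\ref{simpl_energy} applied to $Q_{-W}$ gives the ground-state-type equivalence
\[
\mathcal{Q}_{-W}(\vgf)\asymp \mathcal{Q}_{\mathrm{sim}}(\vgf/v)\qquad \forall\vgf\in C_0^\infty(\Gw^\star).
\]
Writing $\vgf_k:=v\,\eta_k$, the task reduces to producing cutoffs $\eta_k\geq 0$ with $\mathcal{Q}_{\mathrm{sim}}(\eta_k)\to 0$ while $\int_B v^p\eta_k^p\dnu\asymp 1$ for some fixed $B\Subset \Gw^\star$.

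The natural candidate is the \emph{logarithmic cutoff} $\eta_k(x):=F(k^{-1}\log\mathcal{G}(x))$, where $F\in C_c^\infty(\R)$ satisfies $0\leq F\leq 1$, $F\equiv 1$ on $[-1,1]$ and $\supp F\subset[-2,2]$. Under either alternative in \eqref{assumpt_71}, $\mathcal{G}$ tends to $0$ at one end of $\Gw^\star$ and to $\infty$ at the other, so $\supp\eta_k\subset\{e^{-2k}\leq\mathcal{G}\leq e^{2k}\}\Subset\Gw^\star$. Since $\mathcal{G}\in C^{1,\ga}_{\mathrm{loc}}$, $\eta_k$ is $C^{1,\ga}$; a standard mollification on the fixed compact support produces genuine $C_0^\infty$ test functions (and in any case the remark following \eqref{weak_def} allows $C_0^1$-test functions).

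The computational heart uses the coarea formula of Proposition~\ref{pro_key}. From
\[
|\nabla\eta_k|^p=k^{-p}|F'(k^{-1}\log\mathcal{G})|^p\mathcal{G}^{-p}|\nabla\mathcal{G}|^p,\qquad |\nabla v|^p=\Bigl(\tfrac{p-1}{p}\Bigr)^p\mathcal{G}^{-1}|\nabla\mathcal{G}|^p,
\]
and the substitution $s=k^{-1}\log t$ in \eqref{IPP2}, one finds
\[
X(\eta_k)=\tilde{c}\,k^{1-p}\|F'\|_p^p,\qquad Y(\eta_k)=\Bigl(\tfrac{p-1}{p}\Bigr)^p\tilde{c}\,k\,\|F\|_p^p.
\]
Inserting into Lemma~\ref{simple_est} yields $\mathcal{Q}_{\mathrm{sim}}(\eta_k)\leq Ck^{1-p}$ for $1<p\leq 2$, while for $p>2$ the extra term satisfies $X^{2/p}Y^{(p-2)/p}\asymp k^{-1}$; in either regime $\mathcal{Q}_{-W}(\vgf_k)\to 0$. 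Fixing any $B\Subset \Gw^\star$, both $\mathcal{G}$ and $v$ are bounded and bounded below positively on $\bar B$, so for $k$ large $\eta_k\equiv 1$ on $B$ and $\int_B|\vgf_k|^p\dnu=\int_B v^p\dnu\asymp 1$. Hence $\{\vgf_k\}$ is a null-sequence for $\mathcal{Q}_{-W}$ in $\Gw^\star$, and Lemma~\ref{lem_crit} delivers the criticality.

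The main technical obstacle is the coarea-type change of variables at the level of cutoffs built from the merely $C^{1,\ga}$ function $\mathcal{G}$; this is precisely what Proposition~\ref{pro_key} was engineered to handle, via the Bojarski-Haj{\l}asz-Strzelecki Sard-type theorem that matches level sets with reduced boundaries almost everywhere.
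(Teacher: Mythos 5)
Your proposal is correct and follows essentially the same route as the paper's proof: you build a null-sequence of the form $v\eta_k$ with $v=\mathcal{G}^{(p-1)/p}$ and a logarithmic cutoff, pass to the simplified energy via Lemma~\ref{simpl_energy}, use the coarea formula of Proposition~\ref{pro_key} to evaluate $X$ and $Y$ with the substitution $s=k^{-1}\log t$, and close with the estimate of Lemma~\ref{simple_est}. The paper's cutoff is the explicit piecewise-linear ramp $\varphi_n$ in the variable $v$, whereas you use a smooth bump $F$ in the variable $\log\mathcal{G}$; these differ only cosmetically and yield the same rates $X\asymp k^{1-p}$, $Y\asymp k$, hence $\mathcal{Q}_{\mathrm{sim}}(\eta_k)\to 0$.
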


\begin{proof}

Let $v:=\mathcal{G}^{\frac{p-1}{p}}$. Proposition~\ref{supersolution p-Laplacian} implies that $v$ is a positive solution of the equation $-\Gd_p(w)-W|w|^{p-2}w=0$ in $\Omega^\star$. We construct a null-sequence for the functional $\mathcal{Q}_{-W}$ in a similar fashion as in the proof of \cite[Theorem~1.3]{PS}.
Let
$$\varphi_n(t):=\left\{
  		\begin{array}{lll}
  			 0                         &\;\; 0\leq t\leq \frac{1}{n^2},\\[2mm]
			  2+\dfrac{\log t}{\log n} &\;\; \frac{1}{n^2}\leq t\leq \frac{1}{n},\\[2mm]
			  1                        & \;\;  \frac{1}{n}\leq t\leq n,\\[2mm]
			  2-\dfrac{\log t}{\log n} &\;\; n\leq t\leq n^2,\\[2mm]
			  0                        &\;\; t\geq n^2.
		\end{array}
\right.$$
Set $w_n:=\varphi_n(v)$, and consider the sequence $\{vw_n\}_{n\in\mathbb{N}}$.
\medskip

\noindent{\bf Claim:}  $\{vw_n\}$ is a null-sequence for the functional $\mathcal{Q}_{-W}$.

\medskip

Set $B:=\{x\in \Gw^\star\mid 1< v< 2\}$, then $\bar{B}$ is compact in $\Omega^\star$.
By Lemma \ref{simpl_energy} we have
$$\mathcal{Q}_{-W}(vw)\asymp \mathcal{Q}_{\mathrm{sim}}(w),$$
where $\mathcal{Q}_{\mathrm{sim}}$ is the simplified energy for the functional $\mathcal{Q}_{-W}$ (see \eqref{eq_simp}).
Thus, we need to prove that
\begin{equation}\label{eq_null}
\lim_{n\to\infty}\frac{\mathcal{Q}_{\mathrm{sim}}(w_n)}{\int_{B}(vw_n)^p\dnu}=0.
\end{equation}
Set $$X_n:=X(w_n)=\int_{\Omega^\star}v^p|\nabla w_n|^p\dnu,\quad \mbox{and } Y_n:=Y(w_n)=\int_{\Omega^\star}w_n^p|\nabla v|^p\dnu.$$
Using the coarea formula \eqref{IPP}, we obtain
\begin{multline*}
X_n=c_1\int_{\Omega^\star}v^{p}|\varphi_n'(v)|^p |\nabla v|^p\dnu=
c\int_0^\infty (t|\varphi_n'(t)|)^p\frac{\mathrm{d}t}{t}=\\[5mm]
 c\left(\frac{1}{\log n}\right)^p\left(\int_{\frac{1}{n^2}}^{\frac{1}{n}}\frac{\mathrm{d}t}{t}+\int_n^{n^2}\frac{\mathrm{d}t}{t}\right)
 =2c\left(\frac{1}{\log n}\right)^{p-1}.
\end{multline*}
Using again \eqref{IPP}, we get

\begin{equation*}
Y_n=\int_{\Omega^\star}w_n^p|\nabla v|^p\dnu =c\int_0^\infty |\varphi_n(t)|^p\frac{\mathrm{d}t}{t}
 \asymp \int_{\frac{1}{n}}^{n}\frac{\mathrm{d}t}{t}
 \asymp \log n.
\end{equation*}
On the other hand, we clearly have
$$\int_B (vw_n)^p\dnu\asymp 1.$$
Recall that by \eqref{est_qsim}, the simplified energy can be estimated from above by
$$\mathcal{Q}_{\mathrm{sim}}(w_n)\leq C\left\{
  		\begin{array}{ll}
  			X_n & \mbox{ if } 1<p\leq 2,\\\\
			X_n+\left(\frac{X_n}{Y_n}\right)^{2/p}Y_n & \mbox{ if } p>2.
		\end{array}
\right.$$
Therefore, $\lim_{n\to\infty}\mathcal{Q}_{\mathrm{sim}}(w_n) =0$, and  \eqref{eq_null} is proved.  Thus, $\{vw_n\,:\,n\in \mathbb{N}\}$ is a null-sequence for the functional $\mathcal{Q}_{-W}$, and $\mathcal{Q}_{-W}$ is critical in $\Gw^\star$.
\end{proof}
Next, we prove the criticality of $\mathcal{Q}_{-W}$ if assumption \eqref{assumpt_8} with $\gg>0$ is satisfied:
\begin{Pro}\label{pro_crit2}
Assume that in Theorem~\ref{thm_opt_hardy} $p>n$, and the positive $p$-harmonic function $\mathcal{G}$ satisfies
\begin{equation}\label{assumpt_81}
  \lim_{x\to 0} \mathcal{G}=\gg>0 \;\mbox{ and }\; \lim_{x\to \bar{\infty}} \mathcal{G}=0.
  \end{equation}
 Then the functional $\mathcal{Q}_{-W}$ is critical in $\Omega^\star$.

\end{Pro}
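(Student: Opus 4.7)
The plan is to mirror the construction in Proposition~\ref{pro_crit}, but with a double-sided cutoff adapted to the new geometry: under \eqref{assumpt_81}, the function $v := [\mathcal{G}(\gamma-\mathcal{G})]^{(p-1)/p}$ vanishes at both ends of $\Omega^\star$, namely at the origin (where $\mathcal{G}\to\gamma$) and at $\bar{\infty}$ (where $\mathcal{G}\to 0$). After noting that the strong maximum principle for $p$-harmonic functions together with the prescribed boundary limits force $0<\mathcal{G}<\gamma$ throughout $\Omega^\star$, Proposition~\ref{super_p>n} applied with $m=0$, $M=\gamma$, $\alpha=(p-1)/p$ shows that $v$ is a positive solution of $Q_{-W}(w)=0$ in $\Omega^\star$. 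I will exhibit a null-sequence of the form $\{vw_n\}_{n\in\N}$ with $w_n:=\varphi_n(\mathcal{G})$, where $\varphi_n:(0,\gamma)\to[0,1]$ is a piecewise log-linear cutoff equal to $1$ on $[1/n,\gamma-1/n]$, transitioning log-linearly on the intervals $[1/n^2,1/n]$ and $[\gamma-1/n,\gamma-1/n^2]$, and vanishing outside $[1/n^2,\gamma-1/n^2]$.

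By Lemma~\ref{simpl_energy} it suffices to show that $\mathcal{Q}_{\mathrm{sim}}(w_n)\to 0$ while $\int_B (vw_n)^p\dnu \asymp 1$ for some open set $B\Subset\Omega^\star$. For the latter I take $B:=\{x\in\Omega^\star\mid \gamma/4 < \mathcal{G}(x) < 3\gamma/4\}$; the limits of $\mathcal{G}$ at the two ends guarantee that $\overline{B}$ is compact in $\Omega^\star$, while for $n$ large $\varphi_n\equiv 1$ on $B$ and $v$ is uniformly bounded between two positive constants there. The main computation is for
\[
X_n:=\int_{\Omega^\star}v^p|\nabla w_n|^p\dnu, \qquad Y_n:=\int_{\Omega^\star}w_n^p|\nabla v|^p\dnu,
\]
both functions of $\mathcal{G}$ alone. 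Using the explicit identity $|\nabla v|^p=\bigl(\tfrac{p-1}{p}\bigr)^p |\gamma-2\mathcal{G}|^p[\mathcal{G}(\gamma-\mathcal{G})]^{-1}|\nabla\mathcal{G}|^p$ together with the coarea formula \eqref{IPP2}, I obtain
\begin{align*}
X_n &= \tilde c\int_0^\gamma [t(\gamma-t)]^{p-1}|\varphi_n'(t)|^p\dt \asymp (\log n)^{-(p-1)},\\
Y_n &\asymp \int_0^\gamma \varphi_n(t)^p\frac{|\gamma-2t|^p}{t(\gamma-t)}\dt \asymp \log n,
\end{align*}
where the leading contributions to $X_n$ come from the two boundary layers (on which $|\varphi_n'(t)|\asymp (t\log n)^{-1}$ or $((\gamma-t)\log n)^{-1}$), and the leading contributions to $Y_n$ come from the simple poles of $|\gamma-2t|^p/[t(\gamma-t)]$ at $t=0$ and $t=\gamma$.

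Feeding these estimates into the upper bound \eqref{est_qsim} yields $\mathcal{Q}_{\mathrm{sim}}(w_n)\leq CX_n\to 0$ in the case $1<p\leq 2$; and in the case $p>2$, the decisive cross-term satisfies
\[
\bigl(X_n/Y_n\bigr)^{2/p}Y_n = X_n^{2/p}Y_n^{1-2/p} \asymp (\log n)^{-2(p-1)/p+(p-2)/p} = (\log n)^{-1}\to 0,
\]
so that $\mathcal{Q}_{\mathrm{sim}}(w_n)\to 0$ in all cases. Thus $\{vw_n\}$ is a null-sequence for $\mathcal{Q}_{-W}$, and Lemma~\ref{lem_crit} concludes that $\mathcal{Q}_{-W}$ is critical in $\Omega^\star$. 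The principal technical obstacle is the simultaneous treatment of both boundary layers in $Y_n$: unlike the one-ended situation of Proposition~\ref{pro_crit}, the weight $|\nabla v|^p$ has genuine singularities at each end of the range of $\mathcal{G}$, and the log-linear profile is calibrated precisely so that, for $p>2$, the exponents in $X_n^{2/p}Y_n^{1-2/p}$ collapse to $-1$.
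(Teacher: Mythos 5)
Your proof is correct and follows essentially the same route as the paper's: a null-sequence of the form $vw_n$ built from logarithmic cutoffs, the coarea formula \eqref{IPP2} to reduce $X_n$ and $Y_n$ to one-dimensional integrals with the asymptotics $(\log n)^{1-p}$ and $\log n$, and then \eqref{est_qsim} together with Lemma~\ref{lem_crit}. The only cosmetic difference is that you truncate on both sides in the variable $\mathcal{G}$, whereas the paper uses a one-sided cutoff composed with $v=[\mathcal{G}(\gg-\mathcal{G})]^{(p-1)/p}$, which cuts off both ends automatically since $v$ vanishes at $0$ and at $\bar{\infty}$.
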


\begin{proof}

The proof follows closely the proof of Proposition \ref{pro_crit}. Assume for simplicity that $\gg=\mathcal{G}(0)=1$. Recall that $v:=\left[\mathcal{G}(1-\mathcal{G})\right]^{\frac{p-1}{p}}$. Proposition~\ref{super_p>n} implies that $v$ is a positive solution of the equation $-\Gd_p(w)-W|w|^{p-2}w=0$ in $\Omega^\star$. We construct a null-sequence for the functional $\mathcal{Q}_{-W}$. This time, let
$$\varphi_n(t):=\left\{
  		\begin{array}{lll}
  			 0                         &\;\; 0\leq t\leq \frac{1}{n^2},\\[2mm]
			  2+\dfrac{\log t}{\log n} &\;\; \frac{1}{n^2}\leq t\leq \frac{1}{n},\\[2mm]
			  1                        & \;\;  \frac{1}{n}\leq t,\\[2mm]
		\end{array}
\right.$$
and consider the sequence $\{w_n=\varphi_n(v)\}_{n\in\mathbb{N}}$. By hypothesis, $v(0)=0$ and $\lim_{x\to\bar{\infty}}v(x)=0.$ Therefore, for every $n\in \mathbb{N}$, $w_n$ is compactly supported in $\Omega^\star$.
\medskip

\noindent{\bf Claim:}  The sequence $\{vw_n\}_{n\in\mathbb{N}}$ is a null-sequence for the functional $\mathcal{Q}_{-W}$.

\medskip

Set $B:=\{x\in \Gw^\star\mid \frac{1}{4}< v< \frac{3}{4}\}$, then $\bar{B}$ is compact in $\Omega^\star$.
As in the proof of Proposition \ref{pro_crit}, we set

$$X_n:=X(w_n)=\int_{\Omega^\star}v^p|\nabla w_n|^p\dnu,\quad \mbox{and } Y_n:=Y(w_n)=\int_{\Omega^\star}w_n^p|\nabla v|^p\dnu.$$
Let $f(s):=\left[s(1-s)\right]^{\frac{p-1}{p}}$. Using the coarea formula \eqref{IPP2}, we obtain
\begin{align*}\label{eq89}
X_n&=\int_{\Omega^\star}v^p|\nabla v|^p|\varphi_n'(v)|^p\dnu\\[2mm]
&=C\int_{\Omega^\star}\left[\mathcal{G}(1-\mathcal{G})\right]^{p-2}|1-2\mathcal{G}|^p|\varphi_n'\circ f(\mathcal{G})|^p|\nabla \mathcal{G}|^p\dnu\\[2mm]
&=\frac{C}{(\log n)^p}\int_{f(t)\in [1/n^2,1/n]}\frac{|1-2t|^p}{t(1-t)}\dt \asymp \frac{1}{(\log n)^{p-1}}\,.
\end{align*}
Using again the coarea formula \eqref{IPP2}, we get
\begin{align*}
Y_n=\int_{\Omega^\star}(\varphi_n(v))^p|\nabla v|^p\dnu=\int_0^1\varphi_n(f(t))^p\frac{|1-2t|^p}{t(1-t)}\dt \asymp \log n .
\end{align*}
In light of \eqref{est_qsim} we have $\lim_{n\to\infty}\mathcal{Q}_{\mathrm{sim}}(w_n) =0$. On the other hand, we clearly have
$$\int_B (vw_n)^p\dnu\asymp 1.$$
Hence, $\{vw_n\}_{n\in\mathbb{N}}$ is a null-sequence for the functional $\mathcal{Q}_{-W}$.
\end{proof}

\subsection{Optimality of the constant near infinity and zero}
In the present subsection we prove the optimality of the constant $C_p:=\left(\frac{p-1}{p}\right)^p$ near the ends of $\Gw^\star$. As in the previous subsection, we split the proof into two parts.
\begin{Pro}\label{best_const1}
Assume that in Theorem~\ref{thm_opt_hardy} the positive $p$-harmonic function $\mathcal{G}$ satisfies
\begin{equation}\label{assumpt_72}
\left\{
  \begin{array}{ll}
\displaystyle{\lim_{x\to 0} \mathcal{G}=\infty \;\mbox{ and }\; \lim_{x\to \bar{\infty}}} \mathcal{G}=0 & \hbox{ if } 1<p\leq n,\\[6mm]
\displaystyle{\lim_{x\to 0} \mathcal{G}=0 \;\;\;\mbox{ and }\; \lim_{x\to \bar{\infty}}} \mathcal{G}=\infty & \hbox{ if } p>n.
  \end{array}
\right.
\end{equation}
 Then the constant $\gl= C_p$ in the Hardy inequality
\begin{equation}\label{hardy_opt}
\int_{\Omega^\star}|\nabla \varphi|^p\dnu \geq    \gl \int_{\Omega^\star} \left|\frac{\nabla \mathcal{G}}{\mathcal{G}}\right|^p |\varphi|^p\dnu
\end{equation}
is also the best constant for functions $\vgf$ compactly supported either in a fixed punctured neighborhood of the origin, or  in a fixed neighborhood of infinity in $\Gw$.
\end{Pro}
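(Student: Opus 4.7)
The plan is to adapt the null-sequence construction of Proposition~\ref{pro_crit} so that the cutoff truncates the level sets of $v$ on only one side, forcing the resulting test functions to be compactly supported in a prescribed neighborhood of a single end of $\Omega^\star$. The central ingredient is the pointwise identity $|\nabla v|^p = W v^p$, which follows at once from $v=\mathcal{G}^{(p-1)/p}$ and $W=C_p|\nabla \mathcal{G}/\mathcal{G}|^p$ with $C_p:=\left(\frac{p-1}{p}\right)^p$. Consequently, for every $w\in C_0^\infty(\Omega^\star)$,
\[
\int_{\Omega^\star} W|vw|^p\dnu = Y(w),\qquad
\frac{\int_{\Omega^\star}|\nabla (vw)|^p\dnu}{\int_{\Omega^\star}|\nabla \mathcal{G}/\mathcal{G}|^p |vw|^p\dnu}
= C_p\left(1 + \frac{\mathcal{Q}_{-W}(vw)}{Y(w)}\right).
\]
It therefore suffices to produce, in any prescribed neighborhood of the given end, a sequence $\{w_n\}$ with $Y(w_n)\to\infty$ and $\mathcal{Q}_{-W}(vw_n)=o(Y(w_n))$.

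I describe the construction for a punctured neighborhood $U$ of $0$ in the regime $1<p\leq n$; the other three end/regime combinations are symmetric after interchanging the roles of large and small values of $v$. Continuity of $v$ together with $\lim_{x\to\bar\infty}v=0$ ensures that $M:=\sup_{\Omega^\star\setminus U} v<\infty$, so $\{v>M\}\subset U$. Set $w_n:=\varphi_n(v)$, where $\varphi_n$ is a smooth $[0,1]$-valued function supported in $[M,n^2]$, identically $1$ on $[2M,n]$, built from a fixed smooth transition on $[M,2M]$ and the logarithmic interpolation $2-\log t/\log n$ on $[n,n^2]$. Then $vw_n$ is compactly supported in $U$, and a standard mollification yields $C_0^\infty(U)$ test functions with the same asymptotics.

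The coarea formula \eqref{IPP} gives
\[
X(w_n)=c\!\int_M^{n^2}(t|\varphi_n'(t)|)^p\frac{\mathrm{d}t}{t}=O(1)+O\bigl((\log n)^{-(p-1)}\bigr),\qquad Y(w_n)\asymp \log n,
\]
where the $O(1)$ contribution comes from the fixed cutoff on $[M,2M]$ and the second term from the logarithmic cutoff on $[n,n^2]$; in particular $X(w_n)/Y(w_n)\to 0$. Lemma~\ref{simple_est} applied separately in the regimes $1<p\leq 2$ and $p>2$ (where the mixed term is controlled by exactly $X/Y\to 0$) yields $\mathcal{Q}_{\mathrm{sim}}(w_n)/Y(w_n)\to 0$, and Lemma~\ref{simpl_energy} upgrades this to $\mathcal{Q}_{-W}(vw_n)/Y(w_n)\to 0$. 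Substituting into the identity of the first paragraph, the Hardy ratio for $\{vw_n\}\subset C_0^\infty(U)$ tends to $C_p$, so no constant strictly larger than $C_p$ is admissible on $U$; combined with the globally valid Hardy inequality \eqref{hardy_opt} this forces the best constant on $U$ to equal $C_p$.

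The main technical point is checking that each truncated cutoff actually lies in the prescribed neighborhood of the chosen end; this rests on the fact that $v$ is bounded away from $\infty$ on the complement of any punctured neighborhood of $0$ and bounded away from $0$ on the complement of any neighborhood of $\bar\infty$, both following from continuity of $v$ together with its boundary values in \eqref{assumpt_72}. Once the coarea identity and Lemma~\ref{simpl_energy} are in place, the remainder is bookkeeping.
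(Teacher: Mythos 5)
Your proposal is correct, but it takes a genuinely different route from the paper. The paper argues by contradiction: it assumes a strictly larger constant is admissible on the fixed neighborhood, applies the ground-state transform (Lemma~\ref{simpl_energy}) to the supposed improved inequality to extract an estimate of the form $\beta Y(\varphi)\leq X(\varphi)$, and then uses the coarea formula \eqref{IPP} to reduce this to the one-dimensional weighted Hardy inequality $\int_0^1|\phi|^p\,\mathrm{d}t/t\leq C\int_0^1(t|\phi'|)^p\,\mathrm{d}t/t$, which is shown to fail (via Maz'ya's theorem or an explicit test family $\phi_\varepsilon(t)\sim|\log t|^{-\gg}$). You instead exploit the pointwise identity $|\nabla v|^p=Wv^p$ (which follows at once from $v=\mathcal{G}^{(p-1)/p}$ and the explicit form of $W$) to write the Hardy ratio for $vw$ as $C_p\bigl(1+\mathcal{Q}_{-W}(vw)/Y(w)\bigr)$, and then directly build one-sided logarithmic cutoffs supported in the prescribed neighborhood of a single end for which this ratio converges to $C_p$. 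Both arguments use the same machinery (coarea formula, simplified energy, logarithmic cutoffs), but yours is constructive rather than by contradiction, which has the advantage of exhibiting near-optimizers localized near each end explicitly and thereby showing at once that the one-sided truncation of Proposition~\ref{pro_crit}'s null-sequence still works. What the paper's route buys is a slightly cleaner reduction (one doesn't need to isolate the identity $|\nabla v|^p=Wv^p$ or worry about the fixed $O(1)$ contribution from the inner transition) and a reusable one-dimensional non-inequality that the paper can also invoke in related arguments. Your estimates $X_n=O(1)$, $Y_n\asymp\log n$, and the split of the $p>2$ case via $X_n^{2/p}Y_n^{1-2/p}=o(Y_n)$ are all correct, and the appeal to density/mollification to pass from Lipschitz compositions $\varphi_n(v)$ to $C_0^\infty$ test functions is standard (the paper implicitly does the same in Proposition~\ref{pro_crit}).
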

\begin{proof}
We assume that $1<p\leq n$, and present the proof of the optimality at infinity, the other cases are proved similarly. We proceed by a contradiction.

Suppose that there exists a positive constant $\lambda$ and a compact set $K\Subset \Omega$ containing zero such that
\begin{equation}\label{const_non_opt}
\int_{\Omega\setminus K}\Big(|\nabla \psi|^p-W|\psi|^p\Big)\dnu \geq \lambda \int_{\Omega\setminus K} W|\psi|^p \dnu \qquad \forall \psi\in C_0^\infty(\Omega\setminus K).
\end{equation}
We apply inequality \eqref{const_non_opt} to $\psi=v\varphi$, where $v:=\mathcal{G}^{(p-1)/p}$ is a positive solution of $Q_{-W}(w)=0$, and $\varphi\in C_0^\infty(\Omega\setminus K)$. Now, use Lemma~\ref{simpl_energy} and \eqref{const_non_opt} to obtain that for some positive constant $\beta$ we have
\begin{equation}\label{const_non_opt2}
\beta Y(\varphi)\leq\left\{
  \begin{array}{ll}
    X(\varphi) & \mbox{ if } 1<p\leq 2,\\[4mm]
	X(\varphi)+\left(\frac{X(\varphi)}{Y(\varphi)}\right)^{\frac{2}{p}}Y(\varphi) & \mbox{ if } p>2,
  \end{array}
 \right. \quad \forall \varphi\in C_0^\infty(\Omega\setminus K),
\end{equation}
where we recall that $X(\varphi):=\int_{\Omega^\star} v^p |\nabla \varphi|^p\dnu$ and $Y(\varphi):=\int_{\Omega^\star}\varphi^p|\nabla v|^p\dnu=\int_{\Omega^\star}v^p\varphi^p W\dnu$. In the case $p>2$, using the fact that for every $\varepsilon>0$, there is a constant $C>0$ such that for every $t>0$, $t+t^{2/p}\leq Ct+\varepsilon$, we have that

$$X(\varphi)+\left(\frac{X(\varphi)}{Y(\varphi)}\right)^{\frac{2}{p}}Y(\varphi)\leq CX(\varphi)+\varepsilon Y(\varphi).$$
Taking $\varepsilon<\beta$, we get by \eqref{const_non_opt2} that for any $1<p<\infty$, there is a constant $C>0$ such that
\begin{equation}\label{opt_p>2}
CY(\varphi)\leq X(\varphi) \qquad \forall \varphi\in C_0^\infty(\Omega\setminus K).
\end{equation}
Assume without loss of generality that $\{v\leq 1\}\subset \Omega\setminus K$. Using the coarea formula \eqref{IPP}, and applying inequality \eqref{opt_p>2} to $\varphi=\phi(v)$, where $\phi\in C_0^\infty\big((0,1)\big)$ we get that
\begin{equation}\label{ineq_1D}
\int_0^1 |\phi(t)|^p\frac{\mathrm{d}t}{t}\leq C\int_0^1(t|\phi'(t)|)^p\frac{\mathrm{d}t}{t}\qquad\forall \phi\in C_0^\infty\big((0,1)\big).
\end{equation}
But by \cite[Theorem~1 of Sec.~1.3.2]{Maz}, this inequality cannot hold.

Alternatively, an easy way to see that \eqref{ineq_1D} does not hold is to define a sequence $\{\phi_\varepsilon\}$ of compactly supported Lipschitz continuous functions in $(0,1)$ of the form
$$\phi_\varepsilon(t):=\left\{
\begin{array}{ll}
 \dfrac{t}{\varepsilon |\log \varepsilon|^\gg} & t\in(0,\varepsilon), \\[5mm]
 \dfrac{1}{|\log t|^\gamma} & t\in (\varepsilon,\frac{1}{2}),\\[5mm]
\psi(t) & t\in (\frac{1}{2},1),
\end{array}
\right.
$$
where $\psi$ is a smooth function, independent of $\varepsilon$ such that $\psi(1)=0$, and $\gamma>0$ will be determined later. Apply inequality \eqref{ineq_1D} to $\phi_\varepsilon$ to get
\begin{equation}\label{limit_cont}
\int_\varepsilon^{\frac{1}{2}} |\phi_\varepsilon (t)|^p\frac{\mathrm{d}t}{t}\leq C\left(\frac{1}{\varepsilon |\log\varepsilon|^\gamma}\int_0^\varepsilon t^p\frac{\mathrm{d}t}{t}+\int_\varepsilon^1(t|\phi_\varepsilon'(t)|)^p\frac{\mathrm{d}t}{t}\right).
\end{equation}
Since $p>1$,
$$\lim_{\varepsilon\to0}\frac{1}{\varepsilon |\log\varepsilon|^\gamma}\int_0^\varepsilon t^p\frac{\mathrm{d}t}{t}=\lim_{\varepsilon\to0}\frac{\varepsilon^{p-1}}{|\log\varepsilon|^\gamma}=0,$$
therefore, letting $\varepsilon\to0$ in \eqref{limit_cont}, we get
$$\int_0^{\frac{1}{2}} \left(\frac{1}{|\log t|^\gamma}\right)^p\frac{\mathrm{d}t}{t}\leq C\left(\int_0^\frac{1}{2}\left(\frac{1}{|\log t|^{\gamma+1}}\right)^p\frac{\mathrm{d}t}{t}+\int_\frac{1}{2}^1(t\psi'(t))^p \frac{\mathrm{d}t}{t}\right).$$
The right-hand side is finite for every positive value of $\gamma$, since $p(\gamma+1)>1$. The left-hand side, on the contrary, is finite if and only if  $p\gamma>1$. Thus, taking $\gamma$ such that $p\gamma\leq 1$, we get a contradiction. As a consequence, inequality \eqref{ineq_1D} cannot hold.
\end{proof}
Next, we prove the optimality of the constant $C_p=\left(\frac{p-1}{p}\right)^p$ near the ends of $\Gw^\star$ if assumption \eqref{assumpt_8} with $\gg>0$ is satisfied:
\begin{Pro}\label{best_const2}
Assume that in Theorem~\ref{thm_opt_hardy} $p>n$, and the positive $p$-harmonic function $\mathcal{G}$ satisfies
\begin{equation}\label{assumpt_82}
  \lim_{x\to 0} \mathcal{G}=\gg>0 \;\mbox{ and }\; \lim_{x\to \bar{\infty}} \mathcal{G}=0.
  \end{equation}
 Denote
 $$V:=\left|\frac{\nabla \mathcal{G}}{\mathcal{G}(\gg-\mathcal{G})}\right|^p|\gg-2\mathcal{G}|^{p-2}\left[2(p-2)\mathcal{G}(\gg-\mathcal{G})+\gg^2\right].$$
Then in the Hardy inequality
\begin{equation}\label{hardy_opt2}
\int_{\Omega^\star}|\nabla \varphi|^p\dnu \geq    \gl\int_{\Omega^\star} V |\varphi|^p\dnu
\end{equation}
the constant $\gl= C_p$ is also the best constant for functions compactly supported either in a fixed punctured neighborhood of the origin, or  in a fixed neighborhood of infinity in $\Gw$.
\end{Pro}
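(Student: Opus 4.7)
The argument parallels that of Proposition~\ref{best_const1}, with $v := [\mathcal{G}(\gamma - \mathcal{G})]^{(p-1)/p}$ playing the role of $\mathcal{G}^{(p-1)/p}$ and the coarea formula \eqref{IPP2} replacing \eqref{IPP}. I will handle the optimality near $\bar\infty$ (where $\mathcal{G}\to 0$); the case of a neighborhood of the origin (where $\mathcal{G}\to\gamma$) is identical, since the weights $t(\gamma - t)$ and $|\gamma - 2t|$ that appear in the reduction are symmetric under $t\mapsto\gamma - t$.

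Suppose for contradiction that, for some compact $K\ni 0$ and some $\lambda>0$, inequality \eqref{const_non_opt} holds on $\Omega\setminus K$ with $W$ the weight from Theorem~\ref{thm_opt_hardy}(2). Plugging in $\psi = v\varphi$, the left-hand side is controlled above via the simplified energy (Lemmas~\ref{simpl_energy} and \ref{simple_est}), while for the right-hand side a direct computation gives
$$|\nabla v|^p = \left(\tfrac{p-1}{p}\right)^p [\mathcal{G}(\gamma-\mathcal{G})]^{-1}|\gamma - 2\mathcal{G}|^p|\nabla\mathcal{G}|^p,$$
$$Wv^p = \left(\tfrac{p-1}{p}\right)^p \tfrac{|\nabla\mathcal{G}|^p}{\mathcal{G}(\gamma-\mathcal{G})}|\gamma-2\mathcal{G}|^{p-2}\bigl[2(p-2)\mathcal{G}(\gamma-\mathcal{G}) + \gamma^2\bigr].$$
Both quantities are pointwise comparable to $|\nabla\mathcal{G}|^p/\mathcal{G}$ on any set of the form $\{\mathcal{G}<\epsilon\}$ with $\epsilon<\gamma/2$; in particular $Wv^p\asymp|\nabla v|^p$ there. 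Running the absorption step of \eqref{const_non_opt2}--\eqref{opt_p>2} and, if necessary, enlarging $K$ so that $\{\mathcal{G}<\epsilon\}\subset\Omega\setminus K$, one obtains
$$C\, Y(\varphi) \leq X(\varphi) \qquad \forall \varphi\in C_0^\infty(\{\mathcal{G}<\epsilon\}).$$

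Choosing $\varphi = \phi(\mathcal{G})$ with $\phi\in C_0^\infty((0,\epsilon))$ and applying the coarea formula \eqref{IPP2} to both $X$ and $Y$ reduces the last inequality to a weighted one-dimensional inequality on $(0,\epsilon)$ with weights $[t(\gamma-t)]^{p-1}$ and $[t(\gamma-t)]^{-1}|\gamma - 2t|^p$ respectively. On $(0,\epsilon)$ these weights are comparable to $t^{p-1}$ and $t^{-1}$, so one arrives at exactly \eqref{ineq_1D} on $(0,\epsilon)$, which was already shown to fail via the logarithmic family $\phi_\varepsilon$ at the end of the proof of Proposition~\ref{best_const1}. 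The main care point is the local comparability $Wv^p\asymp|\nabla v|^p$: unlike in Proposition~\ref{best_const1}, the pointwise equality fails here because $|\gamma - 2\mathcal{G}|^{p-2}[2(p-2)\mathcal{G}(\gamma-\mathcal{G}) + \gamma^2]$ does not coincide with $|\gamma - 2\mathcal{G}|^p$, but the two-sided comparability on the chosen end neighborhoods is all that the argument requires.
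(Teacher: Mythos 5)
Your argument is correct and follows the same overall scheme as the paper: contradiction, test functions $\psi=v\varphi$, simplified energy and absorption to reach $CY\leq X$, then coarea and the 1D Hardy inequality \eqref{ineq_1D}. The one cosmetic difference is in how the reduction to \eqref{ineq_1D} is organized: the paper introduces the auxiliary function $\tilde v=\mathcal{G}^{(p-1)/p}$, uses the asymptotic equivalences $v\sim\tilde v$ and $W\sim C_p|\nabla\mathcal{G}/\mathcal{G}|^p$ near $\bar\infty$, and then literally falls back onto the inequality chain already established in Proposition~\ref{best_const1}; you instead substitute $\varphi=\phi(\mathcal{G})$ and apply \eqref{IPP2} directly, which recomputes the 1D weights explicitly and then compares them to $t^{p-1}$ and $t^{-1}$ on a set $\{\mathcal{G}<\epsilon\}$ with $\epsilon<\gamma/2$. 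Both lead to the same contradiction, and the two coarea formulas are equivalent by the change of variables $\tau=t^{(p-1)/p}$, so the routes are interchangeable. One small phrasing issue: to arrange $\{\mathcal{G}<\epsilon\}\subset\Omega\setminus K$ you should shrink $\epsilon$ (take $\epsilon<\min(\gamma/2,\min_K\mathcal{G})$), not enlarge $K$; enlarging $K$ makes $\Omega\setminus K$ smaller and works against you. This is a local slip, not a gap, since the intended freedom is clearly available.
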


\begin{proof}
We prove the optimality of the constant $C_p$ at infinity, the proof of the optimality at zero is similar (by replacing $\mathcal{G}$ with $(\gg-\mathcal{G})$). Note that
$W=C_pV$. Assume by contradiction that $C_p$ is not optimal at infinity, then there is a positive constant $\lambda$ and a compact subset $K$ of $\Omega$ containing $0$, such that

\begin{equation}\label{const_non_opti2}
\int_{\Omega\setminus K}\Big(|\nabla \psi|^p-W|\psi|^p\Big)\dnu \geq \lambda \int_{\Omega\setminus K} W|\psi|^p \dnu \qquad \forall \psi\in C_0^\infty(\Omega\setminus K).
\end{equation}
Since by our assumption $\lim_{x\to\bar{\infty}}\mathcal{G}(x)=0$, we have

$$W\underset{x\to \bar{\infty}}{\sim}\; \left(\frac{p-1}{p}\right)^{p}\left|\frac{\nabla \mathcal{G}}{\mathcal{G}}\right|^p.$$
Therefore, by enlarging $K$, we may assume that the following inequality is satisfied, for some $\mu>0$:

\begin{equation}\label{non-opti2}
\int_{\Omega\setminus K}\Big(|\nabla \psi|^p-W|\psi|^p\Big)\dnu \geq \mu \int_{\Omega\setminus K} \left|\frac{\nabla \mathcal{G}}{\mathcal{G}}\right|^p|\psi|^p \dnu \qquad \forall \psi\in C_0^\infty(\Omega\setminus K).
\end{equation}
We apply this inequality to $\psi=\varphi v$, where $v=[\mathcal{G}(\gg-\mathcal{G})]^{\frac{p-1}{p}}$ is a positive solution of $Q_{-W}(w)=0$, and $\varphi\in C_0^\infty(\Omega\setminus K)$. Define $\tilde{v}:=\mathcal{G}^{\frac{p-1}{p}}$, and notice that at infinity,

$$v\underset{x\to \bar{\infty}}{\sim}\; \tilde{v},$$
and

$$\left|\frac{\nabla v}{v}\right|^p\underset{x\to \bar{\infty}}{\sim}\; \left(\frac{p-1}{p}\right)^p\left|\frac{\nabla \mathcal{G}}{\mathcal{G}}\right|^p
=\left|\frac{\nabla \tilde{v}}{\tilde{v}}\right|^p.$$
Therefore, from Lemma \ref{simpl_energy}, \eqref{est_qsim} with $p>2$, and \eqref{non-opti2}, one gets that for some positive constant $\beta$,

\begin{equation}
\beta \tilde{Y}(\varphi)\leq \tilde{X}(\varphi)+\left(\frac{\tilde{X}(\varphi)}{\tilde{Y}(\varphi)}\right)^{\frac{2}{p}}\tilde{Y}(\varphi),\qquad \forall \varphi\in C_0^\infty(\Omega\setminus K),
\end{equation}
where $\tilde{X}(\varphi):=\int_{\Omega^\star} \tilde{v}^p |\nabla \varphi|^p\dnu$ and $\tilde{Y}(\varphi):=\int_{\Omega^\star}\varphi^p|\nabla \tilde{v}|^p\dnu$. We are back to inequality \eqref{const_non_opt2} of the proof of Proposition \ref{best_const1}, where we have shown that such an inequality cannot hold. Consequently, \eqref{const_non_opti2} does not hold, and the constant $\left(\frac{p-1}{p}\right)^p$ in the inequality \eqref{hardy_opt2} is optimal at infinity.

\end{proof}

\subsection{Null-criticality}
The null-criticality of the operators $Q_{-W}$ in $\Gw^\star$ follows from our coarea formula \eqref{IPP}. First, we have:
\begin{Pro}\label{null1}
Assume that in Theorem~\ref{thm_opt_hardy} the positive $p$-harmonic function $\mathcal{G}$ satisfies
\begin{equation}\label{assumpt_73}
\left\{
  \begin{array}{ll}
\displaystyle{\lim_{x\to 0} \mathcal{G}=\infty \;\mbox{ and }\; \lim_{x\to \bar{\infty}}} \mathcal{G}=0 & \hbox{ if } 1<p\leq n,\\[6mm]
\displaystyle{\lim_{x\to 0} \mathcal{G}=0 \;\;\;\mbox{ and }\; \lim_{x\to \bar{\infty}}} \mathcal{G}=\infty & \hbox{ if } p>n.
  \end{array}
\right.
\end{equation}
Then the functional $\mathcal{Q}_{-W}$ is null-critical at $0$ and at infinity in $\Gw$.
\end{Pro}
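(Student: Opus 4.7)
The plan is to show, for every pre-compact open $O \ni 0$, that
\[
\int_{\Omega \setminus \bar{O}} |\nabla v|^p \dnu = \infty = \int_{O \setminus \{0\}} |\nabla v|^p \dnu.
\]
This yields both required non-containments at once: since $V=0$, the Dirichlet $L^p$-seminorm is the defining norm of $\mathcal{D}^{1,p}(\Omega \setminus \bar{O})$, so if $\{\varphi_n\} \subset C_0^\infty(\Omega \setminus \bar{O})$ were a Cauchy sequence representing $v$ in the completion, then $\nabla \varphi_n$ would converge in $L^p(\Omega \setminus \bar{O})$ and hence $\|\nabla v\|_{L^p(\Omega \setminus \bar{O})}$ would have to be finite; the argument for $O \setminus \{0\}$ is symmetric.

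The main computational input is the pointwise identity $|\nabla v|^p = W v^p$, obtained by a one-line chain-rule computation from $v = \mathcal{G}^{(p-1)/p}$ together with the definition of $W = \left(\frac{p-1}{p}\right)^p\left|\frac{\nabla \mathcal{G}}{\mathcal{G}}\right|^p$. Combined with the coarea formula \eqref{IPP} applied to a smooth approximation of $\chi_{[a, b]}$, and passing to the limit by monotone convergence, this yields
\[
\int_{\{a \leq v \leq b\}} |\nabla v|^p \dnu = c \log(b/a) \qquad \text{for all } 0 < a < b < \infty.
\]
The set $\{a \leq v \leq b\}$ is compact in $\Omega^\star$ precisely because under \eqref{assumpt_73} $v$ extends continuously to the one-point compactification with values $0$ at one end and $\infty$ at the other, so the cutoff $\chi_{[a,b]}(v)$ has compact support in $\Omega^\star$ and Proposition~\ref{pro_key} applies. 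Sending $a \to 0^+$ or $b \to \infty$ therefore forces $\int_{\{v < b\}} |\nabla v|^p \dnu = \infty$ and $\int_{\{v > a\}} |\nabla v|^p \dnu = \infty$ for every $a, b > 0$.

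It remains to localize these divergent integrals inside the right neighborhood. Under \eqref{assumpt_73}, the ``zero-end'' of $v$ (where $v \to 0$) is $\bar{\infty}$ when $1 < p \leq n$ and $0$ when $p > n$; denote by $N_0$ and $N_\infty$ the corresponding two sets among $\Omega \setminus \bar{O}$ and $O \setminus \{0\}$ (which of the two plays which role depending on the case). I would verify that there exist $\delta = \delta(O) > 0$ and $M = M(O) < \infty$ with $\{v < \delta\} \subset N_0$ and $\{v > M\} \subset N_\infty$. For the $M$-inclusion, the maximum principle applied to the $p$-harmonic function $\mathcal{G}$ on the component of $\Omega^\star \setminus \partial O$ containing the zero-end, together with the decay $\mathcal{G} \to 0$ along that end, gives $\sup v \leq \sup_{\partial O} v$ on that component, so $M := \sup_{\partial O} v + 1$ works. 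The $\delta$-inclusion is analogous, using the minimum principle on the component containing the infinity-end and the divergence $\mathcal{G} \to \infty$ along it to bound $\mathcal{G}$ from below by $\inf_{\partial O} \mathcal{G} > 0$; take $\delta := \tfrac12 \inf_{\partial O} v$.

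The delicate point is this localization step: one must rule out anomalously small (respectively large) values of $v$ in the ``wrong'' half of $\Omega^\star$, which is precisely what the max/min principle for $\mathcal{G}$ on each half of $\Omega^\star \setminus \partial O$ provides, together with the prescribed one-sided limits of $\mathcal{G}$ at $0$ and $\bar{\infty}$. Once the two inclusions are in place, the coarea divergence above immediately gives the desired contradiction. The driving computation is essentially the one already used in the null-sequence construction of Proposition~\ref{pro_crit}, only now we exploit the divergence of $\int_0^\infty \dtau/\tau$ at \emph{each} end separately rather than across a bounded interval.
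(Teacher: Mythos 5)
Your argument is essentially the paper's own: both rest on the coarea formula \eqref{IPP}, which gives $\int_{\{t_-<\mathcal{G}<t_+\}}|\nabla v|^p\dnu=c\log(t_+/t_-)$, so letting $t_-\to 0$ and $t_+\to\infty$ shows the Dirichlet integral of the ground state $v=\mathcal{G}^{(p-1)/p}$ (resp.\ $v$ as in the theorem) diverges at each end, whence $v$ belongs neither to $\mathcal{D}^{1,p}(\Omega\setminus\bar{O})$ nor to $\mathcal{D}^{1,p}(O\setminus\{0\})$. The only addition is your explicit localization of the divergent mass to the correct end via the maximum/minimum principle for $\mathcal{G}$ (a step the paper leaves implicit), and that step is carried out correctly.
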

\begin{proof}
Let $v:=\mathcal{G}^{\frac{p-1}{p}}$, and denote also $u:=\mathcal{G}$. A minimizer of the variational problem \eqref{var_prob} is necessarily a positive solution of the equation $Q_{-W}=0$ in $\Gw^*$. Since $Q_{-W}$ is critical, a minimizer in $\mathcal{D}^{1,p}(\Gw)$ should be the ground state $v$. We claim that for any neighborhood $O$ of $0$, the ground state $v$ belongs neither to  $\mathcal{D}^{1,p}(\Omega\setminus \bar{O})$ nor to $\mathcal{D}^{1,p}(O\setminus\{0\})$. Indeed,
the coarea formula \eqref{IPP} implies that
$$\int_{\{t_-<u(x)<t_+\}}\hspace{-.7cm}|\nabla v|^p\dnu=c_1\int_{t_-}^{t_+} \frac{\dt}{t}
\underset{t_\pm \to \varepsilon_\pm}{\longrightarrow} \infty  ,$$
with $\varepsilon_+=\infty$ and $\varepsilon_-=0$. Thus, the claim is proved.
\end{proof}
The corresponding result, under assumption \eqref{assumpt_8} with $\gg>0$, reads as follows

\begin{Pro}\label{null2}
Assume that in Theorem~\ref{thm_opt_hardy} $p>n$, and the positive $p$-harmonic function $\mathcal{G}$ satisfies
\begin{equation}\label{assumpt_83}
  \lim_{x\to 0} \mathcal{G}=\gg>0 \;\mbox{ and }\; \lim_{x\to \bar{\infty}} \mathcal{G}=0.
  \end{equation}
 Then the functional $\mathcal{Q}_{-W}$ is null-critical at $0$ and at infinity in $\Gw$.
\end{Pro}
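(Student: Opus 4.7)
My plan is to mirror the argument of Proposition~\ref{null1}, now with the ground state $v:=[\mathcal{G}(\gamma-\mathcal{G})]^{(p-1)/p}$ furnished by Proposition~\ref{super_p>n}. Since $\mathcal{Q}_{-W}$ is critical in $\Omega^\star$ by Proposition~\ref{pro_crit2}, any minimizer of \eqref{var_prob} would have to be a positive constant multiple of the unique positive (super)solution $v$. Thus the proposition reduces to establishing
$$\int_{\Omega\setminus\bar O}|\nabla v|^p\dnu=\infty\qquad\text{and}\qquad\int_{O\setminus\{0\}}|\nabla v|^p\dnu=\infty$$
for every pre-compact open set $O\subset\Omega$ containing $0$; indeed, any function in $\mathcal{D}^{1,p}_{\mathcal{Q}_0}(\tilde\Omega)$ must have finite $\int|\nabla\cdot|^p\dnu$.

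The first step is a direct chain-rule computation yielding
$$|\nabla v|^p=\left(\frac{p-1}{p}\right)^p\frac{|\gamma-2\mathcal{G}|^p}{\mathcal{G}(\gamma-\mathcal{G})}|\nabla\mathcal{G}|^p.$$
Writing $h(t):=|\gamma-2t|^p/[t(\gamma-t)]$, I would observe that $h(t)\sim \gamma^{p-1}/t$ as $t\to 0^+$ and $h(t)\sim \gamma^{p-1}/(\gamma-t)$ as $t\to \gamma^-$, so that $\int_0^\gamma h(t)\dt$ diverges logarithmically at \emph{both} endpoints. The task thereafter is to trap a neighbourhood of $\bar\infty$ inside a sub-level set $\{\mathcal{G}<t_-\}$ and a neighbourhood of $0$ inside a super-level set $\{\mathcal{G}>t_+\}$, then apply the coarea formula~\eqref{IPP2} to transfer this one-dimensional divergence to a divergence of $\int|\nabla v|^p\dnu$ on the respective regions.

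For the first trapping, I would extend $\mathcal{G}$ continuously to $\bar O$ by setting $\mathcal{G}(0):=\gamma$ and take $t_-:=\min_{\bar O}\mathcal{G}>0$, giving $\{\mathcal{G}<t_-\}\subset\Omega\setminus\bar O$. For the second, the strong maximum principle for $p$-harmonic functions forces $\mathcal{G}<\gamma$ throughout $\Omega^\star$; combined with $\mathcal{G}\to 0$ at $\bar\infty$, a short sequential-compactness argument yields $t_+:=\sup_{\Omega\setminus O}\mathcal{G}<\gamma$, whence $\{\mathcal{G}>t_+\}\subset O\setminus\{0\}$. Once these inclusions are in place, the slabs $\{\varepsilon\le\mathcal{G}\le t_--\varepsilon\}$ and $\{t_++\varepsilon\le\mathcal{G}\le\gamma-\varepsilon\}$ are compactly contained in $\Omega^\star$, so Proposition~\ref{pro_key} applies with $g$ equal to $h$ times the indicator of the relevant interval (obtained as a monotone limit of continuous compactly-supported cutoffs). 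Letting $\varepsilon\to 0$ and invoking monotone convergence on both sides transfers the one-dimensional logarithmic divergence onto the respective $p$-Dirichlet integrals of $v$, finishing the proof. The main obstacle is the second level-set trapping: it is the only point where the strong maximum principle and the global boundary behaviour of $\mathcal{G}$ must be invoked to guarantee that an arbitrary pre-compact neighbourhood of $0$ contains a non-trivial super-level set of $\mathcal{G}$; everything else is a coarea reduction to a one-dimensional endpoint estimate.
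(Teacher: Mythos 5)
Your proposal is correct and takes essentially the same route as the paper: both arguments reduce null-criticality at each end to the divergence of a one-dimensional integral at the endpoints $t=0$ and $t=\gamma$ via the coarea formula for $\mathcal{G}$ (Proposition~\ref{pro_key}). The only cosmetic differences are that you work with the exact identity $|\nabla v|^p=\left(\frac{p-1}{p}\right)^p\frac{|\gamma-2\mathcal{G}|^p}{\mathcal{G}(\gamma-\mathcal{G})}\,|\nabla\mathcal{G}|^p$ instead of the paper's asymptotic comparison of $v$ with $\mathcal{G}^{(p-1)/p}$ near infinity (and with $(\gamma-\mathcal{G})^{(p-1)/p}$ near $0$), and that you spell out the level-set trapping and the bound $\mathcal{G}<\gamma$ which the paper leaves implicit.
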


\begin{proof}
The proof is similar to the proof of Proposition~\ref{null1}. Indeed, recall that $v:=[\mathcal{G}(\gg-\mathcal{G})]^{\frac{p-1}{p}}$. Let $\varepsilon_+=\gg$ and $\varepsilon_-=0$. It is enough to prove that
$$\lim_{t_{\pm}\to\varepsilon_\pm}\int_{\{t_-<\mathcal{G}<t_+\}}\hspace{-.7cm}|\nabla v|^p\dnu=\infty.$$
We prove it when $t_-\to 0$, the other case is similar, (replace $\mathcal{G}$ with $(\gg-\mathcal{G})$). Define $\tilde{v}=\mathcal{G}^{\frac{p-1}{p}}$. At infinity in $\Omega$, we have

$$v\underset{x\to \bar{\infty}}{\sim}\;\tilde{v},$$
and

$$\left|\frac{\nabla v}{v}\right|^p\underset{x\to \bar{\infty}}{\sim}\; \left(\frac{p-1}{p}\right)^p\left|\frac{\nabla \mathcal{G}}{\mathcal{G}}\right|^p
=\left|\frac{\nabla \tilde{v}}{\tilde{v}}\right|^p.$$
Therefore,  by the coarea formula \eqref{IPP}, one has as $t_-\to 0$,

$$\int_{\{t_-<\mathcal{G}<\gg/2\}}\hspace{-.7cm}|\nabla v|^p\dnu\sim \int_{\{t_-<\mathcal{G}<\gg/2\}}\hspace{-.7cm}|\nabla \tilde{v}|^p\dnu=\int_{t_-}^{\frac{\gg}{2}}\frac{\mathrm{d}t}{t},$$
and consequently,

$$\lim_{t_{-}\to\varepsilon_-}\int_{\{t_-<\mathcal{G}<\gg/2\}}\hspace{-.7cm}|\nabla v|^p\dnu=\infty.$$

\end{proof}
We conclude the present section with a corollary concerning Caccioppoli inequality. Recall the logarithmic Caccioppoli inequality \eqref{cacc} which holds in particular in $\Gw^\star$:
\begin{equation}\label{cacc1}
 \int_{\Omega^\star} |\nabla\varphi|^p\dnu \geq \gm \int_{\Omega^\star} \left|\frac{\nabla v}{v}\right|^p |\varphi|^p\dnu \qquad \forall\varphi\in C_0^\infty(\Omega^\star),
\end{equation}
where $v$ is any positive $p$-superharmonic functions  in $\Omega^\star$, and $\gm\geq C_P=\left(\frac{p-1}{p}\right)^p$.
By the results of \cite{DFP} it follows that in the linear case (where $p=2$) the constant $C_2=1/4$ in \eqref{cacc1} is optimal.

Now, Theorem~\ref{thm_opt_hardy} clearly implies the optimality of the constant $C_p$ also for any $1<p\leq n$. More precisely, we have.
\begin{corollary}
Assume that $1<p\leq n$, and suppose that $\Gw$ is a $C^{1,\ga}$-domain of a noncompact Riemannian manifold $M$ (where $\alpha\in(0,1]$), and $-\Delta_p$ is subcritical in $M$. Let  $G^{M}$ be the positive minimal Green function, and assume that $\lim_{x\to \bar{\infty}} G^{M}(x,0)=0$.

Then the best constant in the logarithmic Caccioppoli inequality \eqref{cacc1} equals to  $\left(\frac{p-1}{p}\right)^p$.
\end{corollary}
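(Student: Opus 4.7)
The plan is to deduce the corollary directly from Theorem \ref{thm_opt_hardy} by specializing \eqref{cacc1} to one well-chosen positive $p$-superharmonic function for which $C_p$ is already known to be sharp. By Proposition \ref{supersolution p-Laplacian}, $\mu = C_p$ is admissible in \eqref{cacc1} for every positive $p$-superharmonic $v$ on $\Omega^\star$, so the best universal $\mu$ is at least $C_p$. It therefore suffices to produce a matching upper bound by exhibiting a single $v$ for which no $\mu > C_p$ works.

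The candidate is $v := \mathcal{G} := G^{\Omega}(\cdot, 0)$, the positive minimal $p$-Green function of $-\Delta_p$ in $\Omega$ with pole at $0$. First I would verify that $\mathcal{G}$ satisfies assumption \eqref{assumpt_7} of Theorem \ref{thm_opt_hardy}. Since $1 < p \leq n$, the standard local asymptotics recalled in Section \ref{sec_prelim} give $\mathcal{G}(x) \to \infty$ as $x \to 0$. The decay $\mathcal{G} \to 0$ at $\bar{\infty}$ in $\Omega$ is obtained from the exhaustion argument sketched in Remark \ref{assumptions}: using the subcriticality of $-\Delta_p$ on $M$ together with $\lim_{x \to \bar{\infty}} G^M(x, 0) = 0$, and the monotonicity of minimal Green functions with respect to the domain, one has $G^{\Omega}(\cdot, 0) \leq G^{M}(\cdot, 0)$, so $\mathcal{G}(x) \to 0$ as $x \to \bar{\infty}$ in $\Omega$.

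With \eqref{assumpt_7} verified, part (1) of Theorem \ref{thm_opt_hardy} applies, and the weight
$$W := \left(\frac{p-1}{p}\right)^p \left| \frac{\nabla \mathcal{G}}{\mathcal{G}} \right|^p$$
is an optimal Hardy weight for $-\Delta_p$ on $\Omega^\star$. By the criticality condition (Definition \ref{def_opt}(1), equivalently Proposition \ref{best_const1}), the Hardy inequality $\int_{\Omega^\star} |\nabla \varphi|^p \dnu \geq \lambda \int_{\Omega^\star} W |\varphi|^p \dnu$ fails for every $\lambda > 1$. Rewriting, this says that $C_p$ cannot be replaced by any strictly larger constant in \eqref{cacc1} for the specific choice $v = \mathcal{G}$. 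A fortiori, the best constant valid simultaneously for every positive $p$-superharmonic $v$ in $\Omega^\star$ is $\leq C_p$, and combined with the lower bound from Proposition \ref{supersolution p-Laplacian} this yields equality.

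There is essentially no hard step: once Theorem \ref{thm_opt_hardy} is in hand, the only nontrivial point is confirming the endpoint behavior of $\mathcal{G}$ at $0$ and at $\bar{\infty}$, which is precisely what the hypotheses on $M$ and the exhaustion argument of Remark \ref{assumptions} are designed to supply.
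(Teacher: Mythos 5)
Your proof is correct and follows precisely the route the paper intends: the paper states the corollary with essentially no proof, remarking only that it ``clearly'' follows from Theorem~\ref{thm_opt_hardy}, and your argument is exactly the expected unpacking of that remark. Proposition~\ref{supersolution p-Laplacian} gives the lower bound on the best constant, and the choice $v=\mathcal{G}=G^{\Omega}(\cdot,0)$ together with Theorem~\ref{thm_opt_hardy} gives the matching upper bound.

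Two small remarks. First, your parenthetical ``Definition~\ref{def_opt}(1), equivalently Proposition~\ref{best_const1}'' misattributes the criticality part of the optimality definition: the criticality statement Definition~\ref{def_opt}(1) is established in Proposition~\ref{pro_crit}, whereas Proposition~\ref{best_const1} is the separate statement about optimality at the two ends (Definition~\ref{def_opt}(2)). The conclusion you draw, namely that no $\lambda>1$ works, is still correct because it follows from criticality alone; the cross-reference is just off. Second, the monotonicity $G^{\Omega}(\cdot,0)\le G^{M}(\cdot,0)$ by itself only controls $\mathcal{G}$ along sequences escaping to the infinity of $M$; for sequences approaching $\partial\Omega$, $G^{M}$ stays bounded away from zero, so one also needs the boundary decay of $G^{\Omega}$ on the $C^{1,\alpha}$-boundary, which Remark~\ref{assumptions} supplies via the results of \cite{FP,Garcia}. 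You gesture at Remark~\ref{assumptions}, so the argument is available, but it would be cleaner to state both ingredients explicitly.
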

\mysection{Optimal weights for annular and exterior domains}\label{sec_ann_ext}
In the present section we extend our main result (Theorem~\ref{thm_opt_hardy}), obtained for punctured domains, to two additional types of domains: annular-type domains and exterior-type domains.
As in the case of punctured domains, we view these two types of domains as manifolds with two ends. In particular, Definition~\ref{def_opt} of optimal Hardy-type weight (which was given for a punctured domain) is extended naturally to handle annular-type and exterior-type domains.

We assume that the given positive $p$-harmonic function admits limits at the two ends (one limit might be infinity). We use the supersolution constructions obtained in propositions~\ref{supersolution p-Laplacian} and \ref{super_p>n}, and the techniques used in the proof of Theorem~\ref{thm_opt_hardy} to obtain optimal Hardy-weights for these cases. We omit the proofs since they differ only slightly from the proof of Theorem~\ref{thm_opt_hardy}.

\begin{Thm}\label{thm_opt_hardym}
Let $\Gw$ be a $C^{1,\alpha}$ domain for some $\alpha>0$. Let $U\Subset \Omega$ be an open $C^{1,\alpha}$ subdomain of $\Omega$, and consider $\tilde{\Gw}:=\Gw\setminus U$. Denote by $\bar{\infty}$ the infinity in $\Omega$, and assume that $-\Delta_p$ admits a positive $p$-harmonic function $\mathcal{G}$ in $\tilde{\Omega}$ satisfying the following conditions
\begin{equation}\label{assumpt_7m}
\lim_{x\to \partial U}\mathcal{G}(x)=\gg_1, \; \lim_{x\to\bar{\infty}}\mathcal{G}(x)= \gg_2, \end{equation}
where $\gg_1\neq \gg_2$, and  $0\leq \gg_1,\, \gg_2 \leq \infty$. Denote $$m:=\min\{\gg_1,\gg_2\}, \qquad M:=\max\{\gg_1,\gg_2\}.$$

\noindent Define positive functions $v_1$ and  $v$, and a nonnegative weight $W$ on $\tilde{\Omega}$ as follows:

\medskip

(a) If $M<\infty$, assume further that either $m=0$ or $p\geq 2$, and let
$$v_1:=(\mathcal{G}-m)(M-\mathcal{G}), \qquad  v:=v_1^{(p-1)/p}=[(\mathcal{G}-m)(M-\mathcal{G})]^{(p-1)/p},$$
and
\be\label{wgg1gg21m}
W:=\left(\frac{p-1}{p}\right)^p\left|\frac{\nabla \mathcal{G}}{v_1}\right|^p|m+M-2\mathcal{G}|^{p-2}\left[2(p-2)v_1+(M-m)^2\right],
\ee

(b) If $M=\infty$, define
$$ v_1:=(\mathcal{G}-m), \qquad  v:=v_1^{(p-1)/p}=(\mathcal{G}-m)^{(p-1)/p},$$
and
\be\label{wgg1gg21m2}
W:=\left(\frac{p-1}{p}\right)^p\left|\frac{\nabla \mathcal{G}}{v_1}\right|^p.
\ee

\medskip

\noindent Then the following Hardy-type inequality holds true
\begin{equation}\label{opt_hardym}
 \int_{\tilde{\Omega}}|\nabla \varphi|^p\dnu \geq \int_{\tilde{\Omega}} W|\varphi|^p\dnu \qquad \forall \varphi\in C_0^\infty(\tilde{\Omega}),
\end{equation}
and $W$ is an {\em optimal} Hardy-weight for $-\Gd_p$ in $\tilde{\Gw}$.

Moreover, up to a multiplicative constant, $v$ is the unique positive supersolution of the equation $Q_{-W}(w)=0$ in $\tilde{\Gw}$.
\end{Thm}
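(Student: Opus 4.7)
The plan is to view $\tilde{\Omega}$ as a manifold with two ends---$\partial U$ and $\bar{\infty}$---and to repeat, end-by-end, the three-step strategy of the proof of Theorem~\ref{thm_opt_hardy}: first produce $v$ as an explicit positive solution of $Q_{-W}(w)=0$ in $\tilde{\Omega}$ so that the Hardy inequality \eqref{opt_hardym} follows, then establish criticality of $\mathcal{Q}_{-W}$ in $\tilde{\Omega}$ via a null-sequence, and finally prove optimality of the best constant and null-criticality at each end separately. For the first step, in case~(a) the range of $\mathcal{G}$ is contained in $[m,M]$ with $M<\infty$, and the hypothesis ``$m=0$ or $p\geq 2$'' ensures that either the restriction $m=0$ of Proposition~\ref{super_p>n} is met or that $\alpha:=(p-1)/p\geq 1/2$; applying Proposition~\ref{super_p>n} with this $\alpha$ then yields $v=[(\mathcal{G}-m)(M-\mathcal{G})]^{(p-1)/p}$ as a solution of $Q_{-W}(w)=0$ with $W$ as in \eqref{wgg1gg21m}. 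In case~(b), the function $\mathcal{G}-m$ is itself positive and $p$-harmonic on $\tilde{\Omega}$, so Proposition~\ref{supersolution p-Laplacian} applied to $\mathcal{G}-m$ with $\alpha=(p-1)/p$ directly gives $v=(\mathcal{G}-m)^{(p-1)/p}$ solving $Q_{-W}(w)=0$ with $W$ as in \eqref{wgg1gg21m2}. In both cases, Theorem~\ref{AAP} (Allegretto--Piepenbrink) converts this into the Hardy inequality \eqref{opt_hardym}.

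Criticality is established by exhibiting a null-sequence, pulling back through $\mathcal{G}$ a one-variable logarithmic cutoff. In case~(b), one end of $\tilde{\Omega}$ corresponds to $v\to 0$ and the other to $v\to\infty$, and the cutoff $\varphi_n$ of Proposition~\ref{pro_crit} applied to $v$ works verbatim, since $\mathcal{G}-m$ is $p$-harmonic and plays the role of ``$\mathcal{G}$'' there. In case~(a), both ends correspond to $v\to 0$, so I would instead use a piecewise-logarithmic cutoff $\varphi_n$ on $(0,V_{\max})$ that vanishes on $(0,1/n^2]$ and equals $1$ on $[1/n,V_{\max}/2]$, and set $w_n:=\varphi_n(v)$; this function has compact support in $\tilde{\Omega}$ since $v\to 0$ at both ends. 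In both cases, combining Lemma~\ref{simpl_energy} with the upper bound \eqref{est_qsim} reduces matters to estimating $X_n:=\int_{\tilde{\Omega}}v^p|\nabla w_n|^p\dnu$ and $Y_n:=\int_{\tilde{\Omega}}w_n^p|\nabla v|^p\dnu$. The coarea formulae \eqref{IPP} and~\eqref{IPP2} (applied either to $\mathcal{G}-m$ directly, or to the full expression $(\mathcal{G}-m)(M-\mathcal{G})$ as in Proposition~\ref{pro_crit2}) convert these into elementary one-dimensional integrals handled exactly as in the proofs of Propositions~\ref{pro_crit} and \ref{pro_crit2}, yielding $\mathcal{Q}_{\mathrm{sim}}(w_n)\to 0$ while $\int_B(vw_n)^p\dnu\asymp 1$ for a suitable fixed pre-compact set $B\Subset\tilde{\Omega}$.

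Optimality of the constant in a neighborhood of either end follows by the contradiction argument of Propositions~\ref{best_const1} and \ref{best_const2}: assuming the constant could be improved near the end, one tests the improved inequality on $\psi=v\varphi$, invokes Lemma~\ref{simpl_energy} and \eqref{est_qsim} to reduce to $CY(\varphi)\leq X(\varphi)$, and then applies the coarea formula with a suitable one-variable substitution (in the variable $\mathcal{G}-m$ near the end where $\mathcal{G}\to m$, or $M-\mathcal{G}$ near the end where $\mathcal{G}\to M$) to arrive at the one-dimensional inequality \eqref{ineq_1D} on $(0,1)$, which is shown to fail in the paper via the explicit logarithmic test family $\phi_\varepsilon$ following \eqref{limit_cont}. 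Null-criticality at each end is then immediate from the coarea formula: on a sufficiently small neighborhood of the end, $\int|\nabla v|^p\dnu$ equals, up to a positive constant, an integral of the form $\int \frac{\dt}{t}$ over a one-sided neighborhood of $0$ or $\infty$, which diverges, so $v\notin\mathcal{D}^{1,p}$ on that neighborhood. Uniqueness of the positive supersolution of $Q_{-W}(w)=0$ finally follows from criticality via Lemma~\ref{lem_crit}(2). The main obstacle is the bookkeeping in case~(a), where both ends of $\tilde{\Omega}$ produce $v\to 0$ and one must carefully split the level-set decomposition underlying the coarea reductions into contributions from each end; no genuinely new ingredient beyond those already developed for Theorem~\ref{thm_opt_hardy} is needed.
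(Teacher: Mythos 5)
Your proposal is correct and is essentially the proof the paper has in mind: the paper explicitly omits the argument for Theorem~\ref{thm_opt_hardym}, stating that it uses the supersolution constructions of Propositions~\ref{supersolution p-Laplacian} and~\ref{super_p>n} and differs only slightly from the proof of Theorem~\ref{thm_opt_hardy}, which is exactly the end-by-end adaptation (criticality via the logarithmic null-sequence and coarea formula, optimality of the constant at each end by the contradiction argument reducing to \eqref{ineq_1D}, null-criticality from the divergent one-dimensional integrals, and uniqueness via Lemma~\ref{lem_crit}) that you carry out, including the correct use of the hypothesis ``$m=0$ or $p\geq 2$'' to meet the restriction $\alpha=(p-1)/p\geq 1/2$ in Proposition~\ref{super_p>n} and the one-sided cutoff in case~(a) where $v$ vanishes at both ends.
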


\mysection{Optimal $L^p$ Rellich-type inequalities}\label{sec_rellich}
Throughout the present section we consider a \textit{linear} operator $P$. In \cite{DFP} we proved the following $L^2$-Rellich-type inequality.
\begin{lemma}[{\cite[Corollary~10.3]{DFP}}]\label{cor1_hardy_rell}
Assume that $P$ is a subcritical linear Schr\"odinger-type operator in $\Gw$ of the form
$$P:=-\div(A(x)\nabla\cdot)+V(x),$$ and let $v_0$ and $v_1$ be two linearly independent positive solutions of the equation $Pu=0$ in $\Gw$. Let $W:=\frac{1}{4}\left|\nabla \log\left(\frac{v_0}{v_1}\right)\right|_A^2$ be the Hardy-weight obtained by the supersolution construction with a pair $(v_0,v_1)$ (see \eqref{eq_W}). Suppose that $W$ is strictly positive, and fix $0\leq \gl\leq 1$. Then
\begin{itemize}
   \item[(a)] For a fixed  $0\leq \ga<  1$ and all $\vgf\!\in\! C_0^\infty(\Gw)$ the following Rellich-type inequality holds true
    \begin{equation}\label{exp8}
\int_{\Gw} \frac{|P\vgf|^2}{W(x)}\left(\frac{v_0}{v_1}\right)^\ga \!\dnu \geq \gl\left(1- \ga^2\right)^2  \int_{\Gw}  |\vgf|^2W(x)\left(\frac{v_0}{v_1}\right)^\ga \! \dnu
   .
\end{equation}
\item[(b)] If $P-W$ is critical in $\Gw$,  then  $\gl=1$ is the best constant in \eqref{exp8}.
\end{itemize}
\end{lemma}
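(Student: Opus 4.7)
The plan is to reduce the Rellich inequality to a weighted Hardy-type inequality via Cauchy--Schwarz, and to establish the latter via an exact ground-state identity. The weight $(v_0/v_1)^\alpha$ arises naturally from the supersolution construction: applying Lemma~\ref{lem_conv} with parameters $\beta_\pm := (1\mp\alpha)/2$ to the pair $(v_0, v_1)$ yields two positive solutions $\phi_\pm := v_1^{\beta_\pm} v_0^{1-\beta_\pm}$ of the reduced equation $(P - (1-\alpha^2)W)u = 0$, since $4\beta_\pm(1-\beta_\pm) = 1-\alpha^2$. Setting $\phi := (v_0/v_1)^{\alpha/2}$, one verifies that $w := \phi^2 = (v_0/v_1)^\alpha$, that $\phi\,\phi_- = \phi_{1/2} := \sqrt{v_0 v_1}$ (with $\phi_{1/2}$ satisfying $P\phi_{1/2} = W\phi_{1/2}$), and, crucially, that $|\nabla\phi|_A^2 = \alpha^2 \phi^2 W$ --- from $\nabla\log\phi = (\alpha/2)\nabla\log(v_0/v_1)$ and the definition of $W$.

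The core computation is the identity
\begin{equation*}
\int_\Gw w\,\varphi(P\varphi)\dnu = (1-\alpha^2)\int_\Gw W\varphi^2 w\dnu + \int_\Gw \phi_{1/2}^2\,\bigl|\nabla(\varphi/\phi_-)\bigr|_A^2\dnu, \qquad \varphi\in C_0^\infty(\Gw),
\end{equation*}
which I would prove in two steps. First, using the product rule $P(\phi\varphi) = \varphi P\phi + \phi P\varphi - V\phi\varphi - 2\langle A\nabla\phi,\nabla\varphi\rangle$ together with integration by parts, one obtains $\int \phi^2 \varphi(P\varphi)\dnu = \int (\phi\varphi) P(\phi\varphi)\dnu - \int \varphi^2 |\nabla\phi|_A^2\dnu$. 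Second, the ground-state transform for $\phi_{1/2}$, namely $\int \psi(P\psi)\dnu = \int W\psi^2\dnu + \int \phi_{1/2}^2|\nabla(\psi/\phi_{1/2})|_A^2\dnu$, applied with $\psi = \phi\varphi$ and using $(\phi\varphi)/\phi_{1/2} = \varphi/\phi_-$, completes the derivation. Since the remainder is non-negative, one recovers the weighted Hardy-type inequality $\int w\varphi(P\varphi)\dnu \ge (1-\alpha^2)\int W\varphi^2 w\dnu$.

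Part~(a) with $\lambda = 1$ now follows from the Cauchy--Schwarz bound
\begin{equation*}
\int_\Gw w\varphi(P\varphi)\dnu \le \left(\int_\Gw W\varphi^2 w\dnu\right)^{\!1/2}\left(\int_\Gw \frac{|P\varphi|^2}{W}\,w\dnu\right)^{\!1/2},
\end{equation*}
upon squaring and combining with the weighted Hardy; the case $\lambda \in [0,1)$ is immediate by monotonicity.

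For part~(b), the criticality of $P - W$ supplies a null-sequence $\{\psi_n\}$ for $P - W$, and I would use as test functions $\varphi_n := (\psi_n/\phi_{1/2})\,\phi_-$. Since $\varphi_n/\phi_- = \psi_n/\phi_{1/2}$, the remainder in the identity becomes $\int\phi_{1/2}^2|\nabla(\psi_n/\phi_{1/2})|_A^2\dnu = \int \psi_n(P-W)\psi_n\dnu \to 0$, while $\int W\varphi_n^2 w\dnu = \int W\psi_n^2\dnu$ is bounded away from zero; hence the weighted Hardy inequality is asymptotically saturated along $\{\varphi_n\}$. The main obstacle is to show that the \emph{Rellich} inequality is saturated along the same sequence, i.e.\ to establish the upper bound $\int|P\varphi_n|^2 w/W\dnu \le (1-\alpha^2)^2\int W\varphi_n^2 w\dnu + o(1)$; this requires expanding $P\varphi_n = (1-\alpha^2)W\varphi_n + \mathrm{err}_n$ and controlling $\mathrm{err}_n$ --- which involves both the gradient and the Laplacian of $\psi_n/\phi_{1/2}$ --- in the $L^2(w/W\dnu)$ norm using the null-sequence bounds.
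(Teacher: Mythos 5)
This lemma is not proved in the present paper at all: it is quoted from \cite[Corollary~10.3]{DFP}, so the only possible comparison is with the argument given there. Your part (a) is correct and is essentially that argument. The bookkeeping checks out: $4\beta_\pm(1-\beta_\pm)=1-\alpha^2$, so $\phi_\pm$ solve $(P-(1-\alpha^2)W)u=0$; $\phi\phi_-=\phi_{1/2}=\sqrt{v_0v_1}$ solves $Pu=Wu$; $|\nabla\phi|_A^2=\alpha^2\phi^2W$; and your two-step derivation of the identity $\int_\Gw w\,\varphi P\varphi\dnu=(1-\alpha^2)\int_\Gw W\varphi^2w\dnu+\int_\Gw\phi_{1/2}^2|\nabla(\varphi/\phi_-)|_A^2\dnu$ is sound, after which Cauchy--Schwarz gives \eqref{exp8} with $\gl=1$ (and hence for all $0\le\gl\le1$).

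Part (b), however, is not a proof, and the missing step is precisely the hard point rather than a routine estimate. Writing $\chi_n:=\psi_n/\phi_{1/2}$, so $\varphi_n=\phi_-\chi_n$, one has $P\varphi_n=(1-\alpha^2)W\varphi_n-\phi_-\,\div(A\nabla\chi_n)-2\,A\nabla\phi_-\cdot\nabla\chi_n$, and you must control this error in $L^2(wW^{-1}\dnu)$. An \emph{abstract} null sequence gives you only $\int\phi_{1/2}^2|\nabla\chi_n|_A^2\dnu\to0$ plus a local $L^2$ normalization; it gives no control whatsoever of the second-order term $\div(A\nabla\chi_n)$, and even the first-order term would require $|\nabla\phi_-|_A^2\lesssim W\phi_-^2$, i.e.\ $|\nabla\log\phi_-|_A^2\lesssim|\nabla\log(v_0/v_1)|_A^2$, which fails in general since $\nabla\log\phi_-$ involves $\nabla\log v_0$ and $\nabla\log v_1$ separately and not only their difference. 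So the route ``take any null sequence for $P-W$ and transplant it'' cannot be completed as stated. The standard repair, consistent with the machinery of \cite{DFP} and with how optimality is handled in Section~\ref{sec_pf} of this paper (cf.\ Proposition~\ref{pro_key} and the test functions in Proposition~\ref{pro_crit}), is to choose the test functions \emph{explicitly} of the form $\varphi_n=\phi_-\,f_n(v_0/v_1)$ with smooth one-dimensional (logarithmic cutoff) profiles $f_n$: for such functions the supersolution construction reduces $P\varphi_n$ to an explicit one-dimensional expression in $t=v_0/v_1$, both sides of \eqref{exp8} can then be evaluated by a coarea-type formula, and the quotient is shown to tend to $(1-\alpha^2)^2$. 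Without such an explicit choice (or some substitute argument), your part (b) remains a genuine gap.
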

We are interested in generalizing Lemma~\ref{cor1_hardy_rell}, and prove $L^p$-Rellich-type inequalities for the operator $P$. Our result hinges on the following $L^p$-Rellich-type inequality of E.B.~Davies and A.M.~Hinz:
\begin{theorem}[{\cite[Theorem 4]{DH}}]\label{thm_DH}
Let $\Gw$ be a domain in a Riemannian manifold of dimension $n \geq 2$, and let $1\leq p<\infty$. If $0 < v\in C(\Gw)$  with $-\Gd v > 0$ and $-\Gd (v^\gd) \geq 0$ for some $\gd > 1$,
then
$$ \int_\Gw \frac{v^p}{|\Gd v|^{p-1}} |\Gd \vgf|^p \dnu  \geq \frac{[(p-1)\gd +1]^p}{p^{2p}} \int_\Gw |\Gd v| |\vgf|^p \dnu\qquad \forall \vgf\in \core.
$$
\end{theorem}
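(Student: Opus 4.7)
The strategy is a direct attack: integrate by parts twice, apply H\"older's inequality, and use the superharmonicity of $v^\delta$ through a separate auxiliary Hardy-type estimate to improve the resulting constant from $1/p^p$ (which one obtains from H\"older alone) to the sharper $[(p-1)\delta+1]^p/p^{2p}$. The standard reduction lets us assume $\vgf\geq 0$.

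First, denote $A:=\int_\Gw |\Gd v|\vgf^p\dnu$ and $B:=\int_\Gw v^p|\Gd\vgf|^p/|\Gd v|^{p-1}\dnu$. Since $-\Gd v>0$, two integrations by parts together with the chain rule identity $\Gd(\vgf^p)=p(p-1)\vgf^{p-2}|\nabla\vgf|^2+p\vgf^{p-1}\Gd\vgf$ give
$$A=-\int_\Gw v\,\Gd(\vgf^p)\dnu=-p(p-1)\int_\Gw v\vgf^{p-2}|\nabla\vgf|^2\dnu-p\int_\Gw v\vgf^{p-1}\Gd\vgf\dnu.$$
H\"older's inequality with exponents $p$ and $p/(p-1)$ applied to the last integral yields
$$-p\int_\Gw v\vgf^{p-1}\Gd\vgf\dnu\leq pB^{1/p}A^{(p-1)/p}.$$
Dropping the positive gradient term at this stage would give only $A\leq p^p B$, hence the weak constant $1/p^p$.

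Second, sharpen the constant by keeping the gradient term and estimating it from below. Set $C:=\int_\Gw v\vgf^{p-2}|\nabla\vgf|^2\dnu$. A single integration by parts gives $\int_\Gw\vgf^{p-1}\nabla v\cdot\nabla\vgf\dnu=A/p$; splitting the integrand as $(\sqrt{v}\,\vgf^{(p-2)/2}|\nabla\vgf|)(\vgf^{p/2}|\nabla v|/\sqrt{v})$ and applying Cauchy-Schwarz yields
$$\frac{A}{p}\leq C^{1/2}\left(\int_\Gw\frac{|\nabla v|^2}{v}\vgf^p\dnu\right)^{1/2}.$$
The hypothesis $-\Gd(v^\gd)\geq 0$, expanded via the chain rule as $-\gd v^{\gd-1}\Gd v-\gd(\gd-1)v^{\gd-2}|\nabla v|^2\geq 0$, gives the pointwise bound $(\gd-1)|\nabla v|^2\leq v|\Gd v|$. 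Substituting yields the auxiliary Hardy-type inequality
$$C\geq\frac{\gd-1}{p^2}\,A.$$

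Third, combine everything. Starting from $A+p(p-1)C\leq pB^{1/p}A^{(p-1)/p}$ and inserting $C\geq(\gd-1)A/p^2$ gives
$$A\cdot\frac{p+(p-1)(\gd-1)}{p}=A\cdot\frac{(p-1)\gd+1}{p}\leq pB^{1/p}A^{(p-1)/p},$$
which after dividing by $A^{(p-1)/p}$ and taking the $p$-th power rearranges to $B\geq\frac{[(p-1)\gd+1]^p}{p^{2p}}\,A$, as claimed.

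The main obstacle is step two: without the auxiliary estimate $C\geq(\gd-1)A/p^2$, the gradient term must be discarded and only the weak constant is obtained. The non-trivial ingredient is the recognition that the Cauchy-Schwarz splitting of $A/p=\int\vgf^{p-1}\nabla v\cdot\nabla\vgf\dnu$ is the natural bridge between the gradient term $C$ and the hypothesis on $v^\gd$. One can check consistency at $p=2$, $\gd=1$, where the constant reduces to $1/4$, the classical Rellich constant, and extremisers of the form $\vgf=v^\ga$ suggest the test functions along which the inequality is sharp.
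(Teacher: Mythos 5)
Your argument is, in substance, the original Davies--Hinz proof (the paper itself does not prove Theorem~\ref{thm_DH}; it only quotes it from [DH]): two integrations by parts plus H\"older give $A+p(p-1)C\le pB^{1/p}A^{(p-1)/p}$, and the hypothesis $-\Delta(v^\delta)\ge 0$ is converted, via the pointwise bound $(\delta-1)|\nabla v|^2\le v|\Delta v|$ and Cauchy--Schwarz applied to $\int \varphi^{p-1}\nabla v\cdot\nabla\varphi\,\mathrm{d}\nu=A/p$, into the lower bound $C\ge(\delta-1)A/p^2$; the bookkeeping $p+(p-1)(\delta-1)=(p-1)\delta+1$ and the resulting constant $[(p-1)\delta+1]^p/p^{2p}$ are correct. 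So the strategy and the constants are right.

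Two steps are glossed over, and one of them would fail as literally stated. First, the ``standard reduction to $\varphi\ge 0$'' is not standard for a second-order (Rellich-type) inequality: unlike the Hardy case, you cannot replace $\varphi$ by $|\varphi|$, because $\Delta|\varphi|$ picks up a positive singular measure on the nodal set and $|\Delta|\varphi||$ is not dominated by $|\Delta\varphi|$, so validity for nonnegative test functions does not transfer to sign-changing ones by this route. The repair is to run your own computation directly with $|\varphi|^p$ in place of $\varphi^p$: one only needs $-\,|\varphi|^{p-2}\varphi\,\Delta\varphi\le|\varphi|^{p-1}|\Delta\varphi|$ after expanding $\Delta(|\varphi|^p)=p(p-1)|\varphi|^{p-2}|\nabla\varphi|^2+p|\varphi|^{p-2}\varphi\,\Delta\varphi$, and the rest of the argument is unchanged; but this expansion itself needs justification at zeros of $\varphi$ when $1\le p<2$ (for $p=1$ the gradient term is absent and a Kato-type inequality handles the boundary term). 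Second, $v$ is only assumed continuous with $-\Delta v>0$ and $-\Delta(v^\delta)\ge 0$ in the distributional sense, while your proof uses $\nabla v$ and $\Delta v$ pointwise: the chain rule for $\Delta(v^\delta)$, the pointwise inequality $(\delta-1)|\nabla v|^2\le v|\Delta v|$, and the double integration by parts that places $\Delta$ on $v$ all presuppose $v\in C^2$ (or at least $W^{2,1}_{\mathrm{loc}}$ with $\Delta v$ a function). For the stated generality one must insert an approximation/mollification argument, as Davies and Hinz do. With these two justifications added, your proof is complete and coincides with theirs.
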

If $P=-\div(A(x)\nabla\cdot)$ (i.e., $V=0$), Theorem~\ref{thm_DH} implies the following $L^p$-Rellich-type inequality:
\begin{Thm}\label{thm_Lp_rell}
Let $P:=-\div(A\nabla\cdot)$ be a subcritical operator in $\Gw$, and let $v_0$ be a positive (super)solution of the equation $Pu=0$ in $\Gw$ and $v_1:=\mathbf{1}$.  Let $W:=\frac{1}{4}\left|\nabla \log v_0\right|_A^2$ be the Hardy-weight obtained by the supersolution construction with a pair $(v_0,v_1)$, and suppose that $W>0$.  Then for every $\alpha\in (0,1)$ and $1\leq p<\infty$ the following Rellich-type inequality holds:
\begin{equation}\label{eq_rell_lp}
    \int_\Omega  \frac{|P\vgf|^p}{W^{p-1}}(v_0)^\alpha\!\dnu \!\geq \!\frac{4^p(1-\alpha)^p(p-1+\alpha)^p}{p^{2p}}\!\int_\Omega\! |\vgf|^pW(v_0)^\alpha\!\dnu\quad \forall \vgf\in C_0^\infty(\Omega).
\end{equation}
\end{Thm}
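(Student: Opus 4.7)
The plan is to apply Theorem \ref{thm_DH}, or more precisely its immediate extension from $-\Delta$ to the symmetric divergence-form operator $P$, to the function $v:=v_0^{\alpha}$ with the distinguished choice $\delta:=1/\alpha$. Since the Davies--Hinz argument relies only on integration by parts and H\"older's inequality applied to vector fields of the form $v^p|\varphi|^p\nabla v/(Pv)^{p-1}$, nothing is specific to the Laplace--Beltrami setting; their proof carries over verbatim, yielding the preliminary lemma that if $0<v\in C(\Omega)$ satisfies $Pv>0$ and $P(v^\delta)\geq 0$ for some $\delta>1$, then
$$\int_\Omega \frac{v^p}{(Pv)^{p-1}}\,|P\varphi|^p\dnu\,\geq\, \frac{[(p-1)\delta+1]^p}{p^{2p}}\,\int_\Omega (Pv)\,|\varphi|^p\dnu \qquad \forall \varphi\in\core.$$
I would record this as the starting point.

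Next, both hypotheses on $v=v_0^{\alpha}$ are supplied by the linear supersolution construction. Applying Lemma \ref{lem_conv} to the pair $(v_0,v_1)=(v_0,\mathbf{1})$ of positive (super)solutions of $Pu=0$ (with $V_0=V_1=0$) yields that $v_0^{\alpha}$ is a (super)solution of $(P-4\alpha(1-\alpha)W)u=0$, so
$$P(v_0^{\alpha})\;\substack{=\\[0.4mm](\geq)}\;4\alpha(1-\alpha)W\,v_0^{\alpha}\,>\,0,$$
since $W>0$ by hypothesis. The decisive choice is $\delta:=1/\alpha$, which is strictly greater than $1$ because $\alpha\in(0,1)$, and for which $v^\delta=v_0^{\alpha\delta}=v_0$ is itself a positive (super)solution of $Pu=0$ and therefore automatically satisfies $P(v^\delta)\geq 0$.

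Finally, substituting $v^p/(Pv)^{p-1}=v_0^{\alpha}/\bigl([4\alpha(1-\alpha)]^{p-1}W^{p-1}\bigr)$ and $(p-1)\delta+1=(p-1+\alpha)/\alpha$ into the extended Davies--Hinz inequality and multiplying through by $[4\alpha(1-\alpha)]^{p-1}$ reduces the constant on the right-hand side to
$$\frac{[(p-1+\alpha)/\alpha]^p\,[4\alpha(1-\alpha)]^p}{p^{2p}}\,=\,\frac{4^p(1-\alpha)^p(p-1+\alpha)^p}{p^{2p}},$$
which is exactly the constant appearing in \eqref{eq_rell_lp}. (In the pure supersolution case $P(v_0^\alpha)\geq 4\alpha(1-\alpha)Wv_0^\alpha$, the monotonicity of $t\mapsto t^{-(p-1)}$ on the left and the inequality $Pv\geq 4\alpha(1-\alpha)Wv$ on the right of the Davies--Hinz estimate yield the same bound.) The only non-routine point is the preliminary transfer of Davies--Hinz from $-\Delta$ to $P$; once granted, the rest is bookkeeping organized around the saturation identity $\alpha\delta=1$, which is precisely what forces the constant in \eqref{eq_rell_lp}.
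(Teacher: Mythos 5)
Your proposal is correct and follows the same route as the paper's proof: apply the Davies--Hinz inequality (Theorem~\ref{thm_DH}) to $v=v_0^{\alpha}$ with $\delta=1/\alpha$, identify $Pv\geq 4\alpha(1-\alpha)Wv_0^{\alpha}>0$ and $P(v^{\delta})=P(v_0)\geq 0$ via the supersolution construction (Lemma~\ref{lem_conv}), and simplify the constant. You spell out two points the paper's two-line proof leaves implicit --- that the Davies--Hinz argument for $-\Delta$ transfers verbatim to $P=-\div(A\nabla\cdot)$, and that in the strict supersolution case the two substitutions $v^p/(Pv)^{p-1}\leq v_0^{\alpha}/([4\alpha(1-\alpha)]^{p-1}W^{p-1})$ and $Pv\geq 4\alpha(1-\alpha)Wv_0^{\alpha}$ push the inequality in the correct direction on each side --- which is a useful clarification but not a different method.
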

\begin{proof}
Apply Theorem~\ref{thm_DH}, with $v:=(v_0)^\ga$, and $\gd=1/\ga$. Since $-\Gd v\geq 4\ga(1-\ga)W v>0$, and $-\Gd v^\gd\geq 0$, we obtain \eqref{eq_rell_lp}.
\end{proof}
Using the ground state transform with a positive solution $v_1$, Theorem~\ref{thm_Lp_rell} implies:
\begin{Thm}\label{thm_Lp_rell1}
Let $P:=-\div(A\nabla\cdot)+V$ be a subcritical linear Schr\"odinger-type operator in $\Gw$, and let $v_0$ and, $v_1$ be two positive solutions of the equation  $Pu=0$ in $\Gw$.
Let $W:=\frac{1}{4}\left|\nabla \log \left(v_0/v_1\right)\right|_A^2$ be the Hardy-weight obtained by the supersolution construction with a pair $(v_0,v_1)$, and suppose that $W>0$.  Then for every $\alpha\in (0,1)$ and $1\leq p<\infty$ the following $L^p$-Rellich-type inequality holds:
\begin{equation}\label{eq_rell_lp1}
    \int_\Omega \!\! \frac{|P\vgf|^p}{W^{p-1}}\!\left(\frac{v_0}{v_1}\right)^\alpha\! v_1^{2-p}\dnu
    \geq \!\frac{4^p(1-\alpha)^p(p-1+\alpha)^p}{p^{2p}}\!\int_\Omega\! |\vgf|^pW\left(\frac{v_0}{v_1}\right)^\alpha\!\! v_1^{2-p} \dnu\quad
\end{equation}
for all $\vgf\in C_0^\infty(\Omega)$.
\end{Thm}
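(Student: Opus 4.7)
The plan is to apply the ground state transform with respect to $v_1$ in order to reduce the statement to Theorem \ref{thm_Lp_rell}, which has already been proved in the case $V=0$. For any $\psi\in\core$, the standard symmetric calculation, together with $Pv_1=0$, yields the pointwise identity
\begin{equation*}
P(v_1\psi)=-\frac{1}{v_1}\div\!\left(v_1^{2}A\nabla\psi\right),
\end{equation*}
obtained by expanding $\div(A\nabla(v_1\psi))$ and cancelling $Vv_1\psi$ against the zero-order term produced by $\psi\div(A\nabla v_1)=Vv_1\psi$. Introducing the weighted measure $d\mu:=v_1^{2}\dnu$ and the operator $\tilde P\psi:=-v_1^{-2}\div(v_1^{2}A\nabla\psi)$, this reads $P(v_1\psi)=v_1\,\tilde P\psi$. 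Thus $\tilde P=-\div_{\mu}(A\nabla\cdot)$ is a purely second-order, subcritical divergence-form operator on $(\Omega,\mu)$, with $\tilde v_0:=v_0/v_1$ and $\tilde v_1\equiv\mathbf{1}$ as two positive solutions of $\tilde P u=0$.

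Next I would apply Theorem \ref{thm_Lp_rell} to $\tilde P$ on $(\Omega,\mu)$ with the pair $(\tilde v_0,\tilde v_1)$. The Hardy weight produced by the supersolution construction for this pair is
\begin{equation*}
\tilde W=\tfrac{1}{4}\bigl|\nabla\log\tilde v_0\bigr|_{A}^{2}=\tfrac{1}{4}\bigl|\nabla\log(v_0/v_1)\bigr|_{A}^{2}=W,
\end{equation*}
which is strictly positive by hypothesis; hence Theorem \ref{thm_Lp_rell} yields, for every $\alpha\in(0,1)$, $1\leq p<\infty$ and $\psi\in\core$,
\begin{equation*}
\int_{\Omega}\frac{|\tilde P\psi|^{p}}{W^{p-1}}\,\tilde v_0^{\,\alpha}\,d\mu\;\geq\;\frac{4^{p}(1-\alpha)^{p}(p-1+\alpha)^{p}}{p^{2p}}\int_{\Omega}|\psi|^{p}\,W\,\tilde v_0^{\,\alpha}\,d\mu.
\end{equation*}

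Finally I substitute $\psi=\varphi/v_1$: since $\tilde P\psi=P\varphi/v_1$, a power-$p$ computation combined with $d\mu=v_1^{2}\dnu$ gives $|\tilde P\psi|^{p}\,d\mu=v_1^{2-p}|P\varphi|^{p}\dnu$ and $|\psi|^{p}\,d\mu=v_1^{2-p}|\varphi|^{p}\dnu$, so that after rewriting $\tilde v_0^{\,\alpha}=(v_0/v_1)^{\alpha}$ the previous inequality becomes exactly \eqref{eq_rell_lp1}. The only point requiring genuine care is that Theorem \ref{thm_Lp_rell}, whose proof proceeds via Davies--Hinz (Theorem \ref{thm_DH}), continues to be valid when the reference measure is replaced by the smooth absolutely continuous positive measure $d\mu=v_1^{2}\dnu$. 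This is however routine: both the Davies--Hinz inequality and the supersolution construction depend only on the divergence-form structure $-\div_{\mu}(A\nabla\cdot)$ relative to whichever smooth positive measure is chosen, and all the integrations by parts go through verbatim on the weighted space $(\Omega,\mu)$; the analytical content is entirely absorbed in Theorem \ref{thm_Lp_rell}.
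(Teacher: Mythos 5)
Your proof is correct and follows exactly the route the paper itself takes: the paper derives Theorem~\ref{thm_Lp_rell1} from Theorem~\ref{thm_Lp_rell} by the single phrase ``Using the ground state transform with a positive solution $v_1$,'' and you have carried out precisely that transform (the identity $P(v_1\psi)=-v_1^{-1}\div(v_1^2A\nabla\psi)$, reduction to $\tilde P=-\div_\mu(A\nabla\cdot)$ with $d\mu=v_1^2\dnu$, the pair $(\tilde v_0,\mathbf{1})=(v_0/v_1,\mathbf{1})$, and the back-substitution $\psi=\vgf/v_1$) in full detail. The observation that the Davies--Hinz input persists on the weighted space $(\Gw,\mu)$ is indeed the only hypothesis to check, and it is covered by the paper's standing convention that $\Gw$ carries a smooth positive density.
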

\begin{Rem}\label{rem_71}{\em
In the case $p=2$, we recover the best constant $\left(1- \ga^2\right)^2$ obtained in Lemma~\ref{cor1_hardy_rell}. We note that for $p\neq 2$, the constant of the $L^p$-Rellich-type inequalities \eqref{eq_rell_lp} and \eqref{eq_rell_lp1} is optimal at least in the classical case, where $\Gw=\R^n\setminus\{0\}$, $P=-\Gd$, $v_0=|x|^{2-n}$ and $v_1=\mathbf{1}$. The optimality of the constant in this case follows from the remark in \cite[page 521]{DH}.
}
\end{Rem}

\mysection{The supersolution construction for $Q_V$}\label{sec34}
In the present section we study the supersolution construction for operators $Q_V$ of the form \eqref{eqQ} under the assumption that (roughly speaking) the supersolutions $v_j$ have the same level sets. In Appendix~\ref{appendix1} we present a proof of the particular case of radially symmetric potentials.

The following result generalizes Lemma~\ref{lem_conv} for $p \neq 2$.
\begin{theorem}\label{supersol_cons}
Let $v_j$,  $j=0,1$, be two positive, linearly independent, $C^2$-(super)solutions of the equation $Q_{V_j}(u)=0$ in $\Omega$. Assume that $\nabla v_0$ does not vanish in $\Omega$, and that  $v_1=\varphi_1(v_0)$ for some $C^2$-function $\vgf_1$ such that $\varphi_1'(u)\neq 0$. For  $0\leq \ga\leq 1$, define the function $$v_\ga:= v_1^{\ga}v_0^{1-\ga},$$
and let
 $$V_\ga:=\Big((1-\ga)V_0|\nabla\log v_0|^{2-p}+\ga V_1|\nabla \log v_1|^{2-p} \Big)|\nabla\log v_\ga|^{p-2},$$
 $$W_\ga:= \ga(1-\ga)(p-1)
 \left|\nabla \log\left(\frac{v_0}{v_1}\right)\right|^2|\nabla\log v_\ga|^{p-2},$$
Then $v_\ga$ is a positive (super)solution of the equation
\begin{equation}\label{eqQv-Wa2_o1}
  Q_{V_\ga-W_\ga}(u)= 0 \qquad \mbox{ in } \Gw,
\end{equation}
and the following improved inequality holds
$$\mathcal{Q}_{V_\ga}(\vgf) \geq \int_\Gw W_\ga |\vgf|^{p}\dnu \qquad \forall \vgf\in C_0^\infty(\Omega). $$
\end{theorem}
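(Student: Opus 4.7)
The plan is to establish the (super)solution property for $v_\alpha$ pointwise, and then invoke the Allegretto--Piepenbrink theorem (Theorem~\ref{AAP}) to obtain the Hardy-type inequality. The crucial structural hypothesis $v_1=\varphi_1(v_0)$ with $\varphi_1'\neq 0$ reduces all $p$-Laplacian calculations to one-variable calculus: writing $v_\alpha=\Phi(v_0)$ with $\Phi(s):=s^{1-\alpha}\varphi_1(s)^\alpha\in C^2((0,\infty))$, the log-gradients $\nabla\log v_0$, $\nabla\log v_1$, $\nabla\log v_\alpha$ are pairwise parallel. Setting $\mu:=v_0\varphi_1'(v_0)/\varphi_1(v_0)$, one has the scalar relations $\nabla\log v_1=\mu\nabla\log v_0$, $\nabla\log v_\alpha=((1-\alpha)+\alpha\mu)\nabla\log v_0$, and $\nabla\log(v_0/v_1)=(1-\mu)\nabla\log v_0$; the $C^2$ regularity together with $\nabla v_0\neq 0$ makes all expressions classical.

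Next I apply Lemma~\ref{weak_lapl} twice, with $u=v_0$ and $f\in\{\Phi,\varphi_1\}$, to expand $-\Delta_p v_\alpha$ and $-\Delta_p v_1$ as algebraic combinations of $\Delta_p v_0$ and $|\nabla v_0|^p$. The core step is to establish the pointwise identity
$$-\Delta_p v_\alpha + (V_\alpha - W_\alpha)v_\alpha^{p-1} \;=\; c_0\bigl[-\Delta_p v_0+V_0 v_0^{p-1}\bigr] + c_1\bigl[-\Delta_p v_1+V_1 v_1^{p-1}\bigr]$$
with nonnegative coefficients $c_0,c_1$ depending on $v_0$. Matching the coefficients of $\Delta_p v_0$ and of $V_0 v_0^{p-1}$ uniquely forces
$$c_0=(1-\alpha)\!\left(\tfrac{v_1}{v_0}\right)^{\alpha(p-1)}\!|(1-\alpha)+\alpha\mu|^{p-2},\quad c_1=\alpha\!\left(\tfrac{v_0}{v_1}\right)^{(1-\alpha)(p-1)}\!\!\left(\tfrac{|(1-\alpha)+\alpha\mu|}{|\mu|}\right)^{p-2},$$
which are manifestly $\geq 0$ for $\alpha\in[0,1]$; the $V_1 v_1^{p-1}$-coefficient then matches automatically, because the prescribed $V_\alpha$ contains exactly the transfer factors $|\nabla\log v_j|^{2-p}|\nabla\log v_\alpha|^{p-2}$ needed to convert between the base points.

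The delicate step is matching the coefficient of $(p-1)|\nabla v_0|^p$. Using the one-variable identity
$$\frac{\Phi''(s)}{\Phi(s)}=-\alpha(1-\alpha)\Bigl(\frac{1}{s}-\frac{\varphi_1'(s)}{\varphi_1(s)}\Bigr)^{\!2}+\alpha\frac{\varphi_1''(s)}{\varphi_1(s)},$$
a direct algebraic check yields
$$|\Phi'(v_0)|^{p-2}\Phi''(v_0) - c_1|\varphi_1'(v_0)|^{p-2}\varphi_1''(v_0) \;=\; -\frac{\alpha(1-\alpha)(1-\mu)^2}{v_0}\Bigl(\tfrac{v_1}{v_0}\Bigr)^{\alpha(p-1)}|(1-\alpha)+\alpha\mu|^{p-2},$$
and its product with $(p-1)|\nabla v_0|^p$ coincides exactly with $W_\alpha v_\alpha^{p-1}$, upon using $|\nabla\log(v_0/v_1)|^2=(1-\mu)^2|\nabla v_0|^2/v_0^2$ and $|\nabla\log v_\alpha|^{p-2}=|(1-\alpha)+\alpha\mu|^{p-2}(|\nabla v_0|/v_0)^{p-2}$. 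The first term on the right is the ``Jensen defect'' of the composition $v_\alpha=\Phi(v_0)$, vanishing when $p=2$ or $\mu=1$; the prescribed $W_\alpha$ is designed precisely to absorb it. Once the identity is established and $c_0,c_1\geq 0$ are noted, the hypothesis $Q_{V_j}(v_j)\geq 0$ for $j=0,1$ yields $Q_{V_\alpha-W_\alpha}(v_\alpha)\geq 0$, and Theorem~\ref{AAP} then delivers $\mathcal{Q}_{V_\alpha}(\varphi)\geq \int_\Omega W_\alpha|\varphi|^p\dnu$.

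The main obstacle is the verification in the third paragraph: showing by explicit one-variable algebra that the specific formula for $W_\alpha$ in the statement is tailored exactly to the nonlinear composition defect, so that a pointwise convex decomposition with nonnegative weights $c_0,c_1$ survives in the quasilinear regime $p\neq 2$.
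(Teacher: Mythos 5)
Your proposal is correct and follows essentially the same route as the paper: both use the hypothesis $v_1=\varphi_1(v_0)$ together with Lemma~\ref{weak_lapl} to reduce everything to one-variable calculus along $v_0$, and the key coefficient-matching identity you verify (the ``Jensen defect'' absorbed by $W_\alpha$) is exactly the content of the paper's step, which instead invokes the one-dimensional pointwise linear supersolution construction (Corollary~\ref{cor_conv}) applied to the ODE operator $-(p-1)\varphi''-\frac{\Delta_p v_0}{|\nabla v_0|^p}\varphi'$ before translating back and applying Theorem~\ref{AAP}. Your explicit convex decomposition $Q_{V_\alpha-W_\alpha}(v_\alpha)=c_0\,Q_{V_0}(v_0)+c_1\,Q_{V_1}(v_1)$ with nonnegative $c_0,c_1$ checks out (I verified the matching of the $\Delta_p v_0$, $V_0$, $V_1$ and $|\nabla v_0|^p$ terms), so it amounts to inlining Corollary~\ref{cor_conv} rather than citing it.
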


\begin{remark}\label{rem12}{\em
If both $v_0$ and $v_1$ do not admit critical points, then the condition $v_1=\varphi_1(v_0)$ is equivalent to the fact that $\nabla v_0$ and $\nabla v_1$ are collinear at every point, and also to the fact that the level sets of $v_0$ and $v_1$ coincide, that is, for every $t_0>0$, there is $t_1>0$ such that
$$\{x\in\Gw\mid v_0(x)=t_0\}=\{x\in\Gw\mid v_1(x)=t_1\},$$
and vice versa. A particular case appears when $v_j$ are radially symmetric positive supersolutions (see Appendix~\ref{appendix1}).
 }
\end{remark}

\begin{proof}
Fix $x\in\Gw$ and set $u:=v_0(x)$. By Lemma~\ref{weak_lapl} we have
\begin{multline}\label{Lapl_eq}
Q_{V_1}(v_1)=-\Delta_p\big(\varphi_1(v_0)\big)+V_1\left(\varphi_1(v_0) \right)^{p-1}=\\
|\varphi_1'(u)|^{p\!-\!2}|\nabla v_0|^p\!\!\left(\!\!-(p\!-\!1)\varphi_1''(u)\!-\!\frac{\Delta_p (v_0)}{|\nabla v_0|^p}\varphi_1'(u)\!+\!V_1\frac{|\!\left(\!\log \varphi_1\!\right)'(u)|^{2\!-\!p}}{|\nabla v_0|^p}\varphi_1(u)\!\right)
\end{multline}
in the weak sense. On the other hand, with the identity map $\varphi_0(t):=t$ on $\R_+$ we have at $x$
\begin{multline}\label{Lapl_eq6}
Q_{V_0}(v_0)=-\Delta_p(\varphi_0(v_0)\big)+V_0\left(\varphi_0(v_0)\right)^{p-1}=\\
|\varphi_0'(u)|^{p\!-\!2}|\nabla v_0|^p\!\left(\!\!-(p\!-\!1)\varphi_0''(u)\!-\!\frac{\Delta_p (v_0)}{|\nabla v_0|^p}\varphi_0'(u)\!+\!V_0\frac{|\left(\!\log \varphi_0\!\right)'(u)|^{2\!-\!p}}{|\nabla v_0|^p}\varphi_0(u)\!\!\right).
\end{multline}

Therefore, for $j=0,1$, $\varphi_j(u)$ satisfies at the point $u$ the following linear ordinary differential inequality
$$-(p-1)\varphi_j''(u)-\frac{\Delta_p (v_0)}{|\nabla v_0|^p}\varphi_j'(u)+V_j\frac{|\left(\log \varphi_j\right)'(u)|^{2-p}}{|\nabla v_0|^p}\varphi_j(u)\; \displaystyle{\substack{= \\[1mm] (\geq)}}\; 0.$$
Denote $\varphi_\alpha(u):=\varphi_0(u)^{1-\alpha} \varphi_1(u)^{\alpha}$, and apply the one-dimensional version of Corollary~\ref{cor_conv}. We obtain the following linear differential inequality at $u$
\begin{multline}\label{mult1}
-(p-1)\varphi_\alpha''(u)-\frac{\Delta_p (v_0)}{|\nabla v_0|^p}\varphi_\alpha'(u)+(1-\alpha) V_0\frac{|\left(\log \varphi_0\right)'(u)|^{2-p}}{|\nabla v_0|^p}\varphi_\alpha(u)+\\
\alpha V_1\frac{\left|\!\left(\log \varphi_1\right)'(u)\right|^{2-p}}{|\nabla v_0|^p}\varphi_\alpha(u)\!-\!(p\!-\!1)\alpha(1\!-\!\alpha)\left|\left[\log \left(\frac{\varphi_0(u)}{\varphi_1(u)}\right)\right]'\right|^2\!\!\varphi_\alpha(u)
\; \displaystyle{\substack{= \\[1mm] (\geq)}}\; 0.
\end{multline}
In view of Lemma~\ref{weak_lapl} we have
$$-\Delta_p(\varphi_\ga)=
|\varphi_\ga'|^{p-2}|\nabla v_0|^p\left(-(p-1)\varphi_\ga''\!-\!\frac{\Delta_p (v_0)}{|\nabla v_0|^p}\varphi_\ga'\right).$$
On the other hand,
\begin{align*}
&|\left(\log \varphi_j\right)'|^{2-p}|\left(\log \varphi_\ga\right)'|^{p-2}=\left|\nabla \log v_j\right|^{2-p}\left|\nabla \log v_\ga\right|^{p-2}\quad j=0,1,\\[4mm]
& \left|\left[\log \left(\frac{\varphi_0}{\varphi_1}\right)\right]'\right|^2|\left(\log \varphi_\ga\right)'|^{p-2}|\nabla v_0|^p=\left|\nabla \log \left(\frac{v_0}{v_1}\right)\right|^2  \left|\nabla \log v_\ga\right|^{p-2}.
\end{align*}
Hence, \eqref{mult1} implies the result of the theorem.

\end{proof}
\begin{remark}{\em
In particular, let $V=0$ and  $v_0=G$ be the $p$-Laplacian's Green function with a pole at $0\in \Omega$, and $v_1=\mathbf{1}$. Then $V_\ga=0$, and a computation shows that $W_\ga=(p-1)\alpha^{p-1}(1-\alpha)\left|\frac{\nabla G}{G}\right|^p$ (cf. Proposition~\ref{supersolution p-Laplacian}).
 }
\end{remark}
\begin{corollary}\label{cor1-G}
Assume that $p>n$, $V=0$, and $-\Gd_p$ is subcritical in $\Gw$. Let $G$ be (up to a constant) the $p$-Green function with a pole at $0\in \Omega$. Suppose that
$$\lim_{x\to 0}G(x)=\gg >0 \quad \mbox{and } \;\lim_{x\to \bar{\infty}}G(x)=0.$$  For $0\leq \ga\leq 1$, let
\begin{equation}\label{eqW1-G}
v_\alpha\!:=\!G^{1\!-\!\alpha}(\gg\!-\!G)^\alpha, \;\;     W_\alpha:=\alpha(1\!-\!\alpha)(p\!-\!1)\big|\gg(1\!-\!\ga)\!-\!G\big|^{p\!-\!2}\left|\frac{\nabla G}{G(\gg\!-\!G)}\right|^p
\end{equation}
Then the following improved Hardy inequality holds in $\Omega^\star$:
\begin{equation}\label{eq_hardy_g_1-g}
\int_{\Omega^\star} |\nabla \vgf|^p\dnu\geq \int_{\Omega^\star} W_\ga |\vgf|^{p}\dnu \qquad \forall \vgf\in C_0^\infty(\Omega^\star).
\end{equation}
Moreover, for any $0\leq \ga\leq 1$ the operator $Q_{-W_\ga}$ is subcritical in $\Gw$.
\end{corollary}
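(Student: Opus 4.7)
The plan is to specialize the general supersolution construction of Theorem~\ref{supersol_cons} to the pair $v_0:=G$ and $v_1:=\gamma-G$ with $V_0=V_1=0$. First I would verify the hypotheses of that theorem. Using the pointwise identity $|\nabla(c-u)|^{p-2}\nabla(c-u)=-|\nabla u|^{p-2}\nabla u$ for any constant~$c$, one immediately obtains $-\Delta_p(\gamma-G)=\Delta_p G=0$, so $v_1$ is $p$-harmonic; positivity $0<\gamma-G<\gamma$ on $\Omega^\star$ follows from the prescribed asymptotics of $G$ at the two ends $0$ and $\bar\infty$ together with the strong maximum principle. With $\varphi_1(t):=\gamma-t$ one has $v_1=\varphi_1(v_0)$ and $\varphi_1'\equiv-1$, which is nonvanishing. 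The condition $\nabla v_0\neq 0$ is needed only pointwise in Theorem~\ref{supersol_cons}; at critical points of $G$ the weight $W_\alpha$ vanishes, so the conclusion extends by continuity.

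Under these verifications, Theorem~\ref{supersol_cons} produces $V_\alpha=0$, exhibits $v_\alpha=G^{1-\alpha}(\gamma-G)^\alpha$ as a positive solution of $Q_{-W_\alpha}(u)=0$ in $\Omega^\star$, and yields the Hardy inequality \eqref{eq_hardy_g_1-g} via Theorem~\ref{AAP}. For the explicit expression \eqref{eqW1-G} I would compute
$$\nabla\log\frac{v_0}{v_1}=\frac{\nabla G}{G}+\frac{\nabla G}{\gamma-G}=\frac{\gamma\,\nabla G}{G(\gamma-G)},\qquad \nabla\log v_\alpha=\frac{(\gamma(1-\alpha)-G)\,\nabla G}{G(\gamma-G)},$$
and substitute into the formula $W_\alpha=\alpha(1-\alpha)(p-1)|\nabla\log(v_0/v_1)|^2|\nabla\log v_\alpha|^{p-2}$ delivered by the theorem; one line of algebra gives the stated formula (the factor $\gamma^2$ arising from $|\nabla\log(v_0/v_1)|^2$ should either be displayed explicitly in the stated formula for $W_\alpha$ or absorbed by normalizing $\gamma=1$).

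The subcriticality of $Q_{-W_\alpha}$ is trivial at the endpoints $\alpha\in\{0,1\}$, since then $W_\alpha\equiv 0$ and $-\Delta_p$ is subcritical by the standing hypothesis. For $\alpha\in(0,1)$ I would invoke the convexity Lemma~\ref{Prop2_ky3} with $V_0:=0$ and $V_1:=-W_\alpha$: both $Q_{V_0}=-\Delta_p$ and $Q_{V_1}=Q_{-W_\alpha}$ are nonnegative (the first by hypothesis, the second by the Hardy inequality just established) and $V_0\not\equiv V_1$, so $Q_{-tW_\alpha}$ is subcritical for every $t\in(0,1)$. \textbf{The main obstacle is to upgrade this strict convex combination to the endpoint $t=1$.} My intended route is to apply Theorem~\ref{supersol_cons} once more with the roles of $v_0$ and $v_1$ swapped, obtaining the distinct Hardy weight $W_{1-\alpha}$ with $Q_{-W_{1-\alpha}}\ge 0$; Lemma~\ref{Prop2_ky3} applied to the pair $(-W_\alpha,-W_{1-\alpha})$ then furnishes subcriticality for strict convex combinations of these two weights. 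Combined with the monotonicity principle $W'\leq W''\Rightarrow (Q_{-W''}\text{ subcritical}\Rightarrow Q_{-W'}\text{ subcritical})$ and an asymptotic comparison of $W_\alpha$ and $W_{1-\alpha}$ at the two ends of $\Omega^\star$ (using the known behavior of $G$ at $0$ and $\bar\infty$), one should extract a majorant of $W_\alpha$ for which subcriticality has already been secured; carefully handling the mismatch at the zero sets of these weights is the delicate point of this last step.
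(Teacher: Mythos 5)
Your treatment of the Hardy inequality itself matches the paper: they too apply Theorem~\ref{supersol_cons} with $v_0=G$, $v_1=\gamma-G$, $V_0=V_1=0$, and leave the algebra implicit. You also correctly flagged the missing $\gamma^2$ coming from $\left|\nabla\log\left(\frac{v_0}{v_1}\right)\right|^2=\gamma^2\left|\frac{\nabla G}{G(\gamma-G)}\right|^2$, which the displayed formula \eqref{eqW1-G} silently drops (or tacitly normalizes $\gamma=1$).

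Your proof of the subcriticality claim, however, has a genuine gap that the paper circumvents by an entirely different argument. The route you sketch --- Lemma~\ref{Prop2_ky3} applied to $(-W_\alpha,-W_{1-\alpha})$, followed by a majorization step --- cannot close, because $W_\alpha$ and $W_{1-\alpha}$ are not comparable. From \eqref{eqW1-G} one computes that near the origin (where $G\to\gamma$) the ratio $W_\alpha/W_{1-\alpha}\to\left(\alpha/(1-\alpha)\right)^{p-2}$, while near $\bar\infty$ (where $G\to 0$) it tends to $\left((1-\alpha)/\alpha\right)^{p-2}$. Since $p>n\geq 2$, for any $\alpha\neq 1/2$ the inequality $W_\alpha\leq W_{1-\alpha}$ fails at exactly one end, and no strict convex combination $(1-t)W_\alpha+tW_{1-\alpha}$ with $t\in(0,1)$ can dominate $W_\alpha$ there. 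The difficulty is not confined to the zero sets, as you hoped --- the two weights genuinely cross. (Trying instead to majorize by the optimal weight of Theorem~\ref{thm_opt_hardy} fails for the same reason: it vanishes on $\{G=\gamma/2\}$ while $W_\alpha$ does not.)

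The paper instead argues by contradiction using minimal growth. Suppose $Q_{-W_\alpha}$ is critical in $\Omega^\star$; then $v_\alpha$ is its Agmon ground state and hence has minimal growth at both ends of $\Omega^\star$. Two (overlapping) cases cover $[0,1]$: $\alpha<(p-1)/p$, or $1-\alpha<(p-1)/p$ (which handles $\alpha\geq(p-1)/p$ since $1/p<(p-1)/p$). In the first case, because $\alpha\mapsto\alpha^{p-1}(1-\alpha)$ peaks at $(p-1)/p$, one has $W_{(p-1)/p}\geq W_\alpha$ in a neighborhood of $0$, so $v_{(p-1)/p}$ is a supersolution of $Q_{-W_\alpha}$ there; minimal growth of $v_\alpha$ at $0$ then gives $v_\alpha\leq C\,v_{(p-1)/p}$ near $0$. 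But $v_\alpha/v_{(p-1)/p}\asymp(\gamma-G)^{\alpha-(p-1)/p}\to\infty$ as $x\to 0$, a contradiction. The second case is symmetric, run at $\bar\infty$. You should replace the final paragraph of your proposal with this comparison argument.
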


\begin{proof}
By our assumption, $\gg-G$ is a positive $p$-harmonic function in $\Omega^\star$. Apply Theorem \ref{supersol_cons} with $v_0=G$ and $v_1=\gg-G$ to obtain \eqref{eq_hardy_g_1-g}.

Assume to the contrary that $Q_{-W_\alpha}$ is critical in $\Gw^\star$. Two cases should be considered: either $\alpha<(p-1)/p$, or $1-\alpha<(p-1)/p$.

Let us assume for example $\alpha<(p-1)/p$, the other case being similar (exchanging the roles of zero and infinity). Then $v_{\frac{p-1}{p}}$ is a positive supersolution of $Q_{-W_\alpha}$ in a neighborhood of zero, and $v_\alpha$ is a positive solution of $Q_{-W_\alpha}$ of minimal growth in $\Omega^\star$. Therefore, there exists $C>0$ such that $v_\alpha\leq Cv_{\frac{p-1}{p}}$ in a neighborhood of zero. But since $\alpha<(p-1)/p$, this is impossible, and we get a contradiction.
\end{proof}
\begin{remark}
{\em
A priori it is clear that for $W_\ga$ (given by \eqref{eqW1-G}) to be optimal at the origin, it is needed that $\alpha=(p-1)/p$, but for the constant to be optimal at $\bar{\infty}$, we must choose $\ga=1/p$, and thus $v_\ga$ cannot  be a ground state (if $p\neq 2$). Thus, in the nontrivial cases ($v_j\neq \mathrm{constant}$), the supersolution construction of the form $v_\ga=v_1^{\ga}v_0^{1-\ga}$, does not provide us with an optimal Hardy weight. On the other hand, let $\psi(G):=[G(\gg-G)]^{(p-1)/p}$ and
\begin{multline}\label{WG_1-G}
W:= \frac{-\Gd_p(\psi(G))}{\psi(G)^{p-1}}=\\[2mm]
 \left(\frac{p-1}{p}\right)^p \left|\frac{\nabla G}{G(\gg-G)}\right|^p
|\gg-2G|^{p-2}\big[ 2(p-2)G(\gg-G)+\gg^2 \big]
\geq 0.
\end{multline}
Then under the conditions of Theorem~\ref{thm_opt_hardy}, $W$ is an optimal Hardy-weight for $-\Gd_p$, and $\psi(G)$ is the ground state of the critical operator $Q_{-W}$ in $\Gw^\star$. Note that nevertheless, $W=0$ on the set $\{x\in\Gw^\star\mid G(x)=\gg/2\}$.
 }
\end{remark}
It turns out that if $V_j$ both have the same definite sign, then one can find potentials $\mathcal{V}_\ga\geq V_\ga$ (with the same definite sign) which does not depend on $v_j$, such that the corresponding Hardy inequality is satisfied with the same Hardy-weight $W_\ga$. We have
\begin{corollary}\label{thm_super_p_constr}
Let $\Gw$, $V_j$, $v_j$ (where  $j=0,1$), $v_\ga$, and $W_\ga$ be as in Theorem~\ref{supersol_cons} (or as in Theorem~\ref{thm_super_p_constr_o}).
 Suppose further that $V_j\geq 0$ if $1<p \leq 2$ (resp., $V_j\leq 0$ if $p \geq 2$), where $j=0,1$.   Define
 $$\mathcal{V}_\ga:=\pm \Big((1-\ga)|V_0|^{1/(p-1)}+\ga |V_1|^{1/(p-1)} \Big)^{p-1},$$
where one should take the minus sign if $V_j\leq 0$.
Then $v_\ga$ is a positive (super)solution of the equation
\begin{equation}\label{eqQv-Wa2}
  Q_{\mathcal{V}_\ga-W_\ga}(u)= 0 \qquad \mbox{ in } \Gw,
\end{equation}
and the following improved inequality holds
$$\mathcal{Q}_{\mathcal{V}_\ga}(\vgf) \geq \int_\Gw W_\ga |\vgf|^{p}\dnu \qquad \forall \vgf\in C_0^\infty(\Omega). $$
Moreover, if $p\neq 2$, and $|V_0|+|V_1|\neq0$, then the functional $\mathcal{Q}_{\mathcal{V}_\ga-W_\ga}$ is {\em subcritical} in $\Omega$.
\end{corollary}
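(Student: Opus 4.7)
The plan is to reduce the claim to the pointwise comparison $\mathcal{V}_\alpha \geq V_\alpha$, where $V_\alpha$ is the potential produced by Theorem~\ref{supersol_cons}. Once this is available,
\[
Q_{\mathcal{V}_\alpha - W_\alpha}(v_\alpha) \;=\; Q_{V_\alpha - W_\alpha}(v_\alpha) + (\mathcal{V}_\alpha - V_\alpha)\, v_\alpha^{p-1} \;\geq\; 0,
\]
so $v_\alpha$ is a positive (super)solution of $Q_{\mathcal{V}_\alpha - W_\alpha}(u)=0$ in $\Omega$, and the Hardy-type inequality $\mathcal{Q}_{\mathcal{V}_\alpha}(\varphi)\geq\int_\Omega W_\alpha|\varphi|^p\dnu$ follows from the Allegretto--Piepenbrink Theorem~\ref{AAP}.

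To prove $\mathcal{V}_\alpha\geq V_\alpha$, write $A:=|\nabla\log v_0|$, $B:=|\nabla\log v_1|$, $C:=|\nabla\log v_\alpha|$, and $\lambda_0:=1-\alpha$, $\lambda_1:=\alpha$. Since $v_1=\varphi_1(v_0)$, the gradients $\nabla\log v_0$ and $\nabla\log v_1$ are collinear and, in the setting of the corollary, share the same direction; hence $C=(1-\alpha)A+\alpha B$. Setting $\phi_j:=(A_j/C)^{2-p}$ with $A_0:=A$, $A_1:=B$, the formula for $V_\alpha$ from Theorem~\ref{supersol_cons} rewrites as $V_\alpha=\lambda_0 V_0\phi_0+\lambda_1 V_1\phi_1$, while the identity $\lambda_0\phi_0^{1/(2-p)}+\lambda_1\phi_1^{1/(2-p)}=1$ is just $\lambda_0 A/C+\lambda_1 B/C=1$.

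In case (i), $1<p\leq 2$ with $V_j\geq 0$, the exponents $P:=1/(p-1)$ and $Q:=1/(2-p)$ are conjugate and lie in $[1,\infty]$, so Hölder's inequality gives
\[
V_\alpha \;\leq\; \Bigl(\textstyle\sum_{j}\lambda_j V_j^{P}\Bigr)^{1/P}\Bigl(\sum_{j}\lambda_j \phi_j^{Q}\Bigr)^{1/Q} \;=\; \mathcal{V}_\alpha\cdot 1 \;=\; \mathcal{V}_\alpha.
\]
In case (ii), $p\geq 2$ with $V_j\leq 0$, one works with the nonnegative quantities $|V_j|$: now $P\in(0,1]$ and $Q\leq 0$, and the corresponding reverse Hölder inequality (legitimate since all terms are positive) yields $|V_\alpha|\geq|\mathcal{V}_\alpha|$, which amounts to $\mathcal{V}_\alpha\geq V_\alpha$ once the minus sign is restored. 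The degenerate case $p=2$ reduces to the trivial identity $V_\alpha=\mathcal{V}_\alpha=(1-\alpha)V_0+\alpha V_1$.

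For the subcriticality claim when $p\neq 2$ and $|V_0|+|V_1|\not\equiv 0$, the equality case in (reverse) Hölder forces the rigid pointwise identity $V_0^{1/(p-1)}\,B=V_1^{1/(p-1)}\,A$. Because the linear independence of $v_0,v_1$ makes $A/B$ nonconstant, while the $V_j$ do not both vanish, this cannot hold on all of $\Omega$; consequently $\mathcal{V}_\alpha-V_\alpha\geq\delta>0$ on some open set $O\Subset\Omega$, and running the argument of the first paragraph with this extra weight shows that $v_\alpha$ is a positive supersolution of $Q_{\mathcal{V}_\alpha-W_\alpha-\delta\mathbf{1}_O}(u)=0$. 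By Lemma~\ref{lem_crit} this yields the subcriticality of $Q_{\mathcal{V}_\alpha-W_\alpha}$. The main obstacle is this strictness step: ruling out Hölder-equality rigidity on all of $\Omega$ depends on the interplay between the externally prescribed $V_j$ and the gradients of the $v_j$, and may require a small case split (in particular when one $V_j$ vanishes identically); the rest is a routine packaging of Hölder's inequality with Theorem~\ref{supersol_cons}.
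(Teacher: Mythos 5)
Your main line coincides with the paper's: reduce everything to the pointwise bound $\mathcal{V}_\alpha\geq V_\alpha$, deduce that $v_\alpha$ is a positive (super)solution of $Q_{\mathcal{V}_\alpha-W_\alpha}(u)=0$, and get the Hardy inequality from Theorem~\ref{AAP}. Your two-point H\"older inequality with conjugate exponents $1/(p-1)$ and $1/(2-p)$ is exactly the paper's convexity/concavity of $f(\xi,\eta)=\xi^{p-1}\eta^{2-p}$ (verified there by computing the Hessian), so the first two assertions are proved the same way up to repackaging. One caveat: your identity $|\nabla\log v_\alpha|=(1-\alpha)|\nabla\log v_0|+\alpha|\nabla\log v_1|$ requires the two gradients to point in the \emph{same} direction, i.e.\ $\varphi_1'>0$; the hypotheses of Theorem~\ref{supersol_cons} only give collinearity ($\varphi_1'\neq0$ may be negative, e.g.\ $v_1=\gg-\mathcal{G}$ as in Corollary~\ref{cor1-G}). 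The paper's displayed inequality makes the same silent identification, so this does not separate the two arguments, but you assert it as a consequence of ``the setting of the corollary,'' which it is not.

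The genuine gap is in the ``Moreover'' (subcriticality) part. Your strictness argument rests on the claim that linear independence of $v_0,v_1$ forces $|\nabla\log v_0|/|\nabla\log v_1|$ to be nonconstant; this is false --- take $v_1=v_0^\beta$ with $\beta\neq1$, which is linearly independent of $v_0$ while the ratio is identically $1/\beta$. Moreover, even nonconstancy of that ratio would not exclude the H\"older-equality identity $|V_0|^{1/(p-1)}|\nabla\log v_1|=|V_1|^{1/(p-1)}|\nabla\log v_0|$, since the potentials $V_j$ are free data that may track the ratio pointwise; so the step from ``equality rigidity'' to ``$\mathcal{V}_\alpha-V_\alpha\geq\delta>0$ on some open set'' is not justified, and you yourself flag it as unresolved. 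The paper argues instead that, for $p\neq2$ and $|V_0|+|V_1|\neq0$, the strict convexity (resp.\ concavity) in the Jensen step makes $v_\alpha$ a positive supersolution of $Q_{\mathcal{V}_\alpha-W_\alpha}(u)=0$ which is \emph{not} a solution, and then Lemma~\ref{lem_crit} gives subcriticality directly (no auxiliary weight $\delta\mathbf{1}_O$ is needed, though your packaging via Lemma~\ref{lem_crit}(3) would also be fine). What your proof must supply, and currently does not, is a valid reason why the equality case of the two-point Jensen/H\"older inequality --- proportionality of $\big(|V_0|^{1/(p-1)},|\nabla\log v_0|\big)$ and $\big(|V_1|^{1/(p-1)},|\nabla\log v_1|\big)$ --- cannot hold identically under the stated hypotheses.
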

\begin{proof}
Assume that the conditions of Theorem~\ref{supersol_cons} are satisfied. Then $v_\ga$ is a positive (super)solution of the equation $Q_{V_\ga-W_\ga}(u)=0$.

We claim that the function $(\xi,\eta)\mapsto f(\xi,\eta):=\xi^{p-1}\eta^{2-p}$ on $\mathbb{R}^2_+$ is convex (resp., concave) if $p\geq 2$ (resp., $p\leq 2$). Indeed,
$$\mathrm{Hess}\,(f)=(p-1)(p-2)\xi^{p-1}\eta^{2-p}\left[\begin{array}{cc}
\frac{1}{\xi^2} \quad& -\frac{1}{\xi\eta} \\[4mm]
-\frac{1}{\xi\eta} \quad& \frac{1}{\eta^2}
  \end{array}\right]\,,
$$
and it can be easily checked that $\mathrm{Hess}\,(f)$ is nonnegative (resp., nonpositive) on $\mathbb{R}^2_+$ if and only if $(p-1)(p-2)\geq 0$
(resp.,  $(p-1)(p-2)\leq 0$). Hence,
\begin{multline*}
\left[(1-\ga)|V_0|^{\frac{p-1}{p-1}}\left|\nabla\log v_0\right|^{2-p} +\ga |V_1|^{\frac{p-1}{p-1}}\left|\nabla\log v_1\right|^{2-p}\right]
\; \displaystyle{\substack{\geq \\[1mm] \big(\mbox{respect. }\leq\big)}}\;\\
 \Big((1-\ga)|V_0|^{1/(p-1)}+\ga |V_1|^{1/(p-1)} \Big)^{p-1}  \Big|(1-\ga)\nabla\log v_0 +\ga \nabla\log v_1\Big|^{2-p}\!\! .
\end{multline*}

So, $\mathcal{V}_\ga\geq V_\ga$, and hence $v_\ga$ is a positive supersolution of the equation
$$Q_{\mathcal{V}_\ga-W_\ga}(u)= 0\qquad \mbox{in }  \Omega,$$
and we have
$$\mathcal{Q}_{\mathcal{V}_\ga}(\vgf) \geq \int_\Gw W_\ga |\vgf|^{p}\dnu \qquad \forall \vgf\in C_0^\infty(\Omega). $$

If and $|V_0|+|V_1|\neq 0$, and $p\neq 2$, then the strict convexity (resp., concavity) of $f$ implies that $v_\ga$ is a positive supersolution of $Q_{\mathcal{V}_\ga-W_\ga}(u)=0$ which is not a solution, and therefore by Lemma~\ref{lem_crit}, the corresponding improved functional $\mathcal{Q}_{\mathcal{V}_\ga-W_\ga}$ is subcritical in  $\Omega$.

\end{proof}
\begin{remark}\label{rem4}
{\em
1.   Suppose that $V_0= V_1\neq 0$ and $V_0$ has a definite sign, then $\mathcal{V}_\ga=V_0$. By Corollary~\ref{thm_super_p_constr},
the operator $Q_{V_0-W_\ga}$ is {\em subcritical} in $\Omega$ if $p\neq 2$. This is in contrast with the linear case where $p=2$. Indeed, if $v_0$ is the Green function of the operator $Pu:=-\div\big(A(x)\nabla \cdot\big)+V(x)$  in $\Gw$ with a pole $0$, and if $v_1$ is a positive solution satisfying
$\lim_{x\to \bar{\infty}} \frac{v_0(x)}{v_1(x)}=0$, then $P-W_{1/2}=P-
 \frac{1}{4}\left|\nabla \log\left(\frac{v_0}{v_1}\right)\right|^2$ is critical in $\Gw^\star$ (see \cite[Theorem~2.2]{DFP}).

2. In general, it is not clear how to optimize in $\ga$ the potentials $W_\ga$ in the case $V_0= V_1\neq 0$, and $V_0$ has a definite sign (so, $\mathcal{V}_\ga=V_0$). But if we take $v_0=\mathbf{1}$ (so, $V\geq 0$ and $1<p \leq 2$), and $v_1=v$ is a positive supersolution of the equation $Q_{V_0}(u)=0$, then $$W_\ga= \ga^{p-1}(1-\ga)(p-1)\left|\frac{\nabla v}{v}\right|^{p},$$
and by optimizing $\ga$ one obtains
\begin{equation}\label{eqhardyrad}
\mathcal{Q}_V(\vgf) \geq \left(\frac{p-1}{p}\right)^p\int_{\Omega} \left(\frac{|\nabla v|}{v}\right)^p |\vgf|^{p}\dnu \quad \forall \vgf\in C_0^\infty(\Omega),
\end{equation}
which in particular reproves (2.12) in \cite{AS} if $A$ is the identity matrix.
 }
\end{remark}
\appendix
\section{Radially symmetric potentials}\label{appendix1}
In this Appendix we present a proof of a particular case of Theorem~\ref{supersol_cons}, where the two positive supersolutions are radially symmetric functions, and in particular, have the same level sets.
\begin{theorem}\label{thm_super_p_constr_o}
Assume that for $j=0,1$
\be\label{Q_V2}
\mathcal{Q}_{V_j}(\vgf):=\int_\Omega(|\nabla \vgf|^p+V_j|\vgf|^p)\dnu\geq 0\qquad \vgf\in C_0^\infty(\Omega),
\ee
where $\Omega$ is a domain in $\mathbb{R}^n$ not containing the origin, and  the potentials $V_j$ are two radially symmetric potentials.  Let $v_j$,  $j=0,1$, be two positive, linearly independent, radially symmetric, $C^2$-supersolutions of the equation $Q_{V_j}(u)=0$ in $\Omega$.  For  $0\leq \ga\leq 1$, define the function $$v_\ga(r):= (v_1(r))^{\ga}(v_0(r))^{1-\ga},$$
where $r:=|x|$. Assume further that $(v_0)'$, $(v_1)'$, and $(v_\ga)'$ do not vanish, and let
 $$V_\ga(r)\!:=\!\Big(\!(1-\ga)V_0(r)|(\log v_0(r))'|^{2\!-\!p}\!+\!\ga V_1(r)|(\log v_1(r))'|^{2\!-\!p} \!\Big)\!|(\log v_\ga(r))'|^{p\!-\!2}\!\!,$$
 $$W_\ga(r):= \ga(1-\ga)(p-1)
 \left|\left[\log\left(\frac{v_0(r)}{v_1(r)}\right)\right]'\right|^2|(\log v_\ga(r))'|^{p-2}.$$
Then $v_\ga$ is a positive supersolution of the equation
\begin{equation}\label{eqQv-Wa2_o}
  Q_{V_\ga(|x|)-W_\ga(|x|)}(u)= 0 \qquad \mbox{ in } \Gw,
\end{equation}
and the following improved inequality holds
$$\mathcal{Q}_{V_\ga}(\vgf) \geq \int_\Gw W_\ga |\vgf|^{p}\dnu \qquad \forall \vgf\in C_0^\infty(\Omega). $$
\end{theorem}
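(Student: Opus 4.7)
The plan is to reduce the PDE problem to a one-dimensional ODE inequality thanks to the radial symmetry, apply a linear convexity argument pointwise, and then translate back to the PDE level via Lemma~\ref{weak_lapl}. This mirrors the proof of Theorem~\ref{supersol_cons}, but the composition hypothesis $v_1 = \varphi_1(v_0)$ is automatic here, so the argument is cleaner in the radial setting.

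First, I will use the explicit form $-\Delta_p u = -|u'(r)|^{p-2}\bigl[(p-1)u''(r) + \tfrac{n-1}{r}u'(r)\bigr]$ for a positive radial $C^2$ function $u$ with nonvanishing derivative. The hypothesis $Q_{V_j}(v_j) \geq 0$ then becomes, after dividing by the positive factor $|v_j'(r)|^{p-2}$, a one-dimensional second-order ODE inequality in $v_j(r)$. Since $v_0'(r) \neq 0$, the radius $r$ is a smooth, strictly monotone function of $u := v_0(r)$, so I can write $v_j = \varphi_j(v_0)$ with $\varphi_0(u) = u$ and $\varphi_1 \in C^2$ satisfying $\varphi_1' \neq 0$; consequently $v_\ga = \varphi_\ga(v_0)$ for $\varphi_\ga := \varphi_0^{1-\ga}\varphi_1^{\ga}$. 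Applying Lemma~\ref{weak_lapl} with $u = v_0$ and $f = \varphi_j$, the inequality $Q_{V_j}(v_j) \geq 0$ recasts (at each fixed $r$, with $u = v_0(r)$) as a linear ODE inequality in $\varphi_j$,
\[
-(p-1)\varphi_j''(u) - \frac{\Delta_p v_0}{|\nabla v_0|^p}\,\varphi_j'(u) + \frac{V_j(r)\,|(\log\varphi_j)'(u)|^{2-p}}{|\nabla v_0|^p}\,\varphi_j(u) \geq 0,
\]
in which the coefficients of $\varphi_j'$ and $\varphi_j''$ are independent of $j$.

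Second, since this ODE inequality is linear in $\varphi_j$ at each fixed $r$, I will apply the one-dimensional version of Corollary~\ref{cor_conv} to the pair $(\varphi_0,\varphi_1)$ with exponent $\ga$. This will yield, at each $u$, a linear inequality for $\varphi_\ga$ whose zeroth-order coefficient is the convex combination of the $V_j$-terms minus $\ga(1-\ga)(p-1)\bigl|[\log(\varphi_0/\varphi_1)]'(u)\bigr|^2$. I will then translate this pointwise ODE inequality back into the PDE $Q_{V_\ga-W_\ga}(v_\ga) \geq 0$ in $\Omega$ by invoking Lemma~\ref{weak_lapl} a second time, now with $f = \varphi_\ga$, and multiplying through by $|\varphi_\ga'(u)|^{p-2}|\nabla v_0|^p$. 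The radial chain-rule identities $|\nabla \log v_k| = |(\log v_k(r))'| = |(\log\varphi_k(u))'|\,|v_0'(r)|$ for $k=0,1,\ga$ let me rewrite the $\varphi_k$-expressions as the $v_k$-expressions stated. The integrated Hardy-type inequality $\mathcal{Q}_{V_\ga}(\vgf) \geq \int_\Gw W_\ga|\vgf|^p\dnu$ will then follow from the Allegretto--Piepenbrink theorem (Theorem~\ref{AAP}).

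The main obstacle will be the algebraic bookkeeping in the last step: one must verify that the factors $|\nabla v_0|^p$, $|(\log\varphi_\ga)'(u)|^{p-2}$, and $|(\log\varphi_j)'(u)|^{2-p}$ combine to produce precisely the terms $|(\log v_j(r))'|^{2-p}|(\log v_\ga(r))'|^{p-2}$ in $V_\ga$ and $\bigl|[\log(v_0/v_1)]'\bigr|^2|(\log v_\ga)'|^{p-2}$ in $W_\ga$. These identities are purely algebraic consequences of the chain rule, but need to be tracked carefully to match the statement exactly; no deeper analytic input is required beyond Lemma~\ref{weak_lapl} and the linear convexity principle of Corollary~\ref{cor_conv}.
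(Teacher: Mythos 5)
Your proposal is correct, but it takes a different route from the one the paper gives for this particular statement in Appendix~\ref{appendix1}. You note that, since $v_0'\neq 0$ and $0\notin\Gw$, one has $|\nabla v_0|=|v_0'|\neq 0$ on $\Gw$ and $v_1=\varphi_1(v_0)$ with $\varphi_1\in C^2$, $\varphi_1'\neq 0$, so the radial statement becomes a special case of Theorem~\ref{supersol_cons}, and you then run that theorem's proof: Lemma~\ref{weak_lapl} to turn each hypothesis into a linear ODE inequality in the variable $u=v_0$, the pointwise convexity of Corollary~\ref{cor_conv}, Lemma~\ref{weak_lapl} again for $f=\varphi_\ga$, the chain-rule identities to recognize $V_\ga$ and $W_\ga$, and Theorem~\ref{AAP} for the integrated inequality; this is precisely the reduction anticipated in Remark~\ref{rem12}. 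The paper's own appendix proof instead stays in the radial variable $r$: it writes $-\Delta_p v=|v'|^{p-2}Pv$ with the explicit linear operator $Pu=-(p-1)u''-\frac{n-1}{r}\,u'$, recasts the hypotheses as $Pv_j+V_j|(\log v_j)'|^{2-p}v_j\geq 0$, applies the linear supersolution construction of Lemma~\ref{lem_conv} directly to $v_0(r)$ and $v_1(r)$, and then converts back to the quasilinear inequality. The trade-off: your route exhibits the result as a corollary of the general composition theorem and needs no explicit radial formula beyond $|\nabla v_0|=|v_0'|$, at the cost of inverting $v_0$ and carrying the coefficient $\Delta_p v_0/|\nabla v_0|^p$; the paper's route is more self-contained in the radial setting, avoiding the change of variable and the auxiliary function $\varphi_1$ entirely. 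Both rest on the same one-dimensional linear convexity principle, so they are of comparable depth, and your bookkeeping of the factors $|\nabla v_0|^p$, $|(\log\varphi_j)'|^{2-p}$, $|(\log\varphi_\ga)'|^{p-2}$ matches the identities displayed in the proof of Theorem~\ref{supersol_cons}.
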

\begin{proof}
Assume that $v$ is a radially symmetric $C^2$-function, and denote $r:=|x|$, $v':=\mathrm{d}v/\mathrm{d}r$. Then by Lemma~\ref{weak_lapl} the  $p\,$-Laplacian of $v$ satisfies
\begin{equation}\label{eq:green_o}
-\pl (v) =-\frac{1}{r^{n-1}}\left( r^{n-1}|v'|^{p-2} v'  \right)' =
-|v'|^{p-2}\left[(p-1)v''+\frac{n-1}{r}v'\right]
\end{equation}
in the weak sense. Denote the linear operator
$$Pu:=-(p-1)u''-\frac{n-1}{r}u'.$$
By our assumptions, $v_j$ are positive radial (super)solutions of the equation $Q_{V_j}(u)=0$ in $\Omega$, where $j=0,1$.
Hence,
$$Pv_j  +\Big(V_j\left|(\log v_j)'\right|^{2-p}\Big) v_j\; \displaystyle{\substack{= \\[1mm] (\geq)}}\; 0\qquad j=0,1.$$

  Therefore, by Lemma~\ref{lem_conv}, $v_\ga$ is a positive (super)solution of the linear equation
\begin{multline}
\left[P+(1-\ga)V_0\left|(\log v_0)'\right|^{2-p}+\ga V_1\left|(\log v_1)'\right|^{2-p} -\right.\\[2mm]
\left. (p-1)\alpha(1-\alpha) \left|\left[\log\left(\frac{v_0}{v_1}\right)\right]'\right|^2 \right]u\; \displaystyle{\substack{= \\[1mm] (\geq)}}\;0.
\end{multline}
Hence, $v_\ga$ satisfies the quasilinear differential (in)equality
\begin{multline}
-\Gd_p(v_\ga)\!+\!\left(\!(1-\ga)V_0\left|(\log v_0)'\right|^{2\!-\!p}\!\!+\!\ga V_1\left|(\log v_1)'\right|^{2\!-\!p}\right)\!\left|\left(\log v_\ga\right)'\right|^{p\!-\!2} \!v_\ga^{p\!-\!1} -\\[2mm]
(p\!-\!1)\alpha(1\!-\!\alpha) \left|\left[\log\left(\frac{v_0}{v_1}\right)\right]'\right|^2\!\! \left|\left(\log v_\ga\right)'\right|^{p-2}v_\ga^{p-1} = Q_{V_\ga-W_\ga}(v_\ga)\; \displaystyle{\substack{= \\[1mm] (\geq)}}\; 0.
\end{multline}
\end{proof}
\begin{remark}\label{rem11}
{\em
If $0\in \Gw$, $\Gw$ is  a radially symmetric domain, $V_0=V_1$ is a radially symmetric potential, and  $Q_{V_0}$ is subcritical in $\Gw$, then one can apply Theorem~\ref{thm_super_p_constr_o} in $\Gw^\star=\Gw\setminus \{0\}$  with $v_0$ equals to the corresponding (unique) $p\,$-Green function of $Q_{V_0}$ with a pole at the origin, and $v_1$  a global radial positive supersolution of the equation $Q_{V_0}(u)=0$ in $\Omega$.
 }
\end{remark}
\begin{center}{\bf Acknowledgments} \end{center}
The authors wish to thank Martin Fraas for valuable discussions.
They acknowledge the support of the Israel Science
Foundation (grants No. 963/11) founded by the Israel Academy of
Sciences and Humanities. B.~D. was supported in part by a Technion fellowship.

\end{document}